\newtheorem{theorem}{Theorem}[section]
\theoremstyle{definition}
\newtheorem{corollary}[theorem]{Corollary}
\newtheorem{lemma}[theorem]{Lemma}
\theoremstyle{remark}
\numberwithin{equation}{section}
\begin{document}

\title[Gradient estimates for a weighted parabolic equation]{Gradient estimates for a weighted parabolic equation under geometric flow }

\author{Shahroud Azami}
\address{Department of  Pure Mathematics, Faculty of Science,
Imam Khomeini International University,
Qazvin, Iran. \\
              Tel.: +98-28-33901321\\
              Fax: +98-28-33780083\\
             }

\email{azami@sci.ikiu.ac.ir}
%
%

\subjclass[2020]{53C21, 53E20, 35K55, 35B45}



\keywords{Gradient estimate, Harnack inequality, Parabolic equation, Geometric flow}
\begin{abstract}
Let $(M^{n},g,e^{-\phi}dv)$  be a weighted  Riemannian manifold  evolving by geometric flow $\frac{\partial g}{\partial t}=2h(t),\,\,\,\frac{\partial \phi}{\partial t}=\Delta \phi$.  In this paper, we obtain  a series of space-time  gradient estimates   for positive solutions of  a parabolic partial equation
$$(\Delta_{\phi}-\partial_{t})u(x,t)=q(x,t)u^{a+1}(x,t)+p(x,t)A(u(x,t))),\,\,\,\,(x,t)\in M\times[0,T]$$

on a weighted Riemannian manifold under geometric flow. By integrating the gradient estimates, we find the corresponding Harnack inequalities.
\end{abstract}
\maketitle

\section{Introduction}
 An $n$-dimensional smooth weighted Riemannain  manifold (or smooth metric measure space ) $(M^{n}, g, e^{-\phi}dv)$  is an  $n$-dimensional smooth  Riemannain  manifold $(M^{n},g)$ endowed with  a weighted volume element $e^{-\phi}dv$ such that $\phi\in C^{2}(M)$ and $dv$ is the volume element of $g$ on $M$. The weighted Laplacian (or Witten-Laplace operator) $\Delta_{\phi}$ is a symmetric diffusion operator and is defined by $\Delta-\nabla \phi.\nabla$.
In present paper, we will prove  Li-Yau type, local elliptic gradient estimate and another gradient estimates for positive  solution of  the parabolic equation
\begin{equation}\label{e1}
(\Delta_{\phi}-\partial_{t})u(x,t)=q(x,t)u^{a+1}(x,t)+p(x,t) A(u(x,t)),\,\,\,\,(x,t)\in M\times[0,T],
\end{equation}
 on  a weighted Riemannian manifold $(M^{n},g,e^{-\phi}dv)$ evolving by the geometric flow system
\begin{equation}\label{e2}
\frac{\partial }{\partial t}g(x,t)=2h(x,t),\,\,\,\,\frac{\partial }{\partial t}\phi=\Delta \phi
\end{equation}
where $(x,t)\in M\times[0,T]$, $p(x,t), q(x,t)$ are  functions on $M\times[0,T]$ of $C^{2}$ in $x$-variables and  $C^{1}$ in $t$-variable, $A(u)$ is a function  of $C^{2}$ in $u$, $a$ is a positive constant, and $h(x,t)$ is a symmetric $(0,2)$-tensor field on $(M,g(t),e^{-\phi}dv)$.  \\
Some examples of geometric flows are the Ricci flow \cite{RH} when $h=-Ric$ where $Ric$  is the Ricci tensor, Yamabe flow \cite{BC} when $h=-\frac{1}{2}Rg$ where $R$ is the scalar curvature, Ricci-Bourguignon flow \cite{GCL} when $h=-Ric+\rho R g$ where $\rho$ is constant, and  the extended Ricci flow \cite{BLI} when $h=-Ric+\alpha\nabla\phi\otimes\nabla\phi$ where  $\alpha(t)$   is a nonincreasing function  and $\phi$ is a smooth scalar function.\\

The equation (\ref{e1}) is the so called reaction-diffusion equation which can be found  in many mathematical models in physics, chemistry, and biology \cite{FR, JS}, where $qu^{a+1}+pA(u)$ and $\Delta_{\phi}$
are the reaction and the diffusion terms, respectively. For instance, when $\phi$ is  a constant function, $a=0$, and $A(u)=u\log u$, the nonlinear elliptic equation corresponding  to (\ref{e1}) is related to the gradient Ricci soliton. When $\phi$ is a constant function and  $A(u)=u^{b}$, the equation corresponding  to (\ref{e1}) is related to the Yamabe type equation.    When $\phi$ is a constant function and $A(u)=0$  then $(\Delta-\partial_{t})u=qu^{a+1}$ which is a simple ecological model for population dynamics.\\

Gradient and Harnack estimates are powerful tools and important techniques in heat kernel analysis, entropy theory, differential geometry, in particulary, in studying solution  of parabolic equations from geometry which it developed by P. Li and S.-T. Yau \cite{LY}. In fact, they proved the well-known Li-Yau estimate on positive solutions to the heat equation with potential on Riemannian manifold with a fixed  Riemannian metric and Ricci curvature bounded from below. Then, they derived Harnack inequalities by integrating the global gradient estimate along the  a space-time path which provides  a comparison between  heat  at two different points in space and at different times. After than, this method  plays powerful role  in study of heat equation, in particular, geometric flows. For instance,  R. S. Hamilton \cite{RSH1} proved  a harnack estimate for Ricci flow on Riemannain manifolds with weakly positive curvature operator which is used  in solving the Poincar{\'e} conjecture \cite{HDC, GP}.\\

In 1993, R. S. Hamilton \cite{RSH2} obtained  an elliptic type gradient estimate for positive solutions of the heat equations on compact manifolds which was known  as the Hamilton type gradient estimate. Then, for complete noncompact manifold, P. Souplet and Q. S. Zhang \cite{SZ} established an elliptic type gradient estimate for bounded solutions of the heat equation by adding  a logarithmic correction term. This is called the Souplet-Zhang type gradient estimate. Li-Yau  type, Hamilton type, and Souplet-Zhang type gradient estimates have been obtained for other nonlinear parabolic equations on  manifolds, for instance see \cite{LC, QCH, NTD, YLI, LMA, ERN, QR, JYWU} and the references therein.
On the other hand, many authors used similar techniques  to prove gradient estimates and Harnack  inequalities for positive solutions of parabolic equations under the geometric flow, see for example \cite{MBA, XCH, HGU, MIS, YLZ, JS, GZH}. \\

In 2014, X. Zhu and  Y. Li \cite{ZYL} derived Li-Yau estimates for  a parabolic equation of the type $(\Delta -q -\partial_{t})u=au(\log u)^{\alpha}$ in $M\times(0,\infty)$ with a fixed metric where $a,\alpha$ are constants  and $q\in C^{2}(M\times(0,\infty))$.  In \cite{QCH}, Q. Chen and G. Zhao studied  the equation $(\Delta-q-\partial_{t})u=A(u)$ with a convection terms on  a complete manifold with a fixed metric where $A(u)$  is a function  of $C^{2}$ in $u$. Then, in \cite{GZH}, G. Zhao obtained Li-Yau type and Hamilton type gradient estimates  of  equation $(\Delta-q-\partial_{t})u=A(u)$ on Riemannian manifold evolving by the geometric flow. In \cite{JYW},  J. Y. Wu gave  a local Li-Yau Type gradient estimate for the positive solutions to a nonlinear parabolic equation $\partial_{t} u=\Delta_{\phi} u-au\ln u- qu$ in $M\times [0,T]$, where  $a$ is  a real constant and $q\in C^{2}(M\times(0,\infty))$. Also, J. Y. Wu \cite{JYW} proved  local Hamilton type and Souplet-Zhang type gradient estimates for positive solutions to the equation $\partial_{t} u=\Delta_{\phi} u+au\ln u$ with $a\in\mathbb{R}$ on a smooth metric measure space $(M^{n},g,e^{-\phi}dv)$ with Bakry-{\'E}mery Ricci tensor is bounded from below. In 2019, F. Yang and L. Zhang \cite{FYZ} proved Li-Yau type, Hamilton type, Souplet-Zhang type, and  the fourth type gradient estimates for positive solutions of  a nonlinear parabolic equation $(\Delta_{\phi}-\partial_{t})u=pu+qu^{a+1}$ on smooth metric measure space with a fixed metric. \\

In this paper, we establish  some gradient estimates for bounded positive solution of (\ref{e1}) under the geometric flow (\ref{e2}), which are  richer than \cite{FYZ, GZH}.\\
In following we recall some basic definitions on an $n$-dimensional weighted Riemannian manifold $(M,g, e^{-\phi}dv)$.
The weighted Bochner formula for any smooth function $f$ is follow as
 \begin{equation}\label{e4}
\frac{1}{2}\Delta_{\phi}|\nabla f|^{2}=|{\rm Hess} f|^{2}+\langle \nabla\Delta_{\phi}f,\nabla f\rangle+Ric_{\phi}(\nabla f,\nabla f),
\end{equation}
where
 \begin{equation*}
Ric_{\phi}:=Ric+{\rm Hess}\phi
\end{equation*}
and it is called a Bakry-\'{E}mery tensor  (see \cite{DB}). For any integer $m>n$, an $(m-n)$-Bakry-\'{E}mery tensor (see \cite{DB1}) is defined by
 \begin{equation*}
Ric_{\phi}^{m-n}:=Ric+{\rm Hess}\phi-\frac{\nabla\phi\otimes\nabla\phi}{m-n}.
\end{equation*}
If $f=\ln u$ then $\Delta_{\phi}u=u(\Delta_{\phi}f+|\nabla f|)^{2})$   and $\partial_{t} u=u\partial_{t}f$. Therefore, by (\ref{e1}) the function $f$ satisfies
 \begin{equation}\label{e3}
(\Delta_{\phi}-\partial_{t})f=-|\nabla f|^{2}+qe^{af}+p\hat A(f),
\end{equation}
where $\hat A(f)=\frac{A(u)}{u}$.
Throughout the paper, we assume $u$ be  a positive  smooth solution to the general parabolic equation (\ref{e1}). We denote by $n$ the dimension  of the manifold $M$, and by $d(x,y,t)$ the geodesic distance between $x,y\in M$ under $g(t)$. In addition, for any fixed $x_{0}\in M$, $R>0$ we define the compact set
 \begin{equation*}
Q_{2R,T}:=\{(x,t):d(x,x_{0},t)\leq 2R, 0\leq t\leq T\}\subset M^{n}\times (-\infty,+\infty).
\end{equation*}
Let $f=\ln u$ and $\hat{A}(f)=\frac{A(u)}{u}$. Then
 \begin{equation*}
\hat{A}_{f}=A'(u)-\frac{A(u)}{u},\,\,\,\,\,\hat{A}_{ff}=uA''(u)-A'(u)+\frac{A(u)}{u}.
\end{equation*}
Moreover, for $u>0$ we define several nonnegative real constants as follows:
\begin{eqnarray*}
&&\lambda_{1}:=\mathop{\sup}\limits_{Q_{2R,T}}|\hat{A}|,\qquad\qquad\quad\quad\,\,\Lambda_{1}:=\mathop{\sup}\limits_{M\times [0,T]}|\hat{A}|,\\
&&\lambda_{2}:=\mathop{\sup}\limits_{Q_{2R,T}}|\hat{A}_{f}|,\qquad\qquad\quad\,\,\Lambda_{2}:=\mathop{\sup}\limits_{M\times [0,T]}|\hat{A}_{f}|,\\
&&\lambda_{3}:=\mathop{\sup}\limits_{Q_{2R,T}}|\hat{A}_{ff}|,\qquad\qquad\quad\,\,\Lambda_{3}:=\mathop{\sup}\limits_{M\times [0,T]}|\hat{A}_{ff}|,
\end{eqnarray*}
\begin{eqnarray*}
&&\gamma_{1}:=\mathop{\sup}\limits_{Q_{2R,T}}|p|,\qquad\qquad\quad\,\,\Gamma_{1}:=\mathop{\sup}\limits_{M\times [0,T]}|p|,\\
&&\gamma_{2}:=\mathop{\sup}\limits_{Q_{2R,T}}|\nabla p|,\qquad\qquad\,\,\Gamma_{2}:=\mathop{\sup}\limits_{M\times [0,T]}|\nabla p|,\\
&&\gamma_{3}:=\mathop{\sup}\limits_{Q_{2R,T}}|\Delta_{\phi}p|,\qquad\qquad\Gamma_{3}:=\mathop{\sup}\limits_{M\times [0,T]}|\Delta_{\phi}p|.
\end{eqnarray*}
and
\begin{eqnarray*}
&&\sigma_{1}:=\mathop{\sup}\limits_{Q_{2R,T}}|q|,\qquad\qquad\quad\,\,\Sigma_{1}:=\mathop{\sup}\limits_{M\times [0,T]}|q|,\\
&&\sigma_{2}:=\mathop{\sup}\limits_{Q_{2R,T}}|\nabla q|,\qquad\qquad\,\,\Sigma_{2}:=\mathop{\sup}\limits_{M\times [0,T]}|\nabla q|,\\
&&\sigma_{3}:=\mathop{\sup}\limits_{Q_{2R,T}}|\Delta_{\phi}q|,\qquad\qquad\Sigma_{3}:=\mathop{\sup}\limits_{M\times [0,T]}|\Delta_{\phi}q|.
\end{eqnarray*}
Also,
\begin{eqnarray*}
&&\theta_{1}:=\mathop{\sup}\limits_{Q_{2R,T}}|\nabla \phi|,\qquad\qquad\,\,\Theta_{1}:=\mathop{\sup}\limits_{M\times [0,T]}|\nabla \phi|,\\
&&\theta_{2}:=\mathop{\sup}\limits_{Q_{2R,T}}|\nabla\Delta\phi|,\qquad\qquad\Theta_{2}:=\mathop{\sup}\limits_{M\times [0,T]}|\nabla\Delta\phi|.
\end{eqnarray*}
The rest of this paper is organized as follows.\\
In Section 2, we give a Li-Yau type gradient estimate for positive solution  of (\ref{e1}) under the geometric flow (\ref{e2}). We firstly prove  a local and a global Li-Yau type gradient estimate on complete noncompact  weighted Riemannian manifold without boundary (see Theorem \ref{t1} and Corollary \ref{c1}) and as an immediate consequence  of the Corollary  \ref{c1}, by integrating the global gradient estimate in space-time we establish the corresponding Harnack inequality (see Corollary \ref{c2}). Then we consider that weighted manifold $M$ is closed   and we obtain global Li-Yau type gradient estimate and its corresponding Harnack inequality for  positive solution  of (\ref{e1}) on $M$ under geometric flow (\ref{e2}) (see Theorem \ref{t2} and Corollary \ref{c3}). In Sections 3 and 4, we prove    local and global Hamilton  type  and  Souplet-Zhang type gradient estimates on complete noncompact  weighted Riemannian manifold without boundary  for  positive solution  of (\ref{e1}) on $M$ under geometric flow (\ref{e2}), respectively (see Theorem \ref{t3},  Corollary \ref{c4}, Theorem \ref{t4},  Corollary \ref{c5} ). Finally, in Section 5, similar as  as  in  \cite{YLZ, FYZ}, we obtain  a local and a global another type gradient estimate and the corresponding Harnack inequality to global estimate  for positive solution of (\ref{e1}) under the geometric flow (\ref{e2})  on on complete noncompact  weighted Riemannian manifold without boundary (see Throrem \ref{t5}, Corollary \ref{c6} and Corollary \ref{c7}).

\section{Li-Yau type gradient estimates}
Firstly, we give a local space-time Li-Yau gradient estimate  for (\ref{e1})-(\ref{e2}) with conditions of  $Ric_{\phi}^{m-n}$ is lower bounded.
\begin{theorem}\label{t1}
Let $(M,g(0),e^{-\phi_{0}}dv )$ be a complete weighted Riemannian manifold, and let $g(t), \phi(t)$ evolve by (\ref{e2}) for $t\in [0,T]$. Given $x_{0}$ and $R>0$, let $u$ be a positive solution to (\ref{e1}) in $Q_{2R,T}$ such that $u^{a}\leq k$ for some positive constant $k$. Suppose that there exist constants $k_{1}, k_{2}, k_{3}, k_{4}$ such that
\begin{equation*}
Ric_{\phi}^{m-n}\geq -(m-1)k_{1}g,\qquad -k_{2}g\leq h\leq k_{3}g,\qquad |\nabla h|\leq k_{4},
\end{equation*}
on $Q_{2R,T}$. Then for any $\alpha>1$ and $\delta\in(0,1)$,  there exist positive constants $c_{0},c_{1},$ and $c_{2}$ such that
  \begin{equation}\label{10}
\frac{|\nabla u|^{2}}{u^{2}}-\alpha qu^{a}-\alpha p \frac{  A(u)}{u}-\alpha \frac{u_{t}}{u}\leq
 \frac{m\alpha^{2}}{2t(1-\epsilon\alpha)}+K
\end{equation}
on $Q_{2R,T}$,  where $\epsilon \in (0,\frac{1}{\alpha})$,
\begin{eqnarray*}
K&:=&
\frac{m\alpha^{2}}{2(1-\epsilon\alpha)}\Big(\frac{m\alpha^{2}c_{1}}{4(1-\epsilon\alpha)(\alpha-1)R^{2}}+\gamma_{1}\lambda_{2}+a\sigma_{1}k \\\nonumber&&+\frac{c_{0}}{R}(m-1)(\sqrt{k_{1}}+\frac{2}{R})+\frac{3c_{1}}{R^{2}}+c_{2}k_{2} \Big)
+\Big(\frac{m\alpha^{2}}{2(1-\epsilon\alpha)}E \Big)^{\frac{1}{2}},
\end{eqnarray*}
\begin{eqnarray*}\nonumber
E&:=&
\frac{3}{4}
\Big(\frac{2m\alpha^{2}}{(1-\epsilon\alpha)\delta (\alpha-1)^{2}}\Big)^{\frac{1}{3}} [\alpha(a+1)-1]^{\frac{4}{3}}k^{\frac{4}{3}}\sigma_{2}^{\frac{4}{3}}\\\nonumber&&
+ \frac{3}{4}
\Big(\frac{m\alpha^{2}}{2(1-\epsilon\alpha)(1-\delta) (\alpha-1)^{2}}\Big)^{\frac{1}{3}} \alpha^{\frac{4}{3}}\theta_{2}^{\frac{4}{3}}+\alpha k\sigma_{3}
\\\nonumber&&
+\frac{3}{4}t_{0}^{2}\Big(\frac{m\alpha^{2}}{2(1-\epsilon\alpha)(1-\delta) (\alpha-1)^{2}}\Big)^{\frac{1}{3}} \big( (\alpha-1)\lambda_{1} +\alpha\lambda_{2}\big)^{\frac{4}{3}}\gamma_{2}^{\frac{4}{3}}+\alpha t_{0}^{2}\lambda_{1} \gamma_{3}
\\&&
+ \frac{m\alpha^{2}}{4(1-\epsilon\alpha)(1-\delta)(\alpha-1)^{2}}C^{2}
\\\nonumber&&
+\frac{\alpha  n}{2\epsilon}(k_{2}+k_{3})^{2}
+\frac{9}{8}n\alpha^{2}k_{4}+2\alpha  k_{2}\epsilon \theta_{1}^{2},
\end{eqnarray*}
and
\begin{eqnarray*}
C&:=&(\alpha-1)( \gamma_{1}\lambda_{2}+2a\sigma_{1} k +2k_{3})+\alpha \gamma_{1} \lambda_{3}+2(1-\epsilon\alpha)(m-1)k_{1}\\&&+\frac{\alpha k_{2}}{2\epsilon}+\alpha a^{2}\sigma_{1} k+2k_{4}+a\sigma_{2}k.
\end{eqnarray*}
\end{theorem}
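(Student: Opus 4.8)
The plan is to run the Li--Yau argument, adapted to the weighted Laplacian $\Delta_{\phi}$ and to the evolving pair $(g(t),\phi(t))$. Put $f=\ln u$, so that by (\ref{e3})
\[
(\Delta_{\phi}-\partial_{t})f=-|\nabla f|^{2}+qe^{af}+p\hat A(f).
\]
Introduce the Li--Yau quantity
\[
w:=|\nabla f|^{2}-\alpha f_{t}-\alpha qe^{af}-\alpha p\hat A(f),
\]
and note that eliminating $f_{t}$ by means of (\ref{e3}) gives $w=-(\alpha-1)|\nabla f|^{2}-\alpha\Delta_{\phi}f$. Since $|\nabla f|^{2}=|\nabla u|^{2}/u^{2}$, $f_{t}=u_{t}/u$, $e^{af}=u^{a}$ and $\hat A(f)=A(u)/u$, the left-hand side of (\ref{10}) is exactly $w$, so it suffices to bound $tw$ from above on $Q_{2R,T}$.

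First I would establish a pointwise differential inequality for $w$. One applies $\Delta_{\phi}-\partial_{t}$ to each term of $w$: the weighted Bochner formula (\ref{e4}) controls $\Delta_{\phi}|\nabla f|^{2}$, while the time derivatives must account for (\ref{e2}), so that $\partial_{t}|\nabla f|^{2}=-2h(\nabla f,\nabla f)+2\langle\nabla f,\nabla f_{t}\rangle$ and $\partial_{t}(\Delta_{\phi}f)$ produces $-2\langle h,\mathrm{Hess}\,f\rangle$ together with a first-order term in $\nabla h$ and, through $\partial_{t}\phi=\Delta\phi$, the term $\langle\nabla\Delta\phi,\nabla f\rangle$. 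The algebraic heart of this step is the refined Bochner inequality
\[
|\mathrm{Hess}\,f|^{2}+\frac{(\nabla\phi\cdot\nabla f)^{2}}{m-n}\ \ge\ \frac{(\Delta_{\phi}f)^{2}}{m},
\]
which, combined with $Ric_{\phi}^{m-n}\ge-(m-1)k_{1}g$, turns the curvature part of Bochner into $\frac{1}{m}(\Delta_{\phi}f)^{2}-(m-1)k_{1}|\nabla f|^{2}$; this is what forces the constant $m$ and the hypothesis on $Ric_{\phi}^{m-n}$. Writing $\Delta_{\phi}f=-\tfrac1\alpha\big(w+(\alpha-1)|\nabla f|^{2}\big)$ and rearranging, one obtains, at any point where $w\ge0$, an inequality of the schematic form
\[
(\Delta_{\phi}-\partial_{t})w\ \ge\ -2\langle\nabla f,\nabla w\rangle+\frac{2(1-\epsilon\alpha)}{m\alpha^{2}}\,w^{2}-\mathcal E,
\]
where $\mathcal E$ collects all error terms built from $h,\nabla h,\nabla\phi$, from $q,\nabla q,\Delta_{\phi}q$ and $p,\nabla p,\Delta_{\phi}p$, from $\hat A,\hat A_{f},\hat A_{ff}$, and from $\nabla\Delta\phi$. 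Each of these is bounded by Cauchy--Schwarz and repeated use of Young's inequality $ab\le\eta a^{r}+C(\eta)b^{r'}$, with exponents $(r,r')=(4,\tfrac{4}{3})$ for the potential-gradient contributions, which is the source of the $\tfrac{4}{3}$-powers in $E$; the parameter $\delta\in(0,1)$ records how these errors are split into two groups, and $\epsilon\in(0,1/\alpha)$ is the fraction of the good quadratic term surrendered to the $h$- and $\nabla\phi$-terms.

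Next I would localize. Fix a smooth cutoff $\psi=\psi(d(x,x_{0},t))$ with $\psi\equiv1$ on $Q_{R,T}$, $\mathrm{supp}\,\psi\subset Q_{2R,T}$, $-c_{1}\psi^{1/2}/R\le\psi'\le0$ and $|\psi''|\le c_{1}/R^{2}$. Along (\ref{e2}), $\partial_{t}\psi$ is controlled through the distortion of $d(\cdot,x_{0},t)$ under $-k_{2}g\le h\le k_{3}g$, and $\Delta_{\phi}\psi$ through the weighted Laplacian comparison theorem under $Ric_{\phi}^{m-n}\ge-(m-1)k_{1}g$ together with $|\nabla h|\le k_{4}$; this is where the terms $\frac{c_{0}}{R}(m-1)(\sqrt{k_{1}}+2/R)$, $\frac{3c_{1}}{R^{2}}$ and $c_{2}k_{2}$ in $K$ originate. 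Set $G:=t\psi w$ and let $G$ attain its maximum over $Q_{2R,T}$ at $(x_{*},t_{*})$; if $G(x_{*},t_{*})\le0$ there is nothing to prove, so assume $G(x_{*},t_{*})>0$, whence $t_{*}>0$, $w(x_{*},t_{*})>0$, and at that point $\nabla G=0$ (so $\nabla w=-w\,\nabla\psi/\psi$), $\Delta_{\phi}G\le0$, $\partial_{t}G\ge0$. Substituting the $w$-inequality into $0\ge(\Delta_{\phi}-\partial_{t})G$---the factor $t$ in $G$ supplying, via $\partial_{t}G$, the linear term $-G/t$ that balances the quadratic---multiplying through by $t_{*}\psi$, and applying Young's inequality once more to the cross terms $t\psi w\langle\nabla f,\nabla\psi\rangle$ and $tw\,\partial_{t}\psi$, one arrives at a quadratic inequality $\mathcal A\,G_{*}^{2}\le\mathcal B\,G_{*}+\mathcal C$ for $G_{*}:=G(x_{*},t_{*})$, whose coefficients are explicit in $R,\alpha,\epsilon,\delta,k$ and the geometric data $k_{i},\gamma_{i},\sigma_{i},\lambda_{i},\theta_{i}$. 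Solving it, dividing by $t$ and using $\psi\equiv1$ on $Q_{R,T}$ yields (\ref{10}) with the stated $K$, $E$, $C$; letting $R\to\infty$ with the bounds taken globally gives the companion global estimate of Corollary \ref{c1}.

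The main obstacle is the first step: carrying out the weighted-Bochner computation for $w$ in the simultaneous presence of the moving metric $\partial_{t}g=2h$, the moving weight $\partial_{t}\phi=\Delta\phi$, and the two reaction terms $qe^{af}$ and $p\hat A(f)$, and then organizing the resulting abundance of error terms so that, after the cutoff and maximum-principle steps, they collapse precisely into the quantities $E$, $C$ and $K$ with the indicated powers and coefficients. How one chooses to split and weight those terms---the roles of $\delta$ and $\epsilon$, and the $4/3$-exponent applications of Young's inequality---is exactly what is being optimized in the statement; the maximum-principle step itself, while computationally heavy, is routine.
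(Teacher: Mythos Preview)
Your proposal is correct and follows essentially the same route as the paper: define $F=tw$, establish the differential inequality of Lemma~\ref{l2} via the weighted Bochner formula, the evolution formulas of Lemma~\ref{l1}, and the inequality $|\mathrm{Hess}\,f|^{2}\ge(\Delta_{\phi}f)^{2}/m-\langle\nabla f,\nabla\phi\rangle^{2}/(m-n)$, then localize with a cutoff $\eta$, apply the weighted Laplacian comparison and the maximum principle to $G=\eta F$, and reduce to a quadratic in $G$ via the same Young-inequality splittings (with the $\mu=|\nabla f|^{2}/F$ parametrization at the maximum point). The only cosmetic difference is that the paper carries the factor $t$ inside $F$ from the outset rather than introducing it at the localization step.
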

For prove our results, we need the following lemmas. From \cite{BC1} we have
\begin{lemma}\label{l1}
Let the metric evolves by (\ref{e2}). Then for any smooth function $f$, we have
 \begin{equation*}
\frac{\partial }{\partial t}|\nabla f|^{2}=-2h(\nabla f,\nabla f)+2\langle \nabla f,\nabla f_{t}\rangle
\end{equation*}
and
 \begin{eqnarray*}
(\Delta_{\phi} f)_{t}&=&\Delta_{\phi}f_{t}-2\langle h, {\rm Hess} f\rangle-2\langle {\rm div }h-\frac{1}{2}\nabla({\rm tr}_{g} h), \nabla f\rangle\\&&+2h(\nabla \phi,\nabla f)-\langle \nabla f,\nabla \phi_{t}\rangle\\
&=&\Delta_{\phi}f_{t}-2\langle h, {\rm Hess} f\rangle-2\langle {\rm div }h-\frac{1}{2}\nabla({\rm tr}_{g} h), \nabla f\rangle\\&&+2h(\nabla \phi,\nabla f)-\langle \nabla f,\nabla \Delta\phi\rangle
\end{eqnarray*}
where ${\rm div }h$ is the divergence of $h$.
\end{lemma}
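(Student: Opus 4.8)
\textbf{Proof proposal for Lemma \ref{l1}.}

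The plan is to compute the two evolution equations directly from the flow system \eqref{e2}, using standard variation-of-geometry formulas together with careful bookkeeping of the weight $\phi$. For the first identity, I would write $|\nabla f|^{2} = g^{ij}\partial_i f\,\partial_j f$ and differentiate in $t$. Only the inverse metric depends on $t$ (the function $f$ is arbitrary but we track $f_t$ as an independent quantity, i.e. we differentiate the composition so that $\partial_t(\partial_i f)=\partial_i f_t$). Since $\partial_t g_{ij}=2h_{ij}$ gives $\partial_t g^{ij}=-2h^{ij}$, we get $\partial_t|\nabla f|^2 = -2h^{ij}\partial_i f\,\partial_j f + 2g^{ij}\partial_i f_t\,\partial_j f = -2h(\nabla f,\nabla f)+2\langle\nabla f,\nabla f_t\rangle$, which is the first claim. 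This step is essentially immediate.

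For the second identity I would start from $\Delta_\phi f = \Delta f - \langle\nabla\phi,\nabla f\rangle$ and differentiate each piece in $t$. For $\partial_t(\Delta f)$ I would invoke the well-known formula for the variation of the Laplace–Beltrami operator under $\partial_t g=2h$: in local coordinates $\Delta f = \tfrac{1}{\sqrt{\det g}}\partial_i(\sqrt{\det g}\,g^{ij}\partial_j f)$, and differentiating yields $\partial_t(\Delta f) = \Delta f_t - 2\langle h,\mathrm{Hess}\,f\rangle - 2\langle \mathrm{div}\,h - \tfrac12\nabla(\mathrm{tr}_g h),\nabla f\rangle$, using $\partial_t\log\sqrt{\det g}=\mathrm{tr}_g h$ and the contracted second Bianchi–type identity for the divergence term. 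For the weighted correction term, $\partial_t\langle\nabla\phi,\nabla f\rangle = \partial_t(g^{ij}\partial_i\phi\,\partial_j f) = -2h(\nabla\phi,\nabla f) + \langle\nabla\phi_t,\nabla f\rangle + \langle\nabla\phi,\nabla f_t\rangle$. Subtracting, and recognizing $\Delta f_t - \langle\nabla\phi,\nabla f_t\rangle = \Delta_\phi f_t$, we obtain
\[
(\Delta_\phi f)_t = \Delta_\phi f_t - 2\langle h,\mathrm{Hess}\,f\rangle - 2\langle \mathrm{div}\,h - \tfrac12\nabla(\mathrm{tr}_g h),\nabla f\rangle + 2h(\nabla\phi,\nabla f) - \langle\nabla f,\nabla\phi_t\rangle,
\]
which is the first displayed form. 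The second form follows by substituting the flow equation $\phi_t = \Delta\phi$ from \eqref{e2}, so that $\nabla\phi_t = \nabla\Delta\phi$.

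The main technical point to be careful about is the variation of the Laplacian: one must correctly combine the $\partial_t g^{ij}$ contribution (producing the $-2\langle h,\mathrm{Hess}\,f\rangle$ term once one symmetrizes and absorbs a piece via the Christoffel-symbol variation) with the $\partial_t\Gamma^k_{ij}$ contribution (producing the divergence term involving $\mathrm{div}\,h - \tfrac12\nabla\mathrm{tr}_g h$), and to track the interplay with the volume-factor derivative $\mathrm{tr}_g h$. This is the classical computation underlying, e.g., the evolution of the Laplacian under Ricci flow, and since the result is quoted from \cite{BC1}, I would simply cite that reference for the Laplace–Beltrami part and supply only the short additional computation handling the $\langle\nabla\phi,\nabla f\rangle$ correction, which is the genuinely new ingredient here. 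No obstacle is expected beyond this bookkeeping; everything else is routine differentiation of tensor contractions.
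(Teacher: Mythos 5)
Your computation is correct, and it matches the paper's treatment: the paper gives no proof of this lemma at all, simply quoting it from \cite{BC1}, while you reproduce the standard variation formula for the Laplacian under $\partial_t g=2h$ and correctly supply the short extra computation for the weighted term $\langle\nabla\phi,\nabla f\rangle$, which together with $\phi_t=\Delta\phi$ yields both displayed forms. One minor remark: no contracted Bianchi-type identity is needed for the divergence term (that only becomes relevant when $h=-Ric$, where it makes the term vanish); for general $h$ the term $-2\langle \mathrm{div}\,h-\tfrac12\nabla(\mathrm{tr}_g h),\nabla f\rangle$ falls out directly from the variation of the Christoffel symbols, exactly as in the formula you quote.
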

\begin{lemma}\label{l2}
Let  $(M^{n},g,e^{-\phi}dv)$  be a weighted  Riemannian manifold, $g(t)$ evolves by (\ref{e2}) for $t\in[0,T]$ satisfies the hypotheses of Theorem \ref{t1}. If $f=\ln u$ and $F:=t\Big(|\nabla f|^{2}-\alpha qe^{af}-\alpha p { \hat A}(f)-\alpha f_{t} \Big)$, then  for any $\epsilon \in (0,\frac{1}{\alpha})$  we have
 \begin{eqnarray}\nonumber
(\Delta_{\phi}-\partial_{t})F&\geq&
\frac{2(1-\epsilon\alpha)t}{m}\Big( |\nabla f|^{2}-f_{t}-qe^{af}-p\hat{A}\Big)^{2}-2 \langle \nabla F,\nabla f\rangle-\frac{F}{t}
\\\nonumber&&
+\alpha t p\hat{A}_{f}\Big( |\nabla f|^{2}-f_{t}-qe^{af}-p\hat{A}\Big)-2t[\alpha(a+1)-1]e^{af}\langle \nabla q,\nabla f\rangle
\\\nonumber&&
-\alpha t a q e^{af}\Big( |\nabla f|^{2}-f_{t}-qe^{af}-p\hat{A}\Big)-\alpha t\langle \nabla f,\nabla \phi_{t}\rangle
-\alpha t e^{af}\Delta_{\phi}q\\\nonumber&&
-2t\big( (\alpha-1)\hat{A}+\alpha \hat{A}_{f}\big)\langle \nabla f,\nabla p\rangle-\alpha t \Delta_{\phi}p
\\\label{007}&&
-t\Big(2(\alpha-1)p\hat{A}_{f} +\alpha  p\hat{A}_{ff}+2(1-\epsilon\alpha)(m-1)k_{1}+\frac{\alpha k_{2}}{2\epsilon}\\\nonumber&&+2(\alpha-1)qae^{af}+\alpha a^{2}qe^{af}+2(\alpha-1)k_{3}\Big)|\nabla f|^{2}
\\\nonumber&&
-\frac{\alpha t n}{2\epsilon}(k_{2}+k_{3})^{2}
-3\alpha t\sqrt{n}k_{4}|\nabla f|-2\alpha tk_{2}\epsilon |\nabla \phi|^{2}.
\end{eqnarray}
\end{lemma}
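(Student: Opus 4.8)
The plan is to differentiate $F$ directly and estimate the result from below, discarding the manifestly nonnegative pieces. Write $F=t\omega$ with
\[
\omega:=|\nabla f|^{2}-\alpha qe^{af}-\alpha p\hat A(f)-\alpha f_{t}.
\]
Since $\Delta_{\phi}$ acts only in the space variables, $(\Delta_{\phi}-\partial_{t})F=t(\Delta_{\phi}-\partial_{t})\omega-\omega=t(\Delta_{\phi}-\partial_{t})\omega-\tfrac{F}{t}$, which already accounts for the $-F/t$ term in (\ref{007}); it then remains to bound $(\Delta_{\phi}-\partial_{t})\omega$ from below and multiply by $t$. Everywhere I will use (\ref{e3}) in the form $\Delta_{\phi}f=f_{t}-|\nabla f|^{2}+qe^{af}+p\hat A$, i.e. $\Delta_{\phi}f=-\big(|\nabla f|^{2}-f_{t}-qe^{af}-p\hat A\big)$; this is how the combination $\big(|\nabla f|^{2}-f_{t}-qe^{af}-p\hat A\big)$ occurring throughout (\ref{007}) will appear.

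I then handle the four summands of $\omega$ one at a time. For $(\Delta_{\phi}-\partial_{t})|\nabla f|^{2}$ I apply the weighted Bochner formula (\ref{e4}), decompose $Ric_{\phi}=Ric_{\phi}^{m-n}+\tfrac{\nabla\phi\otimes\nabla\phi}{m-n}$, and combine $|{\rm Hess}\,f|^{2}\ge\tfrac{(\Delta f)^{2}}{n}$ with $\Delta f=\Delta_{\phi}f+\langle\nabla\phi,\nabla f\rangle$ and the elementary inequality $\tfrac{(\mu+\nu)^{2}}{n}+\tfrac{\nu^{2}}{m-n}\ge\tfrac{\mu^{2}}{m}$ to obtain
\[
|{\rm Hess}\,f|^{2}+\frac{\langle\nabla\phi,\nabla f\rangle^{2}}{m-n}\ge\frac{(\Delta_{\phi}f)^{2}}{m},
\]
keeping a margin of $|{\rm Hess}\,f|^{2}$ for later; the curvature term is controlled by $Ric_{\phi}^{m-n}\ge-(m-1)k_{1}g$ and the time derivative by Lemma \ref{l1}, $\partial_{t}|\nabla f|^{2}=-2h(\nabla f,\nabla f)+2\langle\nabla f,\nabla f_{t}\rangle$, with $-2(\alpha-1)h(\nabla f,\nabla f)\ge-2(\alpha-1)k_{3}|\nabla f|^{2}$ after the $h$-terms are combined (using $\alpha>1$ and $h\le k_{3}g$). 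For $(\Delta_{\phi}-\partial_{t})(qe^{af})$ and $(\Delta_{\phi}-\partial_{t})(p\hat A(f))$ I expand by the product rule $\Delta_{\phi}(vw)=v\Delta_{\phi}w+w\Delta_{\phi}v+2\langle\nabla v,\nabla w\rangle$ together with $\Delta_{\phi}(e^{af})=e^{af}(a\Delta_{\phi}f+a^{2}|\nabla f|^{2})$, $\Delta_{\phi}(\hat A(f))=\hat A_{f}\Delta_{\phi}f+\hat A_{ff}|\nabla f|^{2}$, $\partial_{t}(e^{af})=ae^{af}f_{t}$, $\partial_{t}(\hat A(f))=\hat A_{f}f_{t}$; these produce the terms carrying the factors $\hat A_{f},\hat A_{ff},a,a^{2}$, the cross terms $e^{af}\langle\nabla q,\nabla f\rangle$ and $\langle\nabla p,\nabla f\rangle$, and the potential terms $e^{af}\Delta_{\phi}q$ and $\Delta_{\phi}p$. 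Finally $(\Delta_{\phi}-\partial_{t})f_{t}$ is the one place where the flow enters through non-commutativity: solving the commutator identity of Lemma \ref{l1} for $\Delta_{\phi}f_{t}$ and using $(\Delta_{\phi}f)_{t}-\partial_{t}f_{t}=\partial_{t}\big((\Delta_{\phi}-\partial_{t})f\big)=\partial_{t}(-|\nabla f|^{2}+qe^{af}+p\hat A)$ by (\ref{e3}), I obtain $(\Delta_{\phi}-\partial_{t})f_{t}$ expressed through $\partial_{t}|\nabla f|^{2}$, $\partial_{t}(qe^{af})$, $\partial_{t}(p\hat A)$ and the corrections $-2\langle h,{\rm Hess}\,f\rangle$, $-2\langle{\rm div}\,h-\tfrac12\nabla({\rm tr}_{g}h),\nabla f\rangle$, $2h(\nabla\phi,\nabla f)$, $-\langle\nabla f,\nabla\phi_{t}\rangle$.

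Adding the four pieces with weights $1,-\alpha,-\alpha,-\alpha$, the algebra is arranged so that all $\langle\nabla(\cdot),\nabla f\rangle$ terms either combine into $-2\langle\nabla\omega,\nabla f\rangle=-\tfrac{2}{t}\langle\nabla F,\nabla f\rangle$ -- because $\nabla\omega=\nabla|\nabla f|^{2}-\alpha\nabla(qe^{af})-\alpha\nabla(p\hat A)-\alpha\nabla f_{t}$ exactly -- or survive as the explicit residuals $-2[\alpha(a+1)-1]e^{af}\langle\nabla q,\nabla f\rangle$, $-2\big((\alpha-1)\hat A+\alpha\hat A_{f}\big)\langle\nabla f,\nabla p\rangle$, $-\alpha\langle\nabla f,\nabla\phi_{t}\rangle$ and $-2\alpha\langle{\rm div}\,h-\tfrac12\nabla({\rm tr}_{g}h),\nabla f\rangle$ appearing in (\ref{007}) (recall $\phi_{t}=\Delta\phi$ by (\ref{e2})); moreover the $f_{t}$-coefficients generated by $\partial_{t}(qe^{af})$ and $\partial_{t}(p\hat A)$ in the two reaction summands cancel exactly against those coming from $(\Delta_{\phi}-\partial_{t})f_{t}$, so that after substituting $\Delta_{\phi}f=-(|\nabla f|^{2}-f_{t}-qe^{af}-p\hat A)$ into the chain-rule terms $aqe^{af}\Delta_{\phi}f$ and $p\hat A_{f}\Delta_{\phi}f$ only the clean $aqe^{af}\big(|\nabla f|^{2}-f_{t}-qe^{af}-p\hat A\big)$- and $p\hat A_{f}\big(|\nabla f|^{2}-f_{t}-qe^{af}-p\hat A\big)$-terms remain, the remaining second-order content being absorbed into the coefficient of $|\nabla f|^{2}$ (which receives $2(\alpha-1)p\hat A_{f}$, $\alpha p\hat A_{ff}$, $2(\alpha-1)qae^{af}$, $\alpha a^{2}qe^{af}$, $2(\alpha-1)k_{3}$, a multiple of $(m-1)k_{1}$ from the curvature, and the $\tfrac{\alpha k_{2}}{2\epsilon}$ produced by the next step). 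It then remains only to dispose of the $h$- and $\nabla\phi$-terms by Young's inequality.

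For the last step, write $2\alpha|\langle h,{\rm Hess}\,f\rangle|\le2\epsilon\alpha|{\rm Hess}\,f|^{2}+\tfrac{\alpha}{2\epsilon}|h|^{2}$ and feed the surviving $2(1-\epsilon\alpha)|{\rm Hess}\,f|^{2}$ (together with $\tfrac{2}{m-n}\langle\nabla\phi,\nabla f\rangle^{2}$ from Bochner) into the dimensional inequality displayed above; this produces the leading term $\tfrac{2(1-\epsilon\alpha)t}{m}\big(|\nabla f|^{2}-f_{t}-qe^{af}-p\hat A\big)^{2}$ and is exactly where $\epsilon\in(0,\tfrac1\alpha)$ is required. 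Then $|h|^{2}\le n(k_{2}+k_{3})^{2}$ gives $-\tfrac{\alpha n}{2\epsilon}(k_{2}+k_{3})^{2}$, $|{\rm div}\,h-\tfrac12\nabla({\rm tr}_{g}h)|\le\tfrac32\sqrt n\,k_{4}$ (from $|\nabla h|\le k_{4}$) gives $-3\alpha\sqrt n\,k_{4}|\nabla f|$, and splitting $2\alpha|h(\nabla\phi,\nabla f)|\le\tfrac{\alpha k_{2}}{2\epsilon}|\nabla f|^{2}+2\alpha k_{2}\epsilon|\nabla\phi|^{2}$ (using $h\ge-k_{2}g$) produces the last two terms; multiplying through by $t$ yields (\ref{007}). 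The main obstacle is entirely bookkeeping: tracking every sign and constant while (i) correctly accounting for the non-commutativity of $\Delta_{\phi}$ and $\partial_{t}$ along the flow via Lemma \ref{l1}, (ii) packaging the roughly dozen first-order cross terms into the single inner product $-2\langle\nabla F,\nabla f\rangle$ plus precisely the residuals in (\ref{007}), and (iii) choosing the Young weights so that exactly the margin $2(1-\epsilon\alpha)|{\rm Hess}\,f|^{2}$ survives to feed the Bakry--\'{E}mery dimensional inequality; there is no obstruction beyond this, and once the four pieces of the second step are in hand, (\ref{007}) follows by collecting terms.
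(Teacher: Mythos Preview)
Your proposal is correct and follows essentially the same route as the paper: both arguments combine the weighted Bochner formula (\ref{e4}), the commutator identity of Lemma~\ref{l1} for $(\Delta_{\phi}f)_{t}$, the dimensional inequality (\ref{a10}), and the same Young-inequality splittings of $\langle h,{\rm Hess}\,f\rangle$, $h(\nabla\phi,\nabla f)$, and ${\rm div}\,h-\tfrac12\nabla({\rm tr}_g h)$ with the same weights. The only difference is organizational---you compute $(\Delta_{\phi}-\partial_{t})$ of the four summands of $\omega$ separately and then sum, whereas the paper first computes $\Delta_{\phi}F$ via (\ref{2}) and $\partial_{t}F$ via (\ref{6}) and subtracts---but the term-by-term bookkeeping and the resulting inequality are identical.
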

\begin{proof}
From (\ref{e3}) we have
 \begin{equation}\label{1}
\Delta_{\phi}f=-\frac{F}{t}-(\alpha-1)\big(p\hat{A}+qe^{af}+f_{t}\big).
\end{equation}
By the weighted  Bochner formula, (\ref{e3}) and Lemma \ref{l1}, we calculate
\begin{eqnarray}\nonumber
\Delta_{\phi}F&=&2t|{\rm Hess} f|^{2}+2tRic_{\phi}(\nabla f,\nabla f)+2t \langle \nabla\Delta_{\phi}f,\nabla f\rangle-\alpha t \Delta_{\phi}f_{t}\\\nonumber&&-\alpha t e^{af}\Delta_{\phi}q
-\alpha t p\hat{A}_{f}\Delta_{\phi} f-\alpha t p\hat{A}_{ff}|\nabla f|^{2}
-\alpha t a^{2}qe^{af}|\nabla f|^{2}\\\nonumber&&
-\alpha t \hat{A}\Delta_{\phi}p
-2\alpha t \hat{A}_{f}\langle \nabla p,\nabla f\rangle-\alpha t a q e^{af}\Delta_{\phi}f
\\\nonumber&&
-2t\alpha a e^{af} \langle \nabla q,\nabla f\rangle\\\label{2}&=&
2t|{\rm Hess} f|^{2}+2tRic_{\phi}(\nabla f,\nabla f)+2t \langle \nabla\Delta_{\phi}f,\nabla f\rangle-\alpha t (\Delta_{\phi} f)_{t}\\\nonumber&&-2\alpha t\langle h, {\rm Hess} f\rangle
-2\alpha t\langle {\rm div }h-\frac{1}{2}\nabla({\rm tr}_{g} h), \nabla f\rangle+2\alpha th(\nabla \phi,\nabla f)\\\nonumber&&-\alpha t\langle \nabla f,\nabla \phi_{t}\rangle
-\alpha t e^{af}\Delta_{\phi}q-\alpha t p\hat{A}_{f}\Delta_{\phi} f-\alpha t p\hat{A}_{ff}|\nabla f|^{2}\\\nonumber&&
-\alpha t \hat{A}\Delta_{\phi}p
-2\alpha t \hat{A}_{f}\langle \nabla p,\nabla f\rangle-\alpha t a^{2}qe^{af}|\nabla f|^{2}\\\nonumber&&
-\alpha t a q e^{af}\Delta_{\phi}f-2t\alpha a e^{af} \langle \nabla q,\nabla f\rangle
\end{eqnarray}
By (\ref{1}) we have
 \begin{equation}\label{3}
\nabla\Delta_{\phi}f=-\frac{\nabla F}{t}-(\alpha-1)\big(p\hat{A}_{f}\nabla f+\hat{A}\nabla p+e^{af}\nabla q+qae^{af}\nabla f+\nabla f_{t}\big)
\end{equation}
 and
 \begin{equation}\label{4}
(\Delta_{\phi}f)_{t}=\frac{F}{t^{2}}-\frac{F_{t}}{t}-(\alpha-1)\big(p\hat{A}_{f}f_{t}+p_{t}\hat{A}+q_{t}e^{af}+aqf_{t}e^{af}+f_{tt}\big).
\end{equation}
Plugging (\ref{3}) and (\ref{4}) into (\ref{2}), we obtain
\begin{eqnarray}\nonumber
\Delta_{\phi}F&=&
2t|{\rm Hess} f|^{2}+2tRic_{\phi}(\nabla f,\nabla f)-2 \langle \nabla F,\nabla f\rangle -2t(\alpha-1)p\hat{A}_{f}|\nabla f|^{2}\\\nonumber&&-2t(\alpha-1)\hat{A}\langle \nabla p,\nabla f\rangle
-2t(\alpha-1)e^{af}\langle \nabla q,\nabla f\rangle
-2t(\alpha-1)qae^{af}|\nabla f|^{2}\\\nonumber&&-2t(\alpha-1)\langle \nabla f_{t},\nabla f\rangle
-\alpha \frac{F}{t}+\alpha F_{t}-\alpha t\hat{A}\Delta_{\phi}p-2\alpha t\hat{A}_{f}\langle \nabla p,\nabla f\rangle\\\nonumber&&+\alpha t (\alpha-1)\big(p\hat{A}_{f}f_{t}+p_{t}\hat{A}+q_{t}e^{af}+aqf_{t}e^{af}+f_{tt}\big)
\\\label{5}&&-2\alpha t\langle h, {\rm Hess} f\rangle
-2\alpha t\langle {\rm div }h-\frac{1}{2}\nabla({\rm tr}_{g} h), \nabla f\rangle+2\alpha th(\nabla \phi,\nabla f)\\\nonumber&&-\alpha t\langle \nabla f,\nabla \phi_{t}\rangle
-\alpha t e^{af}\Delta_{\phi}q-\alpha t p\hat{A}_{f}\Delta_{\phi} f-\alpha t p\hat{A}_{ff}|\nabla f|^{2}\\\nonumber&&
-\alpha t a^{2}qe^{af}|\nabla f|^{2}-\alpha t a q e^{af}\Delta_{\phi}f-2t\alpha a e^{af} \langle \nabla q,\nabla f\rangle.
\end{eqnarray}
On the other hand, by Lemma (\ref{l1}) we derive
\begin{eqnarray}\nonumber
F_{t}&=&|\nabla f|^{2}-\alpha qe^{af}-\alpha {p \hat A}(f)-\alpha f_{t} \\\label{6}&&
-2th(\nabla f, \nabla f)+2t\langle \nabla f, \nabla f_{t}\rangle-\alpha t\big(p\hat{A}_{f}f_{t}+p_{t}\hat{A}+q_{t}e^{af}+aqf_{t}e^{af}+f_{tt}\big).
\end{eqnarray}
Applying (\ref{5}) and (\ref{6}), we get
 \begin{eqnarray}\nonumber
(\Delta_{\phi}-\partial_{t})F&=&
2t|{\rm Hess} f|^{2}+2tRic_{\phi}(\nabla f,\nabla f)-2 \langle \nabla F,\nabla f\rangle-\frac{F}{t}\\\nonumber&& -2\alpha t\langle h, {\rm Hess} f\rangle
-2\alpha t\langle {\rm div }h-\frac{1}{2}\nabla({\rm tr}_{g} h), \nabla f\rangle+2\alpha th(\nabla \phi,\nabla f)\\\nonumber&&
-2t(\alpha-1)p\hat{A}_{f}|\nabla f|^{2}-2t(\alpha-1)\hat{A}\langle \nabla p,\nabla f\rangle
\\\label{7}&&-2t(\alpha-1)e^{af}\langle \nabla q,\nabla f\rangle-2t(\alpha-1)qae^{af}|\nabla f|^{2}-\alpha t\langle \nabla f,\nabla \phi_{t}\rangle
\\\nonumber&&-\alpha t e^{af}\Delta_{\phi}q+\alpha t p\hat{A}_{f}\Big( |\nabla f|^{2}-f_{t}-qe^{af}-p\hat{A}\Big)-\alpha t p\hat{A}_{ff}|\nabla f|^{2}\\\nonumber&&-\alpha t\hat{A}\Delta_{\phi}p-2\alpha t\hat{A}_{f}\langle \nabla p,\nabla f\rangle
-\alpha t a^{2}qe^{af}|\nabla f|^{2}\\\nonumber&&-\alpha t a q e^{af}\Big( |\nabla f|^{2}-f_{t}-qe^{af}-p\hat{A}\Big)\\\nonumber&&-2t\alpha a e^{af} \langle \nabla q,\nabla f\rangle
-2(\alpha-1)th(\nabla f, \nabla f).
\end{eqnarray}
We can write  the boundeness condition on $h_{ij}$ as $-(k_{2}+k_{3})g_{ij}\leq h_{ij}\leq (k_{2}+k_{3})g_{ij}$ so that
\begin{equation}\label{07}
|h|^{2}\leq n(k_{2}+k_{3})^{2},
\end{equation}
since $h_{ij}$ is a symmetric tensor. The Young's inequality, for any $\epsilon\in(0,\frac{1}{\alpha})$ implies that
  \begin{equation}\label{8}
\langle h, {\rm Hess} f\rangle\leq \epsilon|{\rm Hess} f|^{2}+\frac{1}{4\epsilon}|h|^{2}\leq \epsilon|{\rm Hess}f|^{2}+\frac{n}{4\epsilon}(k_{2}+k_{3})^{2}.
\end{equation}
Also, we have
  \begin{equation}\label{9}
|{\rm div} h-\frac{1}{2}\nabla({\rm tr}_{g}h)|=|g^{ij}\nabla_{i}h_{jl}-\frac{1}{2}g^{ij}\nabla_{l}h_{ij}|\leq \frac{3}{2}|g||\nabla h|\leq \frac{3}{2}\sqrt{n}k_{4}.
\end{equation}
Notice also that for any $m>n$ we derive
\begin{eqnarray*}
0&\leq& \Big( \sqrt{\frac{m-n}{mn}}\Delta f +\sqrt{\frac{n}{m(m-n)}}\langle \nabla f, \nabla \phi \rangle\Big)^{2}\\&=&
(\frac{1}{n}-\frac{1}{m})(\Delta f)^{2}+\frac{2}{m}\Delta f \langle \nabla f, \nabla \phi \rangle+(\frac{1}{m-n}-\frac{1}{m})\langle \nabla f, \nabla \phi \rangle ^{2}\\&\leq&
|{\rm Hess} f|^{2}-\frac{1}{m}\Big((\Delta f)^{2}-2\Delta f \langle \nabla f, \nabla \phi \rangle+\langle \nabla f, \nabla \phi \rangle^{2} \Big)+\frac{1}{m-n}\langle \nabla f, \nabla \phi \rangle^{2}\\&=&
|{\rm Hess} f|^{2}-\frac{(\Delta_{\phi} f)^{2}}{m}+\frac{1}{m-n}\langle \nabla f, \nabla \phi \rangle^{2}.
\end{eqnarray*}
Therefore
  \begin{equation}\label{a10}
|{\rm Hess} f|^{2}\geq \frac{(\Delta_{\phi} f)^{2}}{m}-\frac{1}{m-n}\langle \nabla f, \nabla \phi \rangle^{2}.
\end{equation}
Substituting (\ref{8}), (\ref{9}),  and (\ref{a10}) into (\ref{7}) we conclude
 \begin{eqnarray}\nonumber
(\Delta_{\phi}-\partial_{t})F&\geq&
\frac{2(1-\epsilon\alpha)t}{m}\Big( |\nabla f|^{2}-f_{t}-qe^{af}-p\hat{A}\Big)^{2}+2\alpha t h(\nabla \phi, \nabla f)
\\\nonumber&&+2t(1-\epsilon\alpha)Ric_{\phi}^{m-n}(\nabla f ,\nabla f) -2 \langle \nabla F,\nabla f\rangle-\frac{F}{t}
\\\nonumber&&+\alpha t p\hat{A}_{f}\Big( |\nabla f|^{2}-f_{t}-qe^{af}-p\hat{A}\Big)-\alpha t \Delta_{\phi}p
\\\nonumber&&-2t[\alpha(a+1)-1]e^{af}\langle \nabla q,\nabla f\rangle\\\nonumber&&
-\alpha t a q e^{af}\Big( |\nabla f|^{2}-f_{t}-qe^{af}-p\hat{A}\Big)\\\nonumber&&-\alpha t\langle \nabla f,\nabla \phi_{t}\rangle
-\alpha t e^{af}\Delta_{\phi}q
-2t\big( (\alpha-1)\hat{A}+\alpha \hat{A}_{f}\big)\langle \nabla f,\nabla p\rangle
\\\label{0070}&&
-t\Big(2(\alpha-1)p\hat{A}_{f} +\alpha  p\hat{A}_{ff}+2(\alpha-1)qae^{af}+\alpha a^{2}qe^{af}\Big)|\nabla f|^{2}\\\nonumber&&-2(\alpha-1)t h(\nabla f, \nabla f)
-\frac{\alpha t n}{2\epsilon}(k_{2}+k_{3})^{2}
-3\alpha t\sqrt{n}k_{4}|\nabla f|.
\end{eqnarray}
By Young's inequality for any $\epsilon\in (0,\frac{1}{\alpha})$, we arrive at
  \begin{equation}\label{lygs}
2\alpha t h(\nabla \phi, \nabla f)\geq -2\alpha tk_{2} \langle \nabla \phi,\nabla f\rangle\geq-\frac{\alpha t k_{2}}{2\epsilon}|\nabla f|^{2}-2\alpha t k_{2}\epsilon|\nabla \phi|^{2}.
\end{equation}
Replacing  (\ref{lygs}) into (\ref{0070}) and  using  the assumptions on bounds of $Ric_{\phi}^{m-n}$ and $h$, we get the inequality (\ref{007}).
\end{proof}

\begin{proof}[Proof of theorem \ref{t1}]
Since the Ricci tensor and the evolution of the metric are bounded, then $g(t)$ is uniformly equivalent to the initial metric $g(0)$ (see \cite[Corollary 6.11]{BC}),
\begin{equation*}
e^{-2k_{2}T}g(0)\leq g(t)\leq e^{2k_{3}T}g(0).
\end{equation*}
Thus  the manifold $(M,g(t))$ is also complete for $t\in [0,T]$. Let $\psi(s)$ be a  $C^{2}$-function  on $[0,+\infty)$,
  \begin{equation*}\psi(s)=\begin{cases}1,&s\in[0,1],\\0,&s\in[2,+\infty),
\end{cases}
\end{equation*}
 and it satisfies $\psi(s)\in[0,1]$, $-c_{0}\leq\psi'(s)\leq0$, $\psi''(s)\geq-c_{1}$, and $\frac{|\psi''(s)|^{2}}{\psi(s)}\leq c_{1}$, where $c_{1}$ is an absolute constant. Let $R\geq1$ and define a   function
\begin{equation*}
\eta(x,t)=\psi(\frac{r(x,t)}{R}),
\end{equation*}
where $r(x,t)=d(x,x_{0},r)$.  Using the argument of \cite{CA, LY}, we can apply maximum principle and invoke Calabi's trick to  assume everywhere smoothness of $\eta(x,t)$ since $\psi(s)$ is in general Lipschitz. Also, we use  generalization Laplacian comparison theorem \cite{DB,  XDL, S,  JYW} to obtain inequalities of $\eta(x,t)$. Since $Ric_{\phi}^{m-n}\geq -(m-1)k_{1}$, the generalization Lpalacian comparison theorem implies that
\begin{equation*}
\Delta_{\phi}r(x)\leq (m-1)\sqrt{k_{1}}\coth(\sqrt{k_{1}}r(x))
\end{equation*}
 and
\begin{eqnarray}\label{t11}
\Delta_{\phi}\eta&=&\psi'\frac{\Delta_{\phi} r}{R}+\psi''\frac{|\nabla r|^{2}}{R^{2}}\\\nonumber&\geq&-\frac{c_{0}}{R}(m-1)\sqrt{k_{1}}\coth(\sqrt{k_{1}}r(x)) -\frac{c_{1}}{R^{2}}\\\nonumber&\geq&-\frac{c_{0}}{R}(m-1)(\sqrt{k_{1}}+\frac{2}{R})-\frac{c_{1}}{R^{2}}.
\end{eqnarray}
Also, we have
\begin{equation}\label{t12}
\frac{|\nabla \eta|^{2}}{\eta}=\frac{|\psi'|^{2}|\nabla r|^{2}}{R^{2}\psi}\leq \frac{c_{1}}{R^{2}}.
\end{equation}
Let $G=\eta F$. Fix arbitrary $T_{1}\in(0,T]$  and assume that $G$ achieves its maximum at point $(x_{0},t_{0})\in Q_{2R,T_{1}}$. If $G(x_{0},t_{0})\leq0$, then the result holds trivially  and we done. Hence, we may assume that $G(x_{0},t_{0})>0$. In this point we have
\begin{equation*}
\nabla G=0,\qquad\Delta G\leq0,\qquad \partial_{t}G\geq0.
\end{equation*}
Therefore, we conclude
\begin{equation}\label{t13}
\nabla F=-\frac{F}{\eta}\nabla \eta
\end{equation}
and
\begin{equation}\label{t14}
0\geq (\Delta_{\phi}-\partial_{t})G=F(\Delta_{\phi}-\partial_{t})\eta+\eta(\Delta_{\phi}-\partial_{t})F+2\langle\nabla\eta,\nabla F\rangle.
\end{equation}
By \cite[p. 494]{JS}, there exist  a constant $c_{2}$ such that
\begin{equation}\label{t15}
-F\eta_{t}\geq -c_{2}k_{2}F.
\end{equation}
 Replacing (\ref{t11})-(\ref{t13}) and (\ref{t15}) into (\ref{t14}) we get
\begin{eqnarray}\label{t16}
0\geq -\Big(\frac{c_{0}}{R}(m-1)(\sqrt{k_{1}}+\frac{2}{R})+\frac{3c_{1}}{R^{2}}+c_{2}k_{2} \Big)F+\eta(\Delta_{\phi}-\partial_{t})F.
\end{eqnarray}
As in \cite{LC, YZ, YY}, we set
\begin{equation*}
\mu=\frac{|\nabla f|^{2}(x_{0},t_{0})}{F(x_{0},t_{0})}\geq0,
\end{equation*}
then at point $(x_{0},t_{0})$, it follows that $|\nabla f|=\sqrt{\mu F}$,
\begin{equation*}
|\nabla f|^{2}-f_{t}-qe^{af}-p\hat{A}=\Big(\mu-\frac{t_{0}\mu-1}{t_{0}\alpha} \Big)F,
\end{equation*}

\begin{equation*}
\eta\langle \nabla f,\nabla F\rangle=-F\langle \nabla f,\nabla \eta\rangle\leq\frac{\sqrt{c_{1}}}{R}\eta^{\frac{1}{2}}F|\nabla f|,
\end{equation*}
and
\begin{equation*}
3\alpha\sqrt{n}k_{4}|\nabla f|\leq 2k_{4}|\nabla f|^{2}+\frac{9}{8}n\alpha^{2}k_{4}.
\end{equation*}
Using the above three relation, inequality (\ref{007}), and inequality (\ref{t16}) at point $(x_{0},t_{0})$, we obtain
\begin{eqnarray}\nonumber
0&\geq&\frac{2(1-\epsilon\alpha)t_{0}}{m}\eta \Big(\mu-\frac{t_{0}\mu-1}{t_{0}\alpha} \Big)^{2}F^{2}
-\frac{2\sqrt{c_{1}}}{R}\eta^{\frac{1}{2}}\mu^{\frac{1}{2}}F^{\frac{3}{2}}-\frac{\eta F}{t_{0}}
\\\nonumber&&
+\alpha t_{0}\eta p\hat{A}_{f}\Big(\mu-\frac{t_{0}\mu-1}{t_{0}\alpha} \Big)F
-2t_{0}\eta [\alpha(a+1)-1]e^{af}\langle \nabla q,\nabla f\rangle
\\\nonumber&&
-\alpha t_{0} a q e^{af}\eta\Big(\mu-\frac{t_{0}\mu-1}{t_{0}\alpha} \Big)F-\alpha t_{0}\eta\langle \nabla f,\nabla \phi_{t}\rangle
-\alpha t_{0}\eta e^{af}\Delta_{\phi}q
\\\nonumber&&
-2t_{0}\eta \big( (\alpha-1)\hat{A}+\alpha\hat{A}_{f}\big)\langle \nabla p,\nabla f\rangle-\alpha t_{0}\eta \hat{A} \Delta_{\phi}p
\\\label{107}&&
-t_{0}\eta\Big(2(\alpha-1)p\hat{A}_{f} +\alpha p \hat{A}_{ff}+2(1-\epsilon\alpha)(m-1)k_{1}+\frac{\alpha k_{2}}{2\epsilon}\\\nonumber&&+2(\alpha-1)qae^{af}+\alpha a^{2}qe^{af}+2(\alpha-1)k_{3}+2k_{4}\Big)\mu F
\\\nonumber&&
-\frac{\alpha t_{0} n}{2\epsilon}\eta(k_{2}+k_{3})^{2}
-\frac{9}{8}t_{0}\eta n\alpha^{2}k_{4}-2\alpha t_{0}\eta k_{2}\epsilon |\nabla \phi|^{2}\\\nonumber&&
-\Big(\frac{c_{0}}{R}(m-1)(\sqrt{k_{1}}+\frac{2}{R})+\frac{3c_{1}}{R^{2}}+c_{2}k_{2} \Big)F.
\end{eqnarray}
Multiply both sides of (\ref{107}) by $t_{0}\eta$. By direct computation we conclude that
\begin{eqnarray}\nonumber
0&\geq&\frac{2(1-\epsilon\alpha)}{m\alpha^{2}} \Big(1+(\alpha-1)t_{0}\mu \Big)^{2}G^{2}
-\frac{2\sqrt{c_{1}}}{R}t_{0}\mu^{\frac{1}{2}}G^{\frac{3}{2}}-\eta G
\\\nonumber&&
+t_{0}\eta p \hat{A}_{f}G
-2t_{0}^{2}\eta^{2} [\alpha(a+1)-1]e^{af}\langle \nabla q,\nabla f\rangle
\\\nonumber&&
- t_{0} a q e^{af}\eta\Big(1+(\alpha-1)t_{0}\mu \Big)G
-\alpha t_{0}^{2}\eta^{2}\langle \nabla f,\nabla \phi_{t}\rangle
-\alpha t_{0}^{2}\eta^{2} e^{af}\Delta_{\phi}q
\\\nonumber&&
-2t_{0}^{2}\eta^{2} \big( (\alpha-1)\hat{A}+\alpha\hat{A}_{f}\big)\langle \nabla p,\nabla f\rangle-\alpha t_{0}^{2}\eta^{2} \hat{A} \Delta_{\phi}p
\\\label{207}&&
-t_{0}^{2}\eta\Big((\alpha-1)p\hat{A}_{f} +\alpha  p\hat{A}_{ff}+2(1-\epsilon\alpha)(m-1)k_{1}+\frac{\alpha t_{0}k_{2}}{2\epsilon}\\\nonumber&&+2(\alpha-1)qae^{af}+\alpha a^{2}qe^{af}+2(\alpha-1)k_{3}+2k_{4}\Big)\mu G
\\\nonumber&&
-\frac{\alpha t_{0}^{2} n}{2\epsilon}\eta^{2}(k_{2}+k_{3})^{2}
-\frac{9}{8}t_{0}^{2}\eta^{2} n\alpha^{2}k_{4}-2\alpha t_{0}^{2}\eta^{2} k_{2}\epsilon |\nabla \phi|^{2}\\\nonumber&&
-t_{0}\Big(\frac{c_{0}}{R}(m-1)(\sqrt{k_{1}}+\frac{2}{R})+\frac{3c_{1}}{R^{2}}+c_{2}k_{2} \Big)G.
\end{eqnarray}
By Young's inequality, we infer
\begin{eqnarray*}
&&\frac{2\sqrt{c_{1}}}{R}\mu^{\frac{1}{2}}G^{\frac{3}{2}}\leq \frac{4(1-\epsilon\alpha)}{m\alpha^{2}}(\alpha-1)\mu G^{2}+\frac{m\alpha^{2}c_{1}G}{4(1-\epsilon\alpha)(\alpha-1)R^{2}}.
\end{eqnarray*}
The Cauchy's inequality and  Young's inequality imply that
\begin{eqnarray*}
&&2\eta^{2} [\alpha(a+1)-1]e^{af}\langle \nabla q,\nabla f\rangle\\&&\leq 2 [\alpha(a+1)-1]k|\nabla q||\nabla f|\\&&\leq
2 [\alpha(a+1)-1]k\sigma_{2}\mu^{\frac{1}{2}}G^{\frac{1}{2}}\\
&&\leq \frac{2(1-\epsilon\alpha)\delta}{m\alpha^{2}} (\alpha-1)^{2}\mu^{2}G^{2}+
\frac{3}{4}\Big(\frac{2m\alpha^{2}}{(1-\epsilon\alpha)\delta(\alpha-1)^{2}}\Big)^{\frac{1}{3}} [\alpha(a+1)-1]^{\frac{4}{3}}k^{\frac{4}{3}}\sigma_{2}^{\frac{4}{3}},
\end{eqnarray*}
and
\begin{eqnarray*}
&&2 \big( (\alpha-1)\hat{A}+\alpha\hat{A}_{f}\big)\langle \nabla p,\nabla f\rangle\\&&\leq
2 \big( (\alpha-1)\lambda_{1} +\alpha\lambda_{2}\big)| \nabla p||\nabla f|\\
&&\leq 2 \big( (\alpha-1)\lambda_{1} +\alpha\lambda_{2}\big)\gamma_{2}\mu^{\frac{1}{2}}G^{\frac{1}{2}}\\&&\leq
\frac{(1-\epsilon\alpha)(1-\delta)}{2m\alpha^{2}}  (\alpha-1)^{2}\mu^{2}G^{2}\\&&+
\frac{3}{4}\Big(\frac{m\alpha^{2}}{2(1-\epsilon\alpha)(1-\delta) (\alpha-1)^{2}}\Big)^{\frac{1}{3}} \big( (\alpha-1)\lambda_{1} +\alpha\lambda_{2}\big)^{\frac{4}{3}}\gamma_{2}^{\frac{4}{3}}.
\end{eqnarray*}
Also, we have
\begin{eqnarray*}
 \alpha\langle \nabla f,\nabla \phi_{t}\rangle&\leq&  \alpha|\nabla \Delta \phi||\nabla f|\leq
\alpha\theta_{2}\mu^{\frac{1}{2}}G^{\frac{1}{2}}\\
&\leq& \frac{(1-\epsilon\alpha)(1-\delta)}{2m\alpha^{2}}  (\alpha-1)^{2}\mu^{2}G^{2}\\&&+
\frac{3}{4}\Big(\frac{m\alpha^{2}}{2(1-\epsilon\alpha)(1-\delta) (\alpha-1)^{2}}\Big)^{\frac{1}{3}} \alpha^{\frac{4}{3}}\theta_{2}^{\frac{4}{3}},
\end{eqnarray*}
where $\delta\in(0,1)$  is an arbitrary constant. Combining the above inequalities  we conclude that
\begin{eqnarray}\nonumber
0&\geq&\frac{2(1-\epsilon\alpha)}{m\alpha^{2}}G^{2}
+\frac{(1-\epsilon\alpha)(1-\delta)}{m\alpha^{2}} (\alpha-1)^{2}t_{0}^{2}\mu^{2} G^{2}
-t_{0}\frac{m\alpha^{2}c_{1}G}{4(1-\epsilon\alpha)(\alpha-1)R^{2}}\\\nonumber&&- G-t_{0}\gamma_{1}\lambda_{2} G
-\frac{3}{4} t_{0}^{2}
\Big(\frac{2m\alpha^{2}}{(1-\epsilon\alpha)\delta (\alpha-1)^{2}}\Big)^{\frac{1}{3}} [\alpha(a+1)-1]^{\frac{42}{3}}k^{\frac{4}{3}}\sigma_{2}^{\frac{4}{3}}\\\nonumber&&-t_{0}a\sigma_{1}kG
- \frac{3}{4}t_{0}^{2}
\Big(\frac{m\alpha^{2}}{4(1-\epsilon\alpha)(1-\delta) (\alpha-1)^{2}}\Big)^{\frac{1}{3}} \alpha^{\frac{4}{3}}\theta_{2}^{\frac{4}{3}}-\alpha t_{0}^{2}k\sigma_{3}\\\nonumber&&
-\frac{3}{4}t_{0}^{2}\Big(\frac{m\alpha^{2}}{2(1-\epsilon\alpha)(1-\delta) (\alpha-1)^{2}}\Big)^{\frac{1}{3}} \big( (\alpha-1)\lambda_{1} +\alpha\lambda_{2}\big)^{\frac{4}{3}}\gamma_{2}^{\frac{4}{3}}-\alpha t_{0}^{2}\lambda_{1} \gamma_{3}
\\\label{307}&&
-t_{0}^{2}\Big((\alpha-1)( \gamma_{1}\lambda_{2}+2a\sigma_{1} k +2k_{3})+\alpha   \gamma_{1}\lambda_{3}+2(1-\epsilon\alpha)(m-1)k_{1}+\frac{\alpha k_{2}}{2\epsilon}\\\nonumber&&+\alpha a^{2}\sigma_{1} k+2k_{4}+a\sigma_{2}k\Big)\mu G
\\\nonumber&&
-\frac{\alpha t_{0}^{2} n}{2\epsilon}(k_{2}+k_{3})^{2}
-\frac{9}{8}t_{0}^{2}n\alpha^{2}k_{4}-2\alpha t_{0}^{2} k_{2}\epsilon \theta_{1}^{2}\\\nonumber&&
-t_{0}\Big(\frac{c_{0}}{R}(m-1)(\sqrt{k_{1}}+\frac{2}{R})+\frac{3c_{1}}{R^{2}}+c_{2}k_{2} \Big)G.
\end{eqnarray}
Set
\begin{eqnarray*}
C_{1}&:=&(\alpha-1)( \gamma_{1}\lambda_{2}+2a\sigma_{1} k +2k_{3})+\alpha  \gamma_{1}\lambda_{3}+2(1-\epsilon\alpha)(m-1)k_{1}\\&&+\frac{\alpha k_{2}}{2\epsilon}+\alpha a^{2}\sigma_{1} k+2k_{4}+a\sigma_{2}k.
\end{eqnarray*}
By Young's inequality, we obtain
\begin{equation}\label{407}
C_{1}\mu G\leq \frac{(1-\epsilon\alpha)(1-\delta)(\alpha-1)^{2}}{m\alpha^{2}} \mu^{2}G^{2}+\frac{m\alpha^{2}}{4(1-\epsilon\alpha)(1-\delta)(\alpha-1)^{2}}C_{1}^{2}.
\end{equation}
Plugging (\ref{407}) int (\ref{307}), we deduce
\begin{eqnarray}\nonumber
0&\geq&\frac{2(1-\epsilon\alpha)}{m\alpha^{2}}G^{2}\\\nonumber&&
-\Big[ 1+t_{0}\Big(\frac{m\alpha^{2}c_{1}}{4(1-\epsilon\alpha)(\alpha-1)R^{2}}+\gamma_{1}\lambda_{2}+a\sigma_{1}k \\\nonumber&&+\frac{c_{0}}{R}(m-1)(\sqrt{k_{1}}+\frac{2}{R})+\frac{3c_{1}}{R^{2}}+c_{2}k_{2} \Big) \Big]G
\\\nonumber&&
-\frac{3}{4} t_{0}^{2}
\Big(\frac{2m\alpha^{2}}{(1-\epsilon\alpha)\delta (\alpha-1)^{2}}\Big)^{\frac{1}{3}} [\alpha(a+1)-1]^{\frac{4}{3}}k^{\frac{4}{3}}\sigma_{2}^{\frac{4}{3}}\\\nonumber&&
- \frac{3}{4}t_{0}^{2}
\Big(\frac{m\alpha^{2}}{2(1-\epsilon\alpha)(1-\delta) (\alpha-1)^{2}}\Big)^{\frac{1}{3}} \alpha^{\frac{4}{3}}\theta_{2}^{\frac{4}{3}}-\alpha t_{0}^{2}k\sigma_{3}\\\nonumber&&
-\frac{3}{4}t_{0}^{2}\Big(\frac{m\alpha^{2}}{2(1-\epsilon\alpha)(1-\delta) (\alpha-1)^{2}}\Big)^{\frac{1}{3}} \big( (\alpha-1)\lambda_{1} +\alpha\lambda_{2}\big)^{\frac{4}{3}}\gamma_{2}^{\frac{4}{3}}-\alpha t_{0}^{2}\lambda_{1} \gamma_{3}
\\\label{507}&&
-t_{0}^{2} \frac{m\alpha^{2}}{4(1-\epsilon\alpha)(1-\delta)(\alpha-1)^{2}}C_{1}^{2}
\\\nonumber&&
-\frac{\alpha t_{0}^{2} n}{2\epsilon}(k_{2}+k_{3})^{2}
-\frac{9}{8}t_{0}^{2}n\alpha^{2}k_{4}-2\alpha t_{0}^{2} k_{2}\epsilon \theta_{1}^{2}
\end{eqnarray}
Suppose that
\begin{eqnarray*}
D_{1}&:=& 1+t_{0}\Big(\frac{m\alpha^{2}c_{1}}{4(1-\epsilon\alpha)(\alpha-1)R^{2}}+\gamma_{1}\lambda_{2}+a\sigma_{1}k \\\nonumber&&+\frac{c_{0}}{R}(m-1)(\sqrt{k_{1}}+\frac{2}{R})+\frac{3c_{1}}{R^{2}}+c_{2}k_{2} \Big) ,
\end{eqnarray*}
and
\begin{eqnarray*}\nonumber
E_{1}&:=&
\frac{3}{4}
\Big(\frac{2m\alpha^{2}}{2(1-\epsilon\alpha)\delta (\alpha-1)^{2}}\Big)^{\frac{1}{3}} [\alpha(a+1)-1]^{\frac{42}{3}}k^{\frac{4}{3}}\sigma_{2}^{\frac{4}{3}}\\\nonumber&&
+ \frac{3}{4}
\Big(\frac{m\alpha^{2}}{2(1-\epsilon\alpha)(1-\delta) (\alpha-1)^{2}}\Big)^{\frac{1}{3}} \alpha^{\frac{4}{3}}\theta_{2}^{\frac{4}{3}}+\alpha k\sigma_{3}
\\\nonumber&&
+\frac{3}{4}\Big(\frac{m\alpha^{2}}{2(1-\epsilon\alpha)(1-\delta) (\alpha-1)^{2}}\Big)^{\frac{1}{3}} \big( (\alpha-1)\lambda_{1} +\alpha\lambda_{2}\big)^{\frac{4}{3}}\gamma_{2}^{\frac{4}{3}}+\alpha \lambda_{1} \gamma_{3}
\\&&
+ \frac{m\alpha^{2}}{4(1-\epsilon\alpha)(1-\delta)(\alpha-1)^{2}}C_{1}^{2}
\\\nonumber&&
+\frac{\alpha  n}{2\epsilon}(k_{2}+k_{3})^{2}
+\frac{9}{8}n\alpha^{2}k_{4}+2\alpha  k_{2}\epsilon \theta_{1}^{2}.
\end{eqnarray*}
Then we can write (\ref{507}) as
\begin{equation*}
0\geq\frac{2(1-\epsilon\alpha)}{m\alpha^{2}}G^{2} -D_{1} G- t_{0}^{2}E_{1}.
\end{equation*}
For a positive number $\tilde{a}$ and two nonnegative  numbers $\tilde{b},\tilde{c}$,  the  quadratic inequality of the form $\tilde{a}x^{2}-\tilde{b}x-\tilde{c}\leq0$ implies that $x\leq\frac{\tilde{b}}{\tilde{a}}+\sqrt{\frac{\tilde{c}}{\tilde{a}}}$, Therefore,
 \begin{equation*}
G\leq \frac{m\alpha^{2}}{2(1-\epsilon\alpha)}D_{1}+t_{0}\Big(\frac{m\alpha^{2}}{2(1-\epsilon\alpha)}E_{1} \Big)^{\frac{1}{2}}.
\end{equation*}
To obtain the required  result on $F(x,t)$, we have $\eta(x,T_{1})=1$  whenever $d(x,x_{0},T_{1})\leq R$. Hence
\begin{eqnarray*}
&&\Big(|\nabla f|^{2}-\alpha qe^{af}-\alpha p { \hat A}(f)-\alpha f_{t}\Big)(x,T_{1})=\frac{F(x,T_{1})}{T_{1}}\leq \frac{G(x_{0},t_{0})}{T_{1}}\\&&\leq
 \frac{m\alpha^{2}}{2T_{1}(1-\epsilon\alpha)}
+
\frac{m\alpha^{2}}{2(1-\epsilon\alpha)}\Big(\frac{m\alpha^{2}c_{1}}{4(1-\epsilon\alpha)(\alpha-1)R^{2}}+\gamma_{1}\lambda_{2}+a\sigma_{1}k \\\nonumber&&+\frac{c_{0}}{R}(m-1)(\sqrt{k_{1}}+\frac{2}{R})+\frac{3c_{1}}{R^{2}}+c_{2}k_{2} \Big)
+\Big(\frac{m\alpha^{2}}{2(1-\epsilon\alpha)}E_{1} \Big)^{\frac{1}{2}}
\end{eqnarray*}
Since $T_{1}$ is arbitrary, this completes the proof.
\end{proof}
\begin{corollary}\label{c1}
Let $(M, g(0),e^{-\phi_{0}}dv)$  be a complete noncompact weighted Riemannian manifold without boundary, and let $g(t), \phi(t)$ evolve by (\ref{e2}) for $t\in [0,T]$. Let $u$ be a positive solution to (\ref{e1}) in $M$ such that $u^{a}\leq k$ for some positive constant $k$. Suppose that there exist constants $k_{1}, k_{2}, k_{3}, k_{4}$ such that
\begin{equation*}
Ric_{\phi}^{m-n}\geq -(m-1)k_{1}g,\qquad -k_{2}g\leq h\leq k_{3}g,\qquad |\nabla h|\leq k_{4},
\end{equation*}
on $M$. Then for any $\alpha>1$ and $\delta\in(0,1)$,  there exist positive constants $c_{0},c_{1},$ and $c_{2}$ such that
  \begin{equation}\label{ec1}
\frac{|\nabla u|^{2}}{u^{2}}-\alpha qu^{a}-\alpha p\frac{  A(u)}{u}-\alpha \frac{u_{t}}{u}\leq
 \frac{m\alpha^{2}}{2t(1-\epsilon\alpha)}+K_{2}
\end{equation}
on $M$,  where $\epsilon \in (0,\frac{1}{\alpha})$,
\begin{eqnarray*}
K_{2}&:=&
\frac{m\alpha^{2}}{2(1-\epsilon\alpha)}\Big(\Gamma_{1}\Lambda_{2}+a\Sigma_{1}k +c_{2}k_{2} \Big)
+\Big(\frac{m\alpha^{2}}{2(1-\epsilon\alpha)}E_{2} \Big)^{\frac{1}{2}},
\end{eqnarray*}
\begin{eqnarray*}\nonumber
E_{2}&:=&
\frac{3}{4}
\Big(\frac{2m\alpha^{2}}{(1-\epsilon\alpha)\delta (\alpha-1)^{2}}\Big)^{\frac{1}{3}} [\alpha(a+1)-1]^{\frac{4}{3}}k^{\frac{4}{3}}\Sigma_{2}^{\frac{4}{3}}\\\nonumber&&
+ \frac{3}{4}
\Big(\frac{m\alpha^{2}}{2(1-\epsilon\alpha)(1-\delta) (\alpha-1)^{2}}\Big)^{\frac{1}{3}} \alpha^{\frac{4}{3}}\Theta_{2}^{\frac{4}{3}}+\alpha k\Sigma_{3}
\\\nonumber&&
+\frac{3}{4}t_{0}^{2}\Big(\frac{m\alpha^{2}}{2(1-\epsilon\alpha)(1-\delta) (\alpha-1)^{2}}\Big)^{\frac{1}{3}} \big( (\alpha-1)\Lambda_{1} +\alpha\Lambda_{2}\big)^{\frac{4}{3}}\Gamma_{2}^{\frac{4}{3}}+\alpha t_{0}^{2}\Lambda_{1} \Gamma_{3}
\\&&
+ \frac{m\alpha^{2}}{4(1-\epsilon\alpha)(1-\delta)(\alpha-1)^{2}}C_{2}^{2}
\\\nonumber&&
+\frac{\alpha  n}{2\epsilon}(k_{2}+k_{3})^{2}
+\frac{9}{8}n\alpha^{2}k_{4}+2\alpha  k_{2}\epsilon \Theta_{1}^{2},
\end{eqnarray*}
and
\begin{eqnarray*}
C_{2}
&:=&(\alpha-1)( \Gamma_{1}\Lambda_{2}+2a\Sigma_{1} k +2k_{3})+\alpha \Gamma_{1} \Lambda_{3}+2(1-\epsilon\alpha)(m-1)k_{1}\\&&+\frac{\alpha k_{2}}{2\epsilon}+\alpha a^{2}\Sigma_{1} k+2k_{4}+a\Sigma_{2}k.
\end{eqnarray*}
\end{corollary}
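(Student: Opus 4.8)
The plan is to deduce the global estimate (\ref{ec1}) from the local estimate of Theorem~\ref{t1} by letting the cutoff radius $R$ tend to infinity. Fix $x_{0}\in M$, $\alpha>1$, $\delta\in(0,1)$ and $\epsilon\in(0,1/\alpha)$. Since the bounds $Ric_{\phi}^{m-n}\ge -(m-1)k_{1}g$, $-k_{2}g\le h\le k_{3}g$ and $|\nabla h|\le k_{4}$ are assumed on all of $M$, they hold in particular on each compact set $Q_{2R,T}$, and $u$ is a positive solution of (\ref{e1}) on $Q_{2R,T}$ with $u^{a}\le k$. Hence Theorem~\ref{t1} applies for every $R\ge 1$ and gives, at every $(x,t)$ with $d(x,x_{0},t)\le R$ and $t\in(0,T]$,
\[
\frac{|\nabla u|^{2}}{u^{2}}-\alpha qu^{a}-\alpha p\frac{A(u)}{u}-\alpha\frac{u_{t}}{u}\le \frac{m\alpha^{2}}{2t(1-\epsilon\alpha)}+K(R),
\]
where $K(R)$ is the quantity $K$ of Theorem~\ref{t1}, formed with the local sup-constants $\lambda_{i},\gamma_{i},\sigma_{i},\theta_{i}$ taken over $Q_{2R,T}$ together with the three explicitly $R$-dependent terms.

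Next I would pass from the local to the global sup-constants. Because $Q_{2R,T}\subset M\times[0,T]$, one has $\lambda_{i}\le\Lambda_{i}$, $\gamma_{i}\le\Gamma_{i}$, $\sigma_{i}\le\Sigma_{i}$ and $\theta_{i}\le\Theta_{i}$ for $i=1,2,3$; since every coefficient multiplying these constants in $C$, $E$, $K$ is nonnegative, this yields $C\le C_{2}$, $C^{2}\le C_{2}^{2}$, $E\le E_{2}$ (the expression $E_{2}$ being exactly the one obtained from $E$ by replacing each lower-case sup-constant with its upper-case counterpart), and therefore $K(R)\le \widetilde K(R)$, where $\widetilde K(R)$ is the same as $K_{2}$ except that it still carries the three extra terms $\frac{m\alpha^{2}c_{1}}{4(1-\epsilon\alpha)(\alpha-1)R^{2}}$, $\frac{c_{0}}{R}(m-1)(\sqrt{k_{1}}+\frac{2}{R})$ and $\frac{3c_{1}}{R^{2}}$ inside the first parenthesis. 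Substituting these monotone bounds only enlarges the right-hand side, so the displayed inequality holds with $K(R)$ replaced by $\widetilde K(R)$.

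Finally I would let $R\to\infty$ with $(x,t)$ fixed. The constants $c_{0},c_{1},c_{2}$ arising from the cutoff function $\psi$ and from \cite[p.~494]{JS} in the proof of Theorem~\ref{t1} are absolute, i.e.\ independent of $R$, and the geometric constants $k_{1},k_{2},k_{3},k_{4}$ do not depend on $R$ either; hence the three $R$-dependent terms in $\widetilde K(R)$ tend to $0$, so $\widetilde K(R)\to K_{2}$, while $C_{2}$ and $E_{2}$ are already $R$-free. This produces (\ref{ec1}) at the fixed point $(x,t)$, and since $x_{0}$, $(x,t)$, $\alpha$, $\delta$, $\epsilon$ were arbitrary, the corollary follows.

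The main obstacle is not conceptual but one of bookkeeping: one has to verify that the localization in the proof of Theorem~\ref{t1} genuinely produces an estimate uniform in $R$ — that no hidden $R$-dependence slips into $c_{0},c_{1},c_{2}$ — and that each summand of $E$ and of $K$ is dominated term by term by its global analogue, which requires the finiteness of $\Lambda_{i},\Gamma_{i},\Sigma_{i},\Theta_{i}$; this finiteness is built into the standing hypotheses, since these quantities are defined as suprema over $M\times[0,T]$ and are assumed finite. Granted this, the passage $R\to\infty$ is routine.
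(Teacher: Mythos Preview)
Your proposal is correct and follows exactly the paper's own approach: the paper's proof consists of noting that $(M,g(t))$ remains complete (since $g(t)$ is uniformly equivalent to $g(0)$) and then letting $R\to+\infty$ in the local estimate of Theorem~\ref{t1}, so that the $R$-dependent terms vanish and the local constants $\lambda_i,\gamma_i,\sigma_i,\theta_i$ are replaced by their global counterparts $\Lambda_i,\Gamma_i,\Sigma_i,\Theta_i$. Your write-up is in fact more detailed than the paper's, which compresses the entire argument into two sentences.
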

\begin{proof}
Since $g(t) $ is uniformly equivalent to the initial metric $g(0)$, then $(M,g(t))$ is complete noncompact for $t\in[0,T]$.  For fixe $\delta \in (0,1)$, if $R\to +\infty$ in (\ref{10}) then we obtain inequality  (\ref{ec1}).
\end{proof}
As immediate consequence of the global gradient estimates obtained in Corollary \ref{c1}, by integrating the gradient estimates in space-time we obtain the following Harnack inequality. We first introduce the following notation.  Given $(y_{1},s_{1})\in M\times (0,T]$ and $(y_{2},s_{2})\in M\times (0,T]$ satisfying $s_{1}<s_{2}$, define
\begin{equation*}
\mathcal{J}(y_{1},s_{1},y_{2},s_{2})=\inf\int_{s_{1}}^{s_{2}}|{\zeta}'(t)|_{g(t)}^{2}dt,
\end{equation*}
and the infimum  is taken over the all smooth curves  $\zeta:[s_{1},s_{2}]\to M$ jointing $y_{1}$ and $y_{2}$.
\begin{corollary}\label{c2}
With the same assumptions in Corollary \ref{c2},  for $(y_{1},s_{1})\in M\times (0,T]$ and $(y_{2},s_{2})\in M\times (0,T]$ such that $s_{1}<s_{2}$,   we have
\begin{eqnarray*}
u(y_{1},s_{1})&\leq& u(y_{2},s_{2})\big( \frac{s_{2}}{s_{1}}\big)^{ \frac{m\alpha}{2(1-\epsilon\alpha)}}\exp\left\{ \frac{\alpha\mathcal{J}(y_{1},s_{1},y_{2},s_{2})}{4}\right.\\&&\left.+(s_{2}-s_{1})\big(  k \Sigma_{1}+ \Gamma_{1}\Lambda_{1}+\frac{1}{\alpha} K_{2}\big)\right\}.
\end{eqnarray*}
\end{corollary}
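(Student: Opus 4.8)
The plan is to integrate the global Li--Yau estimate of Corollary~\ref{c1} along a space-time curve, in the spirit of Li--Yau. Set $f=\ln u$, so that $|\nabla f|^2=|\nabla u|^2/u^2$, $f_t=u_t/u$, $e^{af}=u^a$ and $\hat A(f)=A(u)/u$; then (\ref{ec1}) rearranges to
\[
\alpha f_t\ \ge\ |\nabla f|^2-\alpha q e^{af}-\alpha p\hat A(f)-\frac{m\alpha^2}{2t(1-\epsilon\alpha)}-K_2
\qquad\text{on }M\times(0,T].
\]

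Fix $(y_1,s_1),(y_2,s_2)\in M\times(0,T]$ with $s_1<s_2$ and let $\zeta:[s_1,s_2]\to M$ be any smooth curve with $\zeta(s_1)=y_1$ and $\zeta(s_2)=y_2$ (such curves exist since $M$ is connected). Put $\ell(t)=\ln u(\zeta(t),t)$. Differentiating gives $\ell'(t)=\langle\nabla f,\zeta'(t)\rangle+f_t$, and inserting the displayed inequality together with $\langle\nabla f,\zeta'(t)\rangle\ge-|\nabla f|\,|\zeta'(t)|_{g(t)}$ and the elementary bound $\frac1\alpha|\nabla f|^2-|\nabla f|\,|\zeta'(t)|_{g(t)}\ge-\frac{\alpha}{4}|\zeta'(t)|_{g(t)}^2$, the $|\nabla f|^2$ terms cancel and one is left with an estimate for $\ell'(t)$ involving only $|\zeta'(t)|_{g(t)}^2$, the terms $-q e^{af}$ and $-p\hat A(f)$, and the explicit time-dependent term. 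The zeroth-order terms are controlled using $u^a\le k$ and the global sup bounds: $-q e^{af}\ge-|q|\,u^a\ge-k\Sigma_1$ and $-p\hat A(f)\ge-|p|\,|\hat A|\ge-\Gamma_1\Lambda_1$, so that
\[
\ell'(t)\ \ge\ -\frac{\alpha}{4}\,|\zeta'(t)|_{g(t)}^2-k\Sigma_1-\Gamma_1\Lambda_1-\frac{m\alpha}{2t(1-\epsilon\alpha)}-\frac{K_2}{\alpha}.
\]

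Integrating from $s_1$ to $s_2$ and using $\int_{s_1}^{s_2}dt/t=\ln(s_2/s_1)$ (finite since $s_1>0$) yields an upper bound for $\ln\bigl(u(y_1,s_1)/u(y_2,s_2)\bigr)$ in which every term except $\frac{\alpha}{4}\int_{s_1}^{s_2}|\zeta'(t)|_{g(t)}^2\,dt$ is independent of $\zeta$. Taking the infimum over all admissible curves replaces that integral by $\frac{\alpha}{4}\mathcal J(y_1,s_1,y_2,s_2)$, and exponentiating gives exactly the claimed inequality.

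The computation is entirely routine; the only point that demands a moment's attention is the weight in Young's inequality, $\frac1\alpha x^2-xy\ge-\frac{\alpha}{4}y^2$, which is precisely what makes the gradient contributions cancel and produces the coefficient $\alpha/4$ in front of $\mathcal J$. There is no genuine obstacle here beyond careful bookkeeping of signs and the use of the uniform bounds $k,\Sigma_1,\Gamma_1,\Lambda_1$ when discarding the reaction terms.
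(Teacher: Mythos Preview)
Your argument is correct and follows essentially the same route as the paper: integrate the global Li--Yau inequality of Corollary~\ref{c1} along a space-time curve, absorb the gradient cross-term via the quadratic bound $\tfrac{1}{\alpha}|\nabla f|^{2}-|\nabla f|\,|\zeta'|\ge -\tfrac{\alpha}{4}|\zeta'|^{2}$, bound the reaction terms by $k\Sigma_{1}+\Gamma_{1}\Lambda_{1}$, integrate $1/t$, and then take the infimum over curves before exponentiating. The only cosmetic difference is that the paper completes the square directly on the inner product $-\tfrac{1}{\alpha}|\nabla f|^{2}-\langle\nabla f,\zeta'\rangle$ rather than first invoking Cauchy--Schwarz, which of course yields the same $\tfrac{\alpha}{4}|\zeta'|^{2}$.
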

\begin{proof}
Take the geodesic path $\zeta(t)$ from $y_{1}$ to $y_{2}$ with $\zeta(s_{1})=y_{1}$ and $\zeta(s_{2})=y_{2}$. Now consider the path $(\zeta(t),t)$ in space-time. From Corollary \ref{c1}, we have the following gradient estimate
  \begin{equation}\label{e1c2}
-\partial_{t}(\ln u)\leq  k \Sigma_{1}+ \Gamma_{1}\Lambda_{1}-\frac{1}{\alpha} |\nabla (\ln u)|^{2}+
 \frac{m\alpha}{2t(1-\epsilon\alpha)}+\frac{1}{\alpha} K_{2}.
\end{equation}
Integrating this inequality along $\zeta$, we get
\begin{eqnarray*}
&&\log\frac{u(y_{1},s_{1})}{u(y_{2},s_{2})}\\
&&=-\int_{s_{1}}^{s_{2}}\frac{d}{dt}\big(\ln u(\zeta(t),t) \big)dt\\
&&=-\int_{s_{1}}^{s_{2}}\Big(\partial_{t}(\ln u)+\langle \nabla (\ln u)(\zeta(t),t) ,\dot{\zeta}(t)\rangle \Big)dt\\&&
\leq \int_{s_{1}}^{s_{2}}\left\{ k \Sigma_{1}+\Gamma_{1} \Lambda_{1}-\frac{1}{\alpha} |\nabla (\ln u)|^{2}+
 \frac{m\alpha}{2t(1-\epsilon\alpha)}+\frac{1}{\alpha} K_{2} -\langle \nabla (\ln u) ,\dot{\zeta}(t)\rangle\right\}dt\\&&
\leq\int_{s_{1}}^{s_{2}}\left\{ \frac{\alpha|\dot{\zeta}(t)|^{2}}{4}+\Big(  k \Sigma_{1}+ \Gamma_{1}\Lambda_{1}+
 \frac{m\alpha}{2t(1-\epsilon\alpha)}+\frac{1}{\alpha} K_{2}\Big)\right\}dt\\&&\leq
\frac{\alpha}{4}\int_{s_{1}}^{s_{2}}|\dot{\zeta}(t)|^{2}dt+(s_{2}-s_{1})\big(  k \Sigma_{1}+\Gamma_{1} \Lambda_{1}+\frac{1}{\alpha} K_{2}\big)+
 \frac{m\alpha}{2(1-\epsilon\alpha)}\ln\frac{s_{2}}{s_{1}},
\end{eqnarray*}
where in the computation above  we have used  (\ref{e1c2}) to obtain the inequality in the third line and used inequality $-\tilde{a}x^{2}-\tilde{b}x\leq \frac{{\tilde b}^{2}}{4\tilde{a}}$ to arrive at   the inequality in the fourth line. By exponentiation we have
\begin{equation*}
u(y_{1},s_{1})\leq u(y_{2},s_{2})\big( \frac{s_{2}}{s_{1}}\big)^{ \frac{m\alpha}{2(1-\epsilon\alpha)}}\exp\left\{ \frac{\alpha}{4}\int_{s_{1}}^{s_{2}}|\dot{\zeta}(t)|^{2}dt+(s_{2}-s_{1})\big(  k \Sigma_{1}+ \Gamma_{1}\Lambda_{1}+\frac{1}{\alpha} K_{2}\big)\right\}.
\end{equation*}
\end{proof}
 We now consider the case that the manifold $M$ is compact and without boundary, i.e., $M$ is closed, and we find  a global gradient estimate on a closed weighted Riemannian manifold.
\begin{theorem}\label{t2}
Let $(M,g(0),e^{-\phi_{0}}dv )$ be a closed  weighted Riemannian manifold, and let $g(t), \phi(t)$ evolve by (\ref{e2}) for $t\in [0,T]$. Let $u$ be a positive solution to (\ref{e1}) in $M$ such that $u^{a}\leq k$ for some positive constant $k$. Suppose that there exist constants $k_{1}, k_{2}, k_{3}, k_{4}$ such that
\begin{equation*}
Ric_{\phi}^{m-n}\geq -(m-1)k_{1}g,\qquad -k_{2}g\leq h\leq k_{3}g,\qquad |\nabla h|\leq k_{4},
\end{equation*}
on $M$. Then for any $\alpha>1$,  there exist positive constants $c_{0},c_{1},$ and $c_{2}$ such that
  \begin{equation}\label{10}
\frac{|\nabla u|^{2}}{u^{2}}-\alpha qu^{a}-\alpha p\frac{  A(u)}{u}-\alpha \frac{u_{t}}{u}\leq
 \frac{m\alpha^{2}}{2t}+K_{3}
\end{equation}
on $M$, where
\begin{eqnarray*}
K_{3}&:=&\frac{m\alpha^{2}}{2}\big(\Gamma_{1}\Lambda_{2}+ak\Sigma_{1} \big)+m\alpha^{2}(k_{2}+k_{3})\\&&+
\frac{m\alpha^{2}}{2(\alpha-1)}\Big((\alpha-1)\Gamma_{1}\Lambda_{2}+\alpha\Gamma_{1}\Lambda_{3}\Big)\\&&
+\frac{m\alpha^{2}}{2(\alpha-1)}\Big(2(m-1) k_{1}+\frac{\alpha k_{2}}{2\epsilon}+(\alpha-1)ak\Sigma_{1}+\alpha a^{2}k\Sigma_{1}+2(\alpha-1)k_{3} +1\Big)\\&&+
\sqrt{m k\Sigma_{3}}\alpha^{\frac{3}{2}}+\sqrt{2mk_{2} \Theta_{1}}\alpha^{\frac{3}{2}} \\&&+\frac{\sqrt{m}\alpha}{2}\Big(2[\alpha(a+1)-1]k\Sigma_{2}+3\alpha \sqrt{n}k_{4}+\alpha\Theta_{2}\Big).
\end{eqnarray*}

\end{theorem}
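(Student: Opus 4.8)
The plan is to run the same maximum-principle argument as in Theorem \ref{t1}, but now to exploit the compactness of $M$ in order to dispense entirely with the spatial cutoff function $\eta$, Calabi's trick, and the Laplacian comparison theorem. As before, set $f=\ln u$ and
$F:=t\big(|\nabla f|^{2}-\alpha qe^{af}-\alpha p\hat A(f)-\alpha f_{t}\big)$,
so that Lemma \ref{l2} supplies the differential inequality (\ref{007}) for $(\Delta_{\phi}-\partial_{t})F$ on $M\times[0,T]$, valid for any $\epsilon\in(0,\tfrac1\alpha)$.

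First I would fix $T_{1}\in(0,T]$ and note that, $M$ being closed, $F$ attains its maximum over the compact set $M\times[0,T_{1}]$ at some $(x_{0},t_{0})$. If $F(x_{0},t_{0})\le 0$ then (\ref{10}) holds trivially, so assume $F(x_{0},t_{0})>0$; then $t_{0}>0$ and at $(x_{0},t_{0})$ we have $\nabla F=0$, $\Delta_{\phi}F\le 0$, $\partial_{t}F\ge 0$, hence $(\Delta_{\phi}-\partial_{t})F\le 0$ there. Substituting $\nabla F=0$ into (\ref{007}) removes the term $-2\langle\nabla F,\nabla f\rangle$, and I would then introduce $\mu:=|\nabla f|^{2}(x_{0},t_{0})/F(x_{0},t_{0})\ge 0$ and use $|\nabla f|=\sqrt{\mu F}$ together with the identity $|\nabla f|^{2}-f_{t}-qe^{af}-p\hat A=\big(\mu-\tfrac{t_{0}\mu-1}{t_{0}\alpha}\big)F$, exactly as in the proof of Theorem \ref{t1}. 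Multiplying through by $t_{0}$ produces an inequality of the schematic form $0\ge \tfrac{2(1-\epsilon\alpha)}{m\alpha^{2}}\big(1+(\alpha-1)t_{0}\mu\big)^{2}F^{2}-F-(\text{error terms})$.

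Next I would bound every error term by suprema over $M\times[0,T]$: the hypotheses $Ric_{\phi}^{m-n}\ge-(m-1)k_{1}g$, $-k_{2}g\le h\le k_{3}g$, $|\nabla h|\le k_{4}$, the quantities $\Gamma_{i},\Lambda_{i},\Sigma_{i},\Theta_{i}$, the bound $u^{a}=e^{af}\le k$, and Cauchy--Schwarz/Young inequalities — e.g. $e^{af}|\nabla q||\nabla f|\le k\Sigma_{2}\sqrt{\mu F}$, $|\nabla f\cdot\nabla\Delta\phi|\le\Theta_{2}\sqrt{\mu F}$, the Hessian--$h$ estimate already recorded in (\ref{lygs}), and $3\alpha\sqrt n\,k_{4}|\nabla f|\le 2k_{4}|\nabla f|^{2}+\tfrac98 n\alpha^{2}k_{4}$. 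Expanding $\big(1+(\alpha-1)t_{0}\mu\big)^{2}=1+2(\alpha-1)t_{0}\mu+(\alpha-1)^{2}t_{0}^{2}\mu^{2}$, the $\mu F$ contributions are absorbed into the cross term and the $\sqrt{\mu F}$ contributions into the square term, leaving a genuine quadratic inequality $\tfrac{2(1-\epsilon\alpha)}{m\alpha^{2}}F^{2}-DF-E\le 0$ whose coefficients are dominated by the quantities assembled in $K_{3}$. Solving it via the elementary fact $\tilde a x^{2}-\tilde b x-\tilde c\le 0\Rightarrow x\le \tilde b/\tilde a+\sqrt{\tilde c/\tilde a}$, and then using $F(x,T_{1})/T_{1}=|\nabla f|^{2}-\alpha qe^{af}-\alpha p\hat A-\alpha f_{t}$ at points with $d(x,x_{0},T_{1})$ unrestricted (which is harmless since $M$ is closed), together with the arbitrariness of $T_{1}$, yields (\ref{10}).

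The main obstacle is not conceptual but organizational: carefully tracking the numerous reaction/potential error terms, choosing the Young-inequality weights so that all $\mu$-dependent terms are absorbed into $\big(1+(\alpha-1)t_{0}\mu\big)^{2}F^{2}$ with no residual $\mu$-dependence, and verifying that the resulting bound can be written in the stated closed form $K_{3}$ (and that the leading coefficient of $1/t$ takes the form claimed). I expect no compactness-specific difficulty beyond this bookkeeping, since closedness removes precisely the terms weighted by $R^{-1}$, $R^{-2}$ and $c_{0},c_{1},c_{2}$ that complicated the local estimate of Theorem \ref{t1}.
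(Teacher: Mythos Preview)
Your outline is essentially the route that produces Corollary~\ref{c1}, not Theorem~\ref{t2}. If you simply drop the cutoff $\eta$ from the proof of Theorem~\ref{t1}, introduce $\mu=|\nabla f|^{2}/F$, and solve the resulting quadratic $\tfrac{2(1-\epsilon\alpha)}{m\alpha^{2}}F^{2}-DF-E\le 0$, the leading term you obtain is $\dfrac{m\alpha^{2}}{2t(1-\epsilon\alpha)}$, with a remainder of the shape $K_{2}$. You do flag ``verifying that the leading coefficient of $1/t$ takes the form claimed,'' but nothing in your scheme forces the $(1-\epsilon\alpha)^{-1}$ to disappear, so as written the argument does not deliver the stated $\dfrac{m\alpha^{2}}{2t}+K_{3}$.

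The paper's proof is organized differently in three respects. First, it works with $\bar F:=F-K_{3}t$ and argues by contradiction: assuming $\bar F>\tfrac{m\alpha^{2}}{2}$ somewhere, it applies the maximum principle to $\bar F$ and eventually derives $\bar F(x_{1},t_{1})\le \tfrac{m\alpha^{2}}{2}$. Second, instead of the $\mu$-parametrization it uses the identity $|\nabla f|^{2}-f_{t}-qe^{af}-p\hat A=\tfrac{1}{\alpha}\tfrac{F}{t}+\tfrac{\alpha-1}{\alpha}|\nabla f|^{2}$ and then completes the square in $|\nabla f|^{2}$ via $\tilde a x^{2}-\tilde b x\ge -\tilde b^{2}/4\tilde a$, producing a quadratic in $F/t_{1}$ with the $|\nabla f|$-dependence already eliminated. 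Third, and this is the step your plan lacks, it makes a \emph{specific} choice of $\epsilon$ depending on $t_{1}$ and the bounds, namely
\[
\epsilon=\frac{t_{1}(k_{2}+k_{3})}{1+t_{1}\Gamma_{1}\Lambda_{2}+t_{1}ak\Sigma_{1}+2t_{1}(k_{2}+k_{3})}\cdot\frac{1}{\alpha},
\]
which is precisely what removes the factor $(1-\epsilon\alpha)^{-1}$ from the $1/t$ term and produces the additive $m\alpha^{2}(k_{2}+k_{3})$ in $K_{3}$. Without this adaptive choice of $\epsilon$ your bookkeeping cannot reproduce the claimed constants.
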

\begin{proof}
Let $f=\ln u$,  $F:=t\Big(|\nabla f|^{2}-\alpha qe^{af}-\alpha p{ \hat A}(f)-\alpha f_{t} \Big)$, and
\begin{eqnarray*}
\bar{F}(x,t):=F(x,t)-K_{3}t.
\end{eqnarray*}
For any $(x,t)\in M\times [0,T]$ if $\bar{F}(x,t)\leq \frac{m\alpha^{2}}{2}$ then  the result holds trivially. Thus, we consider  $\bar{F}(x,t)> \frac{m\alpha^{2}}{2}$ on $M\times [0,T]$.  Let $(x_{1},t_{1})$ be a point  in $M\times [0,T]$ at which $\bar{F}$ attains its maximum value. Then $\bar{F}(x_{1},t_{1})> \frac{m\alpha^{2}}{2}$. As $\bar{F}(x_{1},0)=0$ then $t_{1}>0$. Now using the maximum principle, we conclude that
\begin{equation*}
\nabla\bar{F}(x_{1},t_{1})=0,\qquad \Delta_{\phi} \bar{F}(x_{1},t_{1})\leq0,\qquad \partial_{t}\bar{F}(x_{1},t_{1})\geq0.
\end{equation*}
Therefore at point $(x_{1},t_{1})$ we deduce
\begin{equation*}
0\geq(\Delta_{\phi}-\partial_{t})\bar{F}\geq (\Delta_{\phi}-\partial_{t}){F}.
\end{equation*}
From Lemma \ref{l2} and identity
\begin{equation*}
|\nabla f|^{2}-qe^{af}-p\hat{A}-f_{t}=\frac{1}{\alpha}\frac{F}{t}+\frac{\alpha-1}{\alpha}|\nabla f|^{2}
\end{equation*}
we obtain
\begin{eqnarray*}
0&\geq& \frac{2(1-\epsilon \alpha)t_{1}}{m\alpha^{2}}\big(\frac{F}{t_{1}}\big)^{2}+\frac{4(1-\epsilon \alpha)(\alpha-1)t_{1}}{m\alpha^{2}}|\nabla f|^{2}\frac{F}{t_{1}}\\&&+ \frac{2(1-\epsilon \alpha)t_{1}}{m\alpha^{2}}(\alpha-1)^{2}t_{1}|\nabla f|^{4}-\frac{F}{t_{1}}-\Gamma_{1}\Lambda_{2}F-ak\Sigma_{1}F\\&&
-t_{1}\Big(2[\alpha(a+1)-1]k\Sigma_{2}+3\alpha \sqrt{n}k_{4}+\alpha\Theta_{2}\Big)|\nabla f|\\&&
-t_{1}\Big[(\alpha-1)\Gamma_{1}\Lambda_{2}+\alpha\Gamma_{1}\Lambda_{3}+2(1-\epsilon \alpha)(m-1)k_{1}+\frac{\alpha k_{2}}{2\epsilon}+(\alpha-1)ak\Sigma_{1}\\&&+\alpha a^{2}k\Sigma_{1}+2(\alpha-1)k_{3}
  \Big]|\nabla f|^{2}\\&&
-\alpha t_{1}k\Sigma_{3}-\frac{\alpha t_{1}n}{2\epsilon}(k_{2}+k_{3})^{2}-2\alpha t_{1}k_{2}\epsilon \Theta_{1}^{2}.
\end{eqnarray*}
Since
\begin{equation*}
\frac{F}{t_{1}}=\frac{\bar{F}}{t_{1}}+K_{3}>0
\end{equation*}
and
\begin{eqnarray*}
&&\Big(2[\alpha(a+1)-1]k\Sigma_{2}+3\alpha \sqrt{n}k_{4}+\alpha\Theta_{2}\Big)|\nabla f|\\&&\leq
|\nabla f|^{2}+\frac{1}{4}\Big(2[\alpha(a+1)-1]k\Sigma_{2}+3\alpha \sqrt{n}k_{4}+\alpha\Theta_{2}\Big)^{2},
\end{eqnarray*}
we get
\begin{eqnarray*}
0&\geq& \frac{2(1-\epsilon \alpha)t_{1}}{m\alpha^{2}}\big(\frac{F}{t_{1}}\big)^{2}-\big(1+t_{1}\Gamma_{1}\Lambda_{2}+t_{1}ak\Sigma_{1} \big)\frac{F}{t_{1}}
+ \frac{2(1-\epsilon \alpha)t_{1}}{m\alpha^{2}}(\alpha-1)^{2}t_{1}|\nabla f|^{4}\\&&
-t_{1}\Big[(\alpha-1)\Gamma_{1}\Lambda_{2}+\alpha\Gamma_{1}\Lambda_{3}+2(1-\epsilon \alpha)(m-1)k_{1}+\frac{\alpha k_{2}}{2\epsilon}\\&&+(\alpha-1)ak\Sigma_{1}+\alpha a^{2}k\Sigma_{1}+2(\alpha-1)k_{3}
  +1\Big]|\nabla f|^{2}\\&&
-\alpha t_{1}k\Sigma_{3}-\frac{\alpha t_{1}n}{2\epsilon}(k_{2}+k_{3})^{2}-2\alpha t_{1}k_{2}\epsilon \Theta_{1}^{2}\\&&-\frac{t_{1}}{4}\Big(2[\alpha(a+1)-1]k\Sigma_{2}+3\alpha \sqrt{n}k_{4}+\alpha\Theta_{2}\Big)^{2}.
\end{eqnarray*}
Set
\begin{eqnarray*}
B_{1}&:=&(\alpha-1)\Gamma_{1}\Lambda_{2}+\alpha\Gamma_{1}\Lambda_{3}+2(1-\epsilon \alpha)(m-1)k_{1}+\frac{\alpha k_{2}}{2\epsilon}\\&&+(\alpha-1)ak\Sigma_{1}+\alpha a^{2}k\Sigma_{1}+2(\alpha-1)k_{3}
  +1,
\end{eqnarray*}
and
\begin{eqnarray*}
B_{2}&:=&
\alpha k\Sigma_{3}+\frac{\alpha n}{2\epsilon}(k_{2}+k_{3})^{2}+2\alpha k_{2}\epsilon \Theta_{1}^{2}\\&&+\frac{1}{4}\Big(2[\alpha(a+1)-1]k\Sigma_{2}+3\alpha \sqrt{n}k_{4}+\alpha\Theta_{2}\Big)^{2}.
\end{eqnarray*}
Using the inequality $\tilde{a}x^{2}-\tilde{b}x\geq-\frac{{\tilde b}^{2}}{4\tilde{a}}$ holds for $\tilde{a}>0$ and $\tilde{b}\geq0$, we obtain
\begin{eqnarray*}
0&\geq& \frac{2(1-\epsilon \alpha)t_{1}}{m\alpha^{2}}\big(\frac{F}{t_{1}}\big)^{2}-\big(1+t_{1}\Gamma_{1}\Lambda_{2}+t_{1}ak\Sigma_{1} \big)\frac{F}{t_{1}}\\&&
- \frac{t_{1}m\alpha^{2}}{8(1-\epsilon \alpha)(\alpha-1)^{2}}B_{1}^{2}-t_{1}B_{2}.
\end{eqnarray*}
The inequality $\tilde{a}x^{2}-\tilde{b}x-\tilde{c}<0$ implies that $x\leq \frac{1}{2\tilde{a}}(\tilde{b}+\sqrt{{\tilde b}^{2}+4\tilde{a}\tilde{c}})$ holds  for a positive constant $\tilde{a}$ and two nonnegative constants $\tilde{b},\tilde{c}$. Then,
\begin{eqnarray*}
\frac{F}{t_{1}}&\leq&\frac{m\alpha^{2}}{4(1-\epsilon \alpha)t_{1}}\left\{1+t_{1}\Gamma_{1}\Lambda_{2}+t_{1}ak\Sigma_{1}
 +\left[\big(1+t_{1}\Gamma_{1}\Lambda_{2}+t_{1}ak\Sigma_{1} \big)^{2}\right.\right.
\\&&\left.\left.+\frac{8(1-\epsilon \alpha)t_{1}^{2}}{m\alpha^{2}}\Big(
\frac{m\alpha^{2}}{8(1-\epsilon \alpha)(\alpha-1)^{2}}B_{1}^{2}+B_{2}\Big)\right]^{\frac{1}{2}}\right\}.
\end{eqnarray*}
The inequality $\sqrt{x+y}\leq\sqrt{x}+\sqrt{y}$ for $x,y\geq0$ implies that
\begin{eqnarray*}
\frac{F}{t_{1}}&\leq&\frac{m\alpha^{2}}{4(1-\epsilon \alpha)t_{1}}\left\{1+t_{1}\Gamma_{1}\Lambda_{2}+t_{1}ak\Sigma_{1}
 +\left[\big(1+t_{1}\Gamma_{1}\Lambda_{2}+t_{1}ak\Sigma_{1} \big)^{2}\right.\right.\\&&\left.\left.
+\frac{4n(1-\epsilon \alpha)t_{1}^{2}}{m\alpha\epsilon}(k_{2}+k_{3})^{2}\right]^{\frac{1}{2}}\right\}\\&&+
\frac{m\alpha^{2}}{4(1-\epsilon \alpha)t_{1}}\left[
\frac{8(1-\epsilon \alpha)t_{1}^{2}}{m\alpha^{2}}\Big(
\frac{m\alpha^{2}}{8(1-\epsilon \alpha)(\alpha-1)^{2}}B_{1}^{2}+B_{3}\Big)\right]^{\frac{1}{2}},
\end{eqnarray*}
where $B_{3}=B_{2}-\frac{\alpha n}{2\epsilon}(k_{2}+k_{3})^{2}$. Let
$$
\epsilon=\frac{t_{1}(k_{2}+k_{3})}{1+t_{1}\Gamma_{1}\Lambda_{2}+t_{1}ak\Sigma_{1}+2t_{1}(k_{2}+k_{3})}\frac{1}{\alpha}
$$
for $k_{2}+k_{3}\neq0$. If $k_{2}+k_{3}=0$ the we can choose $\epsilon=0$. Therefore
\begin{eqnarray*}
&&\frac{m\alpha^{2}}{4(1-\epsilon \alpha)t_{1}}\left\{1+t_{1}\Gamma_{1}\Lambda_{2}+t_{1}ak\Sigma_{1}
 +\left[\big(1+t_{1}\Gamma_{1}\Lambda_{2}+t_{1}ak\Sigma_{1} \big)^{2}\right.\right.\\&&\left.\left.
+\frac{4n(1-\epsilon \alpha)t_{1}^{2}}{m\alpha\epsilon}(k_{2}+k_{3})^{2}\right]^{\frac{1}{2}}\right\}\\&&\leq \frac{m\alpha^{2}}{2t_{1}}\big(1+t_{1}\Gamma_{1}\Lambda_{2}+t_{1}ak\Sigma_{1} \big)+m\alpha^{2}(k_{2}+k_{3}),
\end{eqnarray*}
and
\begin{equation*}
\frac{m\alpha^{2}}{4(1-\epsilon \alpha)t_{1}}\left[
\frac{8(1-\epsilon \alpha)t_{1}^{2}}{m\alpha^{2}}\right]^{\frac{1}{2}}\leq \sqrt{m}\alpha.
\end{equation*}
Hence  inequality $\sqrt{x+y}\leq\sqrt{x}+\sqrt{y}$ for $x,y\geq0$ implies that
\begin{eqnarray*}
\frac{F}{t_{1}}&\leq&\frac{m\alpha^{2}}{2t_{1}}\big(1+t_{1}\Gamma_{1}\Lambda_{2}+t_{1}ak\Sigma_{1} \big)+m\alpha^{2}(k_{2}+k_{3})+
\frac{m\alpha^{2}}{2(\alpha-1)}B_{4}+B_{5},
\end{eqnarray*}
where
\begin{eqnarray*}
B_{4}&:=&(\alpha-1)\Gamma_{1}\Lambda_{2}+\alpha\Gamma_{1}\Lambda_{3}+2(m-1)k_{1}+\frac{\alpha k_{2}}{2\epsilon}+(\alpha-1)ak\Sigma_{1}+\alpha a^{2}k\Sigma_{1}\\&&+2(\alpha-1)k_{3}
  +1,
\end{eqnarray*}
and
\begin{eqnarray*}
B_{5}&:=&
\sqrt{m k\Sigma_{3}}\alpha^{\frac{3}{2}}+\sqrt{2mk_{2} \Theta_{1}}\alpha^{\frac{3}{2}} \\&&+\frac{\sqrt{m}\alpha}{2}\Big(2[\alpha(a+1)-1]k\Sigma_{2}+3\alpha \sqrt{n}k_{4}+\alpha\Theta_{2}\Big).
\end{eqnarray*}
This implies that  $\bar{F}(x_{1},t_{1})\leq \frac{m\alpha^{2}}{2}$ and this is  a contradiction. Thus the proof of  the theorem is complete.
\end{proof}
Similar to Corollary \ref{c2},  integrating the gradient estimates obtained in Theorem \ref{t2} in space-time we obtain the following Harnack inequality.
\begin{corollary}\label{c3}
With the same assumptions in Theorem \ref{t2},  for $(y_{1},s_{1})\in M\times (0,T]$ and $(y_{2},s_{2})\in M\times (0,T]$ such that $s_{1}<s_{2}$,   we have
\begin{equation*}
u(y_{1},s_{1})\leq u(y_{2},s_{2})\big( \frac{s_{2}}{s_{1}}\big)^{ \frac{m\alpha}{2}}\exp\left\{ \frac{\alpha\mathcal{J}(y_{1},s_{1},y_{2},s_{2})}{4}+(s_{2}-s_{1})\big(  k \Sigma_{1}+ \Gamma_{1}\Lambda_{1}+\frac{1}{\alpha} K_{3}\big)\right\}.
\end{equation*}
\end{corollary}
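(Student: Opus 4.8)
The plan is to integrate the global gradient estimate of Theorem~\ref{t2} along a space--time path, following verbatim the argument used in the proof of Corollary~\ref{c2}. First I would put the estimate of Theorem~\ref{t2} into logarithmic form: dividing by $\alpha$ and rearranging, and using the hypotheses $u^{a}\le k$, $|q|\le \Sigma_{1}$, $|p\hat A|\le \Gamma_{1}\Lambda_{1}$ on $M\times[0,T]$, one gets
\[
-\partial_{t}(\ln u)\;\le\; k\Sigma_{1}+\Gamma_{1}\Lambda_{1}-\frac{1}{\alpha}|\nabla(\ln u)|^{2}+\frac{m\alpha}{2t}+\frac{1}{\alpha}K_{3}
\]
on $M\times(0,T]$ (the analogue of \eqref{e1c2}, now without the $(1-\epsilon\alpha)$ factor since Theorem~\ref{t2} gives the sharper bound $\frac{m\alpha^{2}}{2t}$).

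Next, fix $(y_{1},s_{1})$ and $(y_{2},s_{2})$ with $s_{1}<s_{2}$ and let $\zeta\colon[s_{1},s_{2}]\to M$ be any smooth curve with $\zeta(s_{1})=y_{1}$, $\zeta(s_{2})=y_{2}$; since $g(t)$ is uniformly equivalent to $g(0)$ (as recorded in the proof of Theorem~\ref{t1}), $M$ stays complete and such curves of finite energy exist. Along the space--time path $(\zeta(t),t)$ one writes
\[
\log\frac{u(y_{1},s_{1})}{u(y_{2},s_{2})}=-\int_{s_{1}}^{s_{2}}\frac{d}{dt}\bigl(\ln u(\zeta(t),t)\bigr)\,dt=-\int_{s_{1}}^{s_{2}}\Bigl(\partial_{t}(\ln u)+\langle\nabla(\ln u),\dot\zeta(t)\rangle\Bigr)\,dt,
\]
then inserts the logarithmic estimate and absorbs the gradient term via Cauchy--Schwarz together with $-\tilde a x^{2}-\tilde b x\le\frac{\tilde b^{2}}{4\tilde a}$ (with $\tilde a=\tfrac1\alpha$, $\tilde b=|\dot\zeta(t)|_{g(t)}$), obtaining
\[
\log\frac{u(y_{1},s_{1})}{u(y_{2},s_{2})}\le\frac{\alpha}{4}\int_{s_{1}}^{s_{2}}|\dot\zeta(t)|_{g(t)}^{2}\,dt+(s_{2}-s_{1})\Bigl(k\Sigma_{1}+\Gamma_{1}\Lambda_{1}+\tfrac1\alpha K_{3}\Bigr)+\frac{m\alpha}{2}\ln\frac{s_{2}}{s_{1}}.
\]
Taking the infimum over all admissible $\zeta$ replaces the first integral by $\mathcal{J}(y_{1},s_{1},y_{2},s_{2})$, and exponentiating gives the asserted Harnack inequality.

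There is no essential new obstacle here: the computation is a line-by-line adaptation of Corollary~\ref{c2} with $K_{2}$ replaced by $K_{3}$ and the exponent $\frac{m\alpha}{2(1-\epsilon\alpha)}$ replaced by $\frac{m\alpha}{2}$. The only mildly delicate points — using the time-dependent metric $g(t)$ in each slice when measuring $|\dot\zeta(t)|$, and invoking the uniform equivalence of the metrics so that $\mathcal{J}$ is finite — are exactly those already dispatched in the proofs of Theorem~\ref{t2} and Corollary~\ref{c2}, so I would simply cite them rather than repeat the argument.
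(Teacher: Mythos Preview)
Your proposal is correct and mirrors exactly the approach the paper indicates: the paper gives no separate proof of Corollary~\ref{c3} but simply notes it follows ``similar to Corollary~\ref{c2}'' by integrating the estimate of Theorem~\ref{t2} in space--time, which is precisely what you do. The only minor remark is that in Theorem~\ref{t2} the manifold $M$ is closed rather than complete noncompact, so the existence of finite-energy curves is automatic and you need not invoke the uniform equivalence of the metrics for that purpose.
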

\section{Hamilton type gradient estimates}
Now we prove Hamilton type gradient estimates  for (\ref{e1})-(\ref{e2}).
\begin{theorem}\label{t3}
Let $(M,g(0),e^{-\phi_{0}}dv )$ be a complete weighted Riemannian manifold, and let $g(t), \phi(t)$ evolve by (\ref{e2}) for $t\in [0,T]$. Given $x_{0}$ and $R>0$, let $u$ be a positive solution to (\ref{e1}) in $Q_{2R,T}$ such that $\tilde{k}^{3}\leq u\leq k^{3}$ for some positive constants $k$ and $\tilde{k}$. Suppose that there exist constants $k_{1}, k_{2}$ such that
\begin{equation*}
Ric_{\phi}\geq -(n-1)k_{1}g,\qquad h\geq -k_{2}g,
\end{equation*}
on $Q_{2R,T}$. Then   there exist positive constants $c_{0},c_{1},$ and $c_{2}$ such that
\begin{eqnarray*}
\frac{|\nabla u|}{\sqrt{u}}&\leq&\frac{3}{\sqrt{\tilde k}}\Big\{
3\frac{\sqrt{c_{1}}}{R}+3^{\frac{1}{4}}(\frac{1}{4})^{\frac{1}{3}}\Big(
\frac{2}{3}k^{4}\lambda_{1}\gamma_{2}+\frac{2}{3}k^{3a+4}\sigma_{2}
\Big)^{\frac{1}{3}}\\&&+\frac{1}{\sqrt{2}}\Big[  2(n-1)k_{1} k^{2}+\gamma_{1}\lambda_{1}k^{2}+\frac{2}{3}\gamma_{1}\lambda_{2}k^{3}
+2k_{2}k+(2a+1)k^{3a+2}\sigma_{1}\\&&
+\Big(\frac{c_{0}}{R}(n-1)(\sqrt{k_{1}}+\frac{2}{R})+\frac{3c_{1}}{R^{2}}+c_{2}k_{2} \Big)k^{2}
\Big]^{\frac{1}{2}}\Big\}.
\end{eqnarray*}
\end{theorem}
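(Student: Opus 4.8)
The hypothesis $\tilde k^{3}\le u\le k^{3}$ makes the substitution $v:=u^{1/3}$ natural: then $\tilde k\le v\le k$ on $Q_{2R,T}$, and since $\frac{|\nabla u|}{\sqrt u}=3\sqrt{v}\,|\nabla v|$ it suffices to bound $w:=|\nabla v|^{2}$. From $u=v^{3}$ one computes $\Delta_{\phi}u=3v^{2}\Delta_{\phi}v+6v|\nabla v|^{2}$ and $\partial_{t}u=3v^{2}\partial_{t}v$, so \eqref{e1} turns into
\begin{equation*}
(\Delta_{\phi}-\partial_{t})v=\tfrac{1}{3}q\,v^{3a+1}+\tfrac{1}{3}v^{-2}p\,A(v^{3})-\tfrac{2}{v}|\nabla v|^{2}.
\end{equation*}
The decisive feature is the damping term $-2|\nabla v|^{2}/v$, which after one differentiation produces a coercive quadratic term in $w$.

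The first step is to compute $(\Delta_{\phi}-\partial_{t})w$ from the weighted Bochner formula \eqref{e4} and Lemma \ref{l1}; the terms $\pm 2\langle\nabla v,\nabla v_{t}\rangle$ cancel, leaving
\begin{equation*}
(\Delta_{\phi}-\partial_{t})w=2|{\rm Hess}\,v|^{2}+2(Ric_{\phi}+h)(\nabla v,\nabla v)+2\big\langle\nabla\big((\Delta_{\phi}-\partial_{t})v\big),\nabla v\big\rangle .
\end{equation*}
Expanding the last bracket with the equation for $v$, the damping term contributes $-\tfrac{4}{v}\langle\nabla w,\nabla v\rangle+\tfrac{4}{v^{2}}w^{2}$, and $\tfrac{4}{v^{2}}w^{2}\ge\tfrac{4}{k^{2}}w^{2}$ is the term that makes the estimate close. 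Using $Ric_{\phi}\ge-(n-1)k_{1}g$, $h\ge-k_{2}g$, discarding $2|{\rm Hess}\,v|^{2}\ge0$, and bounding the remaining terms with $\tilde k\le v\le k$ and the constants $\sigma_{1},\sigma_{2},\gamma_{1},\gamma_{2},\lambda_{1},\lambda_{2}$, I would reach a differential inequality of the schematic shape
\begin{equation*}
(\Delta_{\phi}-\partial_{t})w\ \ge\ \tfrac{4}{k^{2}}w^{2}-\tfrac{4}{v}\langle\nabla w,\nabla v\rangle-C_{5}\,w-C_{6}\sqrt{w},
\end{equation*}
where $C_{5}$ gathers the $(n-1)k_{1}$, $k_{2}$, $\gamma_{1}\lambda_{1}$, $\gamma_{1}\lambda_{2}$ and $\sigma_{1}$ data and $C_{6}$ the $\sigma_{2}$ and $\lambda_{1}\gamma_{2}$ data (the linear terms $\langle\nabla q,\nabla v\rangle$, $\langle\nabla p,\nabla v\rangle$).

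Next I would localise exactly as in the proof of Theorem \ref{t1}: take the cutoff $\eta=\psi(r(x,t)/R)$ with $\Delta_{\phi}\eta\ge-\tfrac{c_{0}}{R}(n-1)(\sqrt{k_{1}}+\tfrac{2}{R})-\tfrac{c_{1}}{R^{2}}$, $|\nabla\eta|^{2}/\eta\le c_{1}/R^{2}$, and $\eta_{t}\le c_{2}k_{2}$ (from $h\ge-k_{2}g$, cf.\ \cite{JS}), put $G:=\eta w$, and work at a maximum point $(x_{0},t_{0})$ of $G$ on $Q_{2R,T}$, where $\nabla w=-\tfrac{w}{\eta}\nabla\eta$ and $0\ge(\Delta_{\phi}-\partial_{t})G=w(\Delta_{\phi}-\partial_{t})\eta+\eta(\Delta_{\phi}-\partial_{t})w+2\langle\nabla\eta,\nabla w\rangle$. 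Substituting the inequality above, the cutoff cross terms $2\langle\nabla\eta,\nabla w\rangle$ and $-\tfrac{4}{v}\langle\nabla w,\nabla v\rangle=\tfrac{4w}{v\eta}\langle\nabla\eta,\nabla v\rangle$ are handled by Young's inequality (the latter contributing a $\tfrac{\sqrt{c_{1}}}{R}G^{3/2}$-type term, absorbed into $\tfrac{4}{k^{2}}G^{2}$), and $C_{6}\sqrt{w}$ is absorbed against $\tfrac{4}{k^{2}}G^{2}$ by Young's inequality with exponents $4$ and $\tfrac{4}{3}$ — this is what produces the cube-root term in the statement. One is then left with a quadratic inequality $\tfrac{4}{k^{2}}G^{2}\le \tilde b\,G+\tilde c$; solving it via $x\le\tilde b/\tilde a+\sqrt{\tilde c/\tilde a}$, restricting to $Q_{R,T}$ where $\eta\equiv1$, converting back through $\frac{|\nabla u|}{\sqrt u}=3\sqrt{v}\,|\nabla v|$ together with $\tilde k\le v\le k$, and using $\sqrt{x+y+z}\le\sqrt x+\sqrt y+\sqrt z$, yields the three displayed terms.

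The Bochner computation and the Young splittings are routine; the genuinely delicate part is the bookkeeping — keeping the coercive coefficient $4/k^{2}$ intact in front of $G^{2}$ while tracking the many powers of $k$ (coming from $v^{3a}$, $v^{3a+1}$, $A(v^{3})/v^{2}$, and the conversion factor $\sqrt{v}$) and the lone $\tilde k$ (from $1/v$ in the cross term), and choosing the Young parameters so that the final constants match those stated.
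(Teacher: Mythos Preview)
Your outline is sound in spirit but uses the wrong auxiliary function, and this is not merely cosmetic: the stated constants are tied to the paper's choice $H:=v|\nabla v|^{2}$, not $w:=|\nabla v|^{2}$. With $H$, the equation $(\Delta_{\phi}-\partial_{t})H$ produces the combination $2v|{\rm Hess}\,v|^{2}-4{\rm Hess}\,v(\nabla v,\nabla v)+2v^{-1}|\nabla v|^{4}$, and the paper does \emph{not} discard the Hessian square; instead it uses the perfect-square identity
\[
2v\Big|{\rm Hess}\,v+\tfrac{\nabla v\otimes\nabla v}{v}\Big|^{2}\ge 0
\]
to convert everything into $-8{\rm Hess}\,v(\nabla v,\nabla v)=4v^{-3}H^{2}-4v^{-1}\langle\nabla v,\nabla H\rangle$. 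After localising with $\mathcal{G}=\eta H$, the paper multiplies through by $\eta v^{3}$, which aligns the $v$-powers so that each Young absorption (of $v^{3/2}\mathcal{G}^{3/2}$, $v\mathcal{G}$, and $\sqrt{v\mathcal{G}}$ against $v^{2}\mathcal{G}^{2}$) leaves remainders with \emph{no} $\tilde k$-dependence; the single $\tilde k^{-1/2}$ in the final bound arises only at the last step, from $v^{2}\mathcal{G}^{2}\le(\text{const})$ and $v\ge\tilde k$.

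Your route, keeping $\tfrac{4}{v^{2}}w^{2}\ge\tfrac{4}{k^{2}}w^{2}$ and throwing away $|{\rm Hess}\,v|^{2}$, cannot reproduce these constants. The cross term $\tfrac{4}{v}\langle\nabla\eta,\nabla v\rangle\,w$ paired against $\tfrac{4}{k^{2}}G^{2}$ via Young leaves a remainder proportional to $k^{2}c_{1}^{2}/R^{4}$ (or worse, $k^{6}\tilde k^{-4}c_{1}^{2}/R^{4}$ if you first bound $1/v\le 1/\tilde k$), and the final conversion $\tfrac{|\nabla u|}{\sqrt u}=3\sqrt v\,|\nabla v|\le 3\sqrt k\,\sqrt{G}$ adds another $\sqrt k$. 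Your cutoff contribution therefore scales like $k^{3/2}\sqrt{c_{1}}/R$, not the stated $9\sqrt{c_{1}}/(R\sqrt{\tilde k})$; the other terms drift similarly. So while your scheme does yield \emph{a} Hamilton-type bound, it will not close to the theorem as written. The fix is to work with $H=v|\nabla v|^{2}$ and use the perfect-square trick above in place of discarding the Hessian.
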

Before we prove Theorem \ref{t3}, firstly we derive the following lemma.
\begin{lemma}\label{l3}
Let  $(M^{n},g,e^{-\phi}dv)$  be a weighted  Riemannian manifold, $g(t)$ evolves by (\ref{e2}) for $t\in[0,T]$ satisfies the hypotheses of Theorem \ref{t3}. If $v=u^{\frac{1}{3}}$ and $H:=v|\nabla v|^{2}$, then  we have
 \begin{eqnarray}\nonumber
(\Delta_{\phi}-\partial_{t})H&\geq&4v^{-3}H^{2}-4v^{-1}\langle \nabla v,\nabla H\rangle-2(n-1)k_{1}H-|p||\hat{A}|H\\&&-\frac{2}{3}v|p||\hat{A}_{v}|H-\frac{2}{3}v^{\frac{3}{2}}|\hat{A}||\nabla p|\sqrt{H}-2k_{2}v^{-1}H\\\nonumber&&-\frac{2}{3}v^{3a+\frac{3}{2}}|\nabla q|\sqrt{H}-(2a+1)v^{3a}|q|H.
\end{eqnarray}
\end{lemma}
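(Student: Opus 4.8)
The plan is to reduce \eqref{e1} to a scalar semilinear equation for $v=u^{1/3}$ and then run the Bochner technique on $H=v|\nabla v|^{2}$. Since $u=v^{3}$ one computes $\Delta_{\phi}u=3v^{2}\Delta_{\phi}v+6v|\nabla v|^{2}$ and $\partial_{t}u=3v^{2}v_{t}$, so dividing \eqref{e1} by $3v^{2}$ and using $v^{-2}A(v^{3})=v\hat{A}$ with $\hat{A}=A(u)/u$ gives
\begin{equation*}
P:=(\Delta_{\phi}-\partial_{t})v=-2v^{-1}|\nabla v|^{2}+\tfrac13 qv^{3a+1}+\tfrac13 pv\hat{A}.
\end{equation*}
This replaces the one obstacle in the statement — the presence of $\hat{A}$ — by a term linear in $\hat{A}$ and its $v$-derivative, at the cost of carrying powers of $v$ everywhere.

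Next, writing $w:=|\nabla v|^{2}$ so that $H=vw$, I would expand $(\Delta_{\phi}-\partial_{t})H$ using the product rule $\Delta_{\phi}(vw)=v\Delta_{\phi}w+w\Delta_{\phi}v+2\langle\nabla v,\nabla w\rangle$, the weighted Bochner formula \eqref{e4}, and the first evolution identity of Lemma \ref{l1} for $\partial_{t}|\nabla v|^{2}$. These combine into $(\Delta_{\phi}-\partial_{t})w=2|{\rm Hess}\,v|^{2}+2\langle\nabla P,\nabla v\rangle+2\,Ric_{\phi}(\nabla v,\nabla v)+2h(\nabla v,\nabla v)$, hence
\begin{equation*}
(\Delta_{\phi}-\partial_{t})H=2v|{\rm Hess}\,v|^{2}+2v\langle\nabla P,\nabla v\rangle+2v\,Ric_{\phi}(\nabla v,\nabla v)+2vh(\nabla v,\nabla v)+wP+2\langle\nabla v,\nabla w\rangle .
\end{equation*}
Then I would substitute $\nabla P=2v^{-2}w\,\nabla v-2v^{-1}\nabla w+\tfrac13 v^{3a+1}\nabla q+\tfrac13(3a+1)qv^{3a}\nabla v+\tfrac13 v\hat{A}\,\nabla p+\tfrac13 p\hat{A}\,\nabla v+\tfrac13 pv\hat{A}_{v}\,\nabla v$; this step manufactures all the $p$-, $q$- and $\hat{A}$-terms appearing in the claim.

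The crucial step is the bookkeeping of the quartic terms. The contributions $2v\langle\nabla P,\nabla v\rangle$ and $wP$, together with the remaining $2\langle\nabla v,\nabla w\rangle$ once $\langle\nabla w,\nabla v\rangle$ is rewritten through $\nabla H=w\,\nabla v+v\,\nabla w$, must be seen to reinforce into the good term $4v^{-1}w^{2}=4v^{-3}H^{2}$ plus a transport term proportional to $v^{-1}\langle\nabla v,\nabla H\rangle$. After this, the remaining estimates are routine: discard $2v|{\rm Hess}\,v|^{2}\ge0$; use $Ric_{\phi}\ge-(n-1)k_{1}g$ to get $2v\,Ric_{\phi}(\nabla v,\nabla v)\ge-2(n-1)k_{1}H$ and $h\ge-k_{2}g$ to control the flow term $2vh(\nabla v,\nabla v)$ by a constant times $k_{2}v^{-1}H$; collect the coefficient of $w=v^{-1}H$ in the $q$- and $p$-parts into $-(2a+1)v^{3a}|q|H$, $-|p||\hat{A}|H$ and $-\tfrac23 v|p||\hat{A}_{v}|H$; and apply Cauchy--Schwarz to the terms linear in $\nabla v$, using $v^{k}|\nabla v|=v^{k-1/2}\sqrt H$, so that $\tfrac23 v^{3a+2}\langle\nabla q,\nabla v\rangle\ge-\tfrac23 v^{3a+3/2}|\nabla q|\sqrt H$ and $\tfrac23 v^{2}\hat{A}\langle\nabla p,\nabla v\rangle\ge-\tfrac23 v^{3/2}|\hat{A}||\nabla p|\sqrt H$. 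Assembling these bounds yields the stated inequality.

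The main obstacle is precisely this bookkeeping: because $u=v^{3}$, $w=|\nabla v|^{2}$ and $H=vw$ each carry a different power of $v$, one must verify carefully that the several $|\nabla v|^{4}$ contributions add up (with the correct sign) to the positive term $4v^{-3}H^{2}$ rather than partially cancelling, and that every cross term is split so that only a first power of $|\nabla v|$ remains to be absorbed by Cauchy--Schwarz. Beyond the Bochner computation itself there is no conceptual difficulty.
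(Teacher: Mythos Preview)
Your outline follows the paper's route almost verbatim: derive the equation for $v$, apply the weighted Bochner formula and Lemma~\ref{l1} to $(\Delta_{\phi}-\partial_t)H$, and then estimate term by term. The computation of $P$, of $(\Delta_\phi-\partial_t)w$, and of all the $p$-, $q$-, $\hat A$-, $Ric_\phi$- and $h$-contributions is correct and matches the paper.

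There is, however, one concrete slip in your bookkeeping. After your substitution the quartic/Hessian block is the \emph{identity}
\[
2v|{\rm Hess}\,v|^{2}-4\,{\rm Hess}\,v(\nabla v,\nabla v)+2v^{-1}|\nabla v|^{4},
\]
which, if you rewrite $\langle\nabla v,\nabla w\rangle$ through $\nabla H$ and then simply ``discard $2v|{\rm Hess}\,v|^{2}\ge 0$'' as you propose, gives
\[
4v^{-3}H^{2}-2v^{-1}\langle\nabla v,\nabla H\rangle,
\]
i.e.\ a transport coefficient $-2v^{-1}$, not the $-4v^{-1}$ of the stated lemma. Since $\langle\nabla v,\nabla H\rangle$ has no sign, your inequality neither implies nor is implied by the claimed one. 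The paper obtains the factor $-4$ by a different (and sharper) discarding: it completes the square
\[
2v|{\rm Hess}\,v|^{2}+4\,{\rm Hess}\,v(\nabla v,\nabla v)+2v^{-1}|\nabla v|^{4}
=2v\Big|{\rm Hess}\,v+\tfrac{\nabla v\otimes\nabla v}{v}\Big|^{2}\ge 0,
\]
so that the whole block is bounded below by $-8\,{\rm Hess}\,v(\nabla v,\nabla v)$, and only then rewrites ${\rm Hess}\,v(\nabla v,\nabla v)=\tfrac{1}{2v}\big(\langle\nabla v,\nabla H\rangle-v^{-1}H^{2}\big)$ to get exactly $4v^{-3}H^{2}-4v^{-1}\langle\nabla v,\nabla H\rangle$. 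Replace your ``discard $2v|{\rm Hess}\,v|^{2}$'' step by this completed square and your argument proves the lemma as stated; everything else in your outline is fine.
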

\begin{proof}
Since $u=v^{3}$, we get $u_{t}=3v^{2}v_{t}$, $\nabla u=3v^{2}\nabla v$ and  $\Delta_{\phi}u=3v^{2}\Delta_{\phi}v+6v|\nabla v|^{2}$. From (\ref{e1}) we conclude that
\begin{equation}\label{e0l3}
(\Delta_{\phi}-\partial_{t})v=-2\frac{|\nabla v|^{2}}{v}+\frac{1}{3}vp\hat{A}(v)+\frac{qv^{3a+1}}{3}.
\end{equation}
By direct computation, we calculate
\begin{eqnarray}\nonumber
\partial_{t}H&=& |\nabla v|^{2}v_{t}+2v\langle \nabla v_{t},\nabla v\rangle-2h(\nabla v,\nabla v)\\\nonumber&=&
 |\nabla v|^{2}v_{t}+2v\langle \nabla \Delta_{\phi},\nabla v\rangle+8{\rm Hess v}( \nabla v,\nabla v)-4\frac{|\nabla v|^{4}}{v}-\frac{2}{3}pv\hat{A}|\nabla v|^{2}\\\label{e1l3}
&&-\frac{2}{3}v^{2}p\hat{A}_{v}|\nabla v|^{2}-\frac{2}{3}v^{2}\hat{A}\langle \nabla p,\nabla v\rangle
-\frac{2}{3}v^{3a+2}\langle \nabla q,\nabla v\rangle\\\nonumber&&
-\frac{2(3a+1)}{3}v^{3a+1}q|\nabla v|^{2}
-2h(\nabla v,\nabla v).
\end{eqnarray}
By the weighted  Bochner formula we have
\begin{eqnarray}\nonumber
\Delta_{\phi}H&=&|\nabla v|^{2}\Delta_{\phi}v+v\Delta_{\phi} |\nabla v|^{2}+2\langle \nabla |\nabla v|^{2},\nabla v\rangle\\\label{e2l3}&=&|\nabla v|^{2}\Delta_{\phi}v+4{\rm Hess v}( \nabla v,\nabla v)+2v\langle \nabla \Delta_{\phi},\nabla v\rangle\\\nonumber&&+2vRic_{\phi}( \nabla v,\nabla v)+2v|{\rm Hess v}|^{2}.
\end{eqnarray}
Combining (\ref{e1l3}) and (\ref{e2l3}), we infer
\begin{eqnarray}\nonumber
(\Delta_{\phi}-\partial_{t})H&=&|\nabla v|^{2}(\Delta_{\phi}-\partial_{t})v+4{\rm Hess v}( \nabla v,\nabla v)+2vRic_{\phi}( \nabla v,\nabla v)\\\label{e3l3}&&+2v|{\rm Hess v}|^{2}+4\frac{|\nabla v|^{4}}{v}
+\frac{2}{3}pv\hat{A}|\nabla v|^{2}\\\nonumber
&&+\frac{2}{3}v^{2}p\hat{A}_{v}|\nabla v|^{2}+\frac{2}{3}v^{2}\hat{A}\langle \nabla p,\nabla v\rangle
+\frac{2}{3}v^{3a+2}\langle \nabla q,\nabla v\rangle\\\nonumber&&
+\frac{2(3a+1)}{3}v^{3a+1}q|\nabla v|^{2}
+2h(\nabla v,\nabla v)
\end{eqnarray}
Substituting (\ref{e0l3}) into (\ref{e3l3}) we get
\begin{eqnarray*}
(\Delta_{\phi}-\partial_{t})H&=&
2v|{\rm Hess v}|^{2}-4{\rm Hess v}( \nabla v,\nabla v)+2\frac{|\nabla v|^{4}}{v}+2vRic_{\phi}( \nabla v,\nabla v)\\&&+pv\hat{A}|\nabla v|^{2}
+\frac{2}{3}v^{2}p\hat{A}_{v}|\nabla v|^{2}+\frac{2}{3}v^{2}\hat{A}\langle \nabla p,\nabla v\rangle
+\frac{2}{3}v^{3a+2}\langle \nabla q,\nabla v\rangle\\\nonumber&&
+(2a+1)v^{3a+1}q|\nabla v|^{2}
+2h(\nabla v,\nabla v)
\end{eqnarray*}
As
\begin{equation*}
2v|{\rm Hess v}|^{2}+4{\rm Hess v}( \nabla v,\nabla v)+2\frac{|\nabla v|^{4}}{v}=2v\big|{\rm Hess}v+\frac{\nabla v\otimes\nabla v}{v}\big|^{2}\geq0
\end{equation*}
we obtain
\begin{eqnarray*}
(\Delta_{\phi}-\partial_{t})H&\geq&
-8{\rm Hess v}( \nabla v,\nabla v)+2vRic_{\phi}( \nabla v,\nabla v)+pv\hat{A}|\nabla v|^{2}
+\frac{2}{3}v^{2}p\hat{A}_{v}|\nabla v|^{2}\\&&+\frac{2}{3}v^{2}\hat{A}\langle \nabla p,\nabla v\rangle
+\frac{2}{3}v^{3a+2}\langle \nabla q,\nabla v\rangle+(2a+1)v^{3a+1}q|\nabla v|^{2}\\\nonumber&&
+2h(\nabla v,\nabla v)\\&\geq&
4v^{-3}H^{2}-4v^{-1}\langle \nabla v,\nabla H\rangle+2vRic_{\phi}( \nabla v,\nabla v)-|p||\hat{A}|H\\&&-\frac{2}{3}v|p||\hat{A}_{v}|H-\frac{2}{3}v^{\frac{3}{2}}|\hat{A}||\nabla p|\sqrt{H}-\frac{2}{3}v^{3a+\frac{3}{2}}|\nabla q|\sqrt{H}\\\nonumber&&-(2a+1)v^{3a}|q|H+2h(\nabla v,\nabla v).
\end{eqnarray*}
The assumptions $h\geq-k_{2}g$ and $Ric_{\phi}\geq -(n-1)k_{1}g$ complete the proof of Lemma.
\end{proof}
\begin{proof}[Proof of Theorem \ref{t3}]
Choosing $\psi$ and $\eta$  as in the proof of Theorem \ref{t1}. For any $T_{1}\in(0,T]$, let $(x_{2},t_{2})\in Q_{2R,T_{1}}$ be a point  where $\mathcal{G}(x,t)=\eta(x,t)H(x,t)$ achieve  its maximum, and without loss of generality we can assume $\mathcal{G}(x_{2},t_{2})>0$, and then  $H(x_{2},t_{2})>0$. By Lemma \ref{l3} and a similar argument as in the proof of Theorem \ref{t1},  at point $(x_{2},t_{2})$ we have
\begin{eqnarray*}
0&\geq&  (\Delta_{\phi}-\partial_{t})\mathcal{G}= (\Delta_{\phi}-\partial_{t})(\eta H)\\&=&
H(\Delta_{\phi}-\partial_{t})\eta+\eta(\Delta_{\phi}-\partial_{t})H+2\langle\nabla\eta,\nabla H\rangle\\&\geq&
\eta(\Delta_{\phi}-\partial_{t})H
 -\Big(\frac{c_{0}}{R}(n-1)(\sqrt{k_{1}}+\frac{2}{R})+\frac{3c_{1}}{R^{2}}+c_{2}k_{2} \Big)H\\&\geq&
4v^{-3}\eta H^{2}-4\frac{\sqrt{c_{1}}}{R}\eta^{\frac{1}{2}}v^{-1} H |\nabla v|-2(n-1)k_{1}\mathcal{G}-\gamma_{1}\lambda_{1}\mathcal{G}-\frac{2}{3}v\gamma_{1}\lambda_{2}\mathcal{G}\\&&
-\frac{2}{3}v^{\frac{3}{2}}\eta \lambda_{1}\gamma_{2}\sqrt{H}-2k_{2}v^{-1}\mathcal{G}
-\frac{2}{3}v^{3a+\frac{3}{2}}\sigma_{2}\eta\sqrt{H}-(2a+1)v^{3a}\sigma_{1}\mathcal{G}\\&&
 -\Big(\frac{c_{0}}{R}(n-1)(\sqrt{k_{1}}+\frac{2}{R})+\frac{3c_{1}}{R^{2}}+c_{2}k_{2} \Big)H.
\end{eqnarray*}
Multiply both sides of above inequality by $\eta v^{3}$. An elementary calculation  implies that
\begin{eqnarray}\nonumber
0&\geq&
4v^{2}\mathcal{G}^{2}-4\frac{\sqrt{c_{1}}}{R}v^{\frac{3}{2}} \mathcal{G}^{\frac{3}{2}} -2(n-1)k_{1}\eta v^{3}\mathcal{G}-\gamma_{1}\lambda_{1}v^{3}\eta\mathcal{G}-\frac{2}{3}\gamma_{1}\lambda_{2}v^{4}\eta\mathcal{G}\\\nonumber&&
-\frac{2}{3}v^{\frac{9}{2}}\eta^{\frac{3}{2}} \lambda_{1}\gamma_{2}\sqrt{\mathcal{G}}-2k_{2}v^{2}\eta\mathcal{G}
-\frac{2}{3}v^{3a+\frac{9}{2}}\sigma_{2}\eta^{\frac{3}{2}}\sqrt{\mathcal{G}}-(2a+1)v^{3a+3}\sigma_{1}\eta\mathcal{G}\\\nonumber&&
 -\Big(\frac{c_{0}}{R}(n-1)(\sqrt{k_{1}}+\frac{2}{R})+\frac{3c_{1}}{R^{2}}+c_{2}k_{2} \Big) v^{3}\mathcal{G}\\\nonumber&= &
4v^{2}\mathcal{G}^{2}-4\frac{\sqrt{c_{1}}}{R}v^{\frac{3}{2}} \mathcal{G}^{\frac{3}{2}}\\\label{e5l3}&&
-\Big[  2(n-1)k_{1} v^{2}+\gamma_{1}\lambda_{1}v^{2}+\frac{2}{3}\gamma_{1}\lambda_{2}v^{3}
+2k_{2}v+(2a+1)v^{3a+2}\sigma_{1}\Big] v\eta \mathcal{G}\\\nonumber&&
-\Big(\frac{c_{0}}{R}(n-1)(\sqrt{k_{1}}+\frac{2}{R})+\frac{3c_{1}}{R^{2}}+c_{2}k_{2} \Big)v^{3}\mathcal{G}
\\\nonumber&&
-\Big(
\frac{2}{3}v^{4}\lambda_{1}\gamma_{2}+\frac{2}{3}v^{3a+4}\sigma_{2}
\Big)\eta^{\frac{31}{2}} \sqrt{v\mathcal{G}},
\end{eqnarray}
 at point $(x_{2},t_{2})$. By Young's inequality and as $\tilde{k}^{3}\leq u\leq k^{3}$, we infer
\begin{eqnarray*}
4\frac{\sqrt{c_{1}}}{R}v^{\frac{3}{2}} \mathcal{G}^{\frac{3}{2}}\leq v^{2}\mathcal{G}^{2}+81\frac{c_{1}^{2}}{R^{4}},
\end{eqnarray*}
and
\begin{eqnarray*}
&&\Big[  2(n-1)k_{1} v^{2}+\gamma_{1}\lambda_{1}v^{2}+\frac{2}{3}\gamma_{1}\lambda_{2}v^{3}
+2k_{2}v+(2a+1)v^{3a+2}\sigma_{1}\Big] v\eta \mathcal{G}\\&&
-\Big(\frac{c_{0}}{R}(n-1)(\sqrt{k_{1}}+\frac{2}{R})+\frac{3c_{1}}{R^{2}}+c_{2}k_{2} \Big)v^{3}\mathcal{G}
\\&&\leq v^{2}\mathcal{G}^{2}+\frac{1}{4}\Big[  2(n-1)k_{1} v^{2}+\gamma_{1}\lambda_{1}v^{2}+\frac{2}{3}\gamma_{1}\lambda_{2}v^{3}
+2k_{2}v+(2a+1)v^{3a+2}\sigma_{1}\\&&
+\Big(\frac{c_{0}}{R}(n-1)(\sqrt{k_{1}}+\frac{2}{R})+\frac{3c_{1}}{R^{2}}+c_{2}k_{2} \Big)v^{2}
\Big]^{2}
\\&&\leq v^{2}\mathcal{G}^{2}+\frac{1}{4}\Big[  2(n-1)k_{1} k^{2}+\gamma_{1}\lambda_{1}k^{2}+\frac{2}{3}\gamma_{1}\lambda_{2}k^{3}
+2k_{2}k+(2a+1)k^{3a+2}\sigma_{1}\\&&
+\Big(\frac{c_{0}}{R}(n-1)(\sqrt{k_{1}}+\frac{2}{R})+\frac{3c_{1}}{R^{2}}+c_{2}k_{2} \Big)k^{2}
\Big]^{2}.
\end{eqnarray*}
Using Young's inequality again, we can write
\begin{eqnarray*}
&&\Big(
\frac{2}{3}v^{4}\lambda_{1}\gamma_{2}+\frac{2}{3}v^{3a+4}\sigma_{2}
\Big)\eta^{\frac{3}{2}} \sqrt{v\mathcal{G}}
\\&&\leq
v^{2}\mathcal{G}^{2}+3(\frac{1}{4})^{\frac{4}{3}}\Big(
\frac{2}{3}v^{4}\lambda_{1}\gamma_{2}+\frac{2}{3}v^{3a+4}\sigma_{2}
\Big)^{\frac{4}{3}}
\\&&\leq
v^{2}\mathcal{G}^{2}+3(\frac{1}{4})^{\frac{4}{3}}\Big(
\frac{2}{3}k^{4}\lambda_{1}\gamma_{2}+\frac{2}{3}k^{3a+4}\sigma_{2}
\Big)^{\frac{4}{3}}.
\end{eqnarray*}
 Substituting the above three inequalities into (\ref{e5l3}), we arrive at
\begin{eqnarray*}
v^{2}\mathcal{G}^{2}&\leq&
81\frac{c_{1}^{2}}{R^{4}}+3(\frac{1}{4})^{\frac{4}{3}}\Big(
\frac{2}{3}k^{4}\lambda_{1}\gamma_{2}+\frac{2}{3}k^{3a+4}\sigma_{2}
\Big)^{\frac{4}{3}}\\&&+\frac{1}{4}\Big[  2(n-1)k_{1} k^{2}+\gamma_{1}\lambda_{1}k^{2}+\frac{2}{3}\gamma_{1}\lambda_{2}k^{3}
+2k_{2}k+(2a+1)k^{3a+2}\sigma_{1}\\&&
+\Big(\frac{c_{0}}{R}(n-1)(\sqrt{k_{1}}+\frac{2}{R})+\frac{3c_{1}}{R^{2}}+c_{2}k_{2} \Big)k^{2}
\Big]^{2},
\end{eqnarray*}
 at point $(x_{2},t_{2})$. Since $\tilde{k}^{3}\leq u$, then $\tilde{k}^{2}H^{2}\leq v^{2}\mathcal{G}^{2}$. Therefore
\begin{eqnarray*}
\big(\frac{|\nabla u|^{2}}{9u} \big)^{2}&\leq&\frac{1}{{\tilde k}^{2}}\Big\{
81\frac{c_{1}^{2}}{R^{4}}+3(\frac{1}{4})^{\frac{4}{3}}\Big(
\frac{2}{3}k^{4}\lambda_{1}\gamma_{2}+\frac{2}{3}k^{3a+4}\sigma_{2}
\Big)^{\frac{4}{3}}\\&&+\frac{1}{4}\Big[  2(n-1)k_{1} k^{2}+\gamma_{1}\lambda_{1}k^{2}+\frac{2}{3}\gamma_{1}\lambda_{2}k^{3}
+2k_{2}k+(2a+1)k^{3a+2}\sigma_{1}\\&&
+\Big(\frac{c_{0}}{R}(n-1)(\sqrt{k_{1}}+\frac{2}{R})+\frac{3c_{1}}{R^{2}}+c_{2}k_{2} \Big)k^{2}
\Big]^{2}\Big\},
\end{eqnarray*}
 at point $(x_{2},t_{2})$.
 Hence  inequality $\sqrt{x+y}\leq\sqrt{x}+\sqrt{y}$ for $x,y\geq0$ implies that
\begin{eqnarray*}
\frac{|\nabla u|}{\sqrt{u}} &\leq&\frac{3}{\sqrt{\tilde k}}\Big\{
3\frac{\sqrt{c_{1}}}{R}+3^{\frac{1}{4}}(\frac{1}{4})^{\frac{1}{3}}\Big(
\frac{2}{3}k^{4}\lambda_{1}\gamma_{2}+\frac{2}{3}k^{3a+4}\sigma_{2}
\Big)^{\frac{1}{3}}\\&&+\frac{1}{\sqrt{2}}\Big[  2(n-1)k_{1} k^{2}+\gamma_{1}\lambda_{1}k^{2}+\frac{2}{3}\gamma_{1}\lambda_{2}k^{3}
+2k_{2}k+(2a+1)k^{3a+2}\sigma_{1}\\&&
+\Big(\frac{c_{0}}{R}(n-1)(\sqrt{k_{1}}+\frac{2}{R})+\frac{3c_{1}}{R^{2}}+c_{2}k_{2} \Big)k^{2}
\Big]^{\frac{1}{2}}\Big\},
\end{eqnarray*}
 at point $(x_{2},t_{2})$.
This completes the proof of theorem.
\end{proof}
Similar to Corollary \ref{c1}, when $(M, g(0),e^{-\phi_{0}}dv)$  be a complete noncompact weighted Riemannian manifold without boundary and  $g(t)$ evolve by (\ref{e2}), we can conclude  a global gradient estimate  from Theorem \ref{t3} by taking $R\to +\infty$ as follows.
\begin{corollary}\label{c4}
Let $(M, g(0),e^{-\phi_{0}}dv)$  be a complete noncompact weighted Riemannian manifold without boundary, and let $g(t), \phi(t)$ evolve by (\ref{e2}) for $t\in [0,T]$. Let $u$ be a positive solution to (\ref{e1}) in $M$ such that $\tilde{k}^{3}\leq u\leq k^{3}$ for some positive constants $k$ and $\tilde{k}$. Suppose that there exist constants $k_{1}, k_{2}$ such that
\begin{equation*}
Ric_{\phi}\geq -(n-1)k_{1}g,\qquad h\geq -k_{2}g.
\end{equation*}
on $M$. Then  there exist positive constants $c_{0},c_{1},$ and $c_{2}$ such that
\begin{eqnarray*}
\frac{|\nabla u|}{\sqrt{u}} &\leq&\frac{3}{\sqrt{\tilde k}}\Big\{
3^{\frac{1}{4}}(\frac{1}{4})^{\frac{1}{3}}\Big(
\frac{2}{3}k^{4}\Lambda_{1}\Gamma_{2}+\frac{2}{3}k^{3a+4}\Sigma_{2}
\Big)^{\frac{1}{3}}\\&&+\frac{1}{\sqrt{2}}\Big[  2(n-1)k_{1} k^{2}+\Gamma_{1}\Lambda_{1}k^{2}+\frac{2}{3}\Gamma_{1}\Lambda_{2}k^{3}
\\&&+2k_{2}k+(2a+1)k^{3a+2}\Sigma_{1}
+c_{2}k_{2}k^{2}
\Big]^{\frac{1}{2}}\Big\}.
\end{eqnarray*}
\end{corollary}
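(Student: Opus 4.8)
The plan is to obtain Corollary \ref{c4} as the $R\to+\infty$ limit of the local Hamilton type estimate of Theorem \ref{t3}, in exactly the way Corollary \ref{c1} was deduced from Theorem \ref{t1}. First I would record that, by the same reasoning used at the start of the proof of Theorem \ref{t1} (the comparison $e^{-2k_{2}T}g(0)\leq g(t)\leq e^{2k_{3}T}g(0)$ coming from \cite[Corollary 6.11]{BC}; here the lower bound $h\geq -k_{2}g$ together with completeness of $g(0)$ keeps the metrics uniformly comparable), the manifold $(M,g(t))$ remains complete and noncompact for every $t\in[0,T]$. Hence for any fixed $x_{0}\in M$ and any $R\geq 1$ all hypotheses of Theorem \ref{t3} are satisfied on $Q_{2R,T}$: the bounds on $Ric_{\phi}$ and on $h$ are assumed on all of $M$, and $\tilde{k}^{3}\leq u\leq k^{3}$ holds globally. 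Thus the conclusion of Theorem \ref{t3} is valid at every point of $Q_{R,T}$.

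Next I would fix an arbitrary point $(x,t)\in M\times(0,T]$ and choose $R_{0}$ so large that $(x,t)\in Q_{R,T}$ for all $R\geq R_{0}$. For each such $R$, Theorem \ref{t3} provides a bound on $\frac{|\nabla u|(x,t)}{\sqrt{u(x,t)}}$ whose right-hand side is a finite sum of continuous functions of the local sup-constants $\lambda_{1},\lambda_{2},\gamma_{1},\gamma_{2},\sigma_{1},\sigma_{2}$ (all taken over $Q_{2R,T}$), of the absolute constants $c_{0},c_{1},c_{2}$, and of $R$ itself. Since the left-hand side does not depend on $R$, I would let $R\to+\infty$. The exhausting family $Q_{2R,T}$ increases to $M\times[0,T]$, so each local constant is nondecreasing in $R$ and converges to the corresponding global constant $\Lambda_{1},\Lambda_{2},\Gamma_{1},\Gamma_{2},\Sigma_{1},\Sigma_{2}$ over $M\times[0,T]$ (if some global supremum is infinite the asserted inequality is vacuous, so one may assume all are finite). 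Simultaneously every explicitly $R$-dependent term disappears: $\frac{3\sqrt{c_{1}}}{R}\to0$, $\frac{c_{0}}{R}(n-1)\bigl(\sqrt{k_{1}}+\frac{2}{R}\bigr)\to0$ and $\frac{3c_{1}}{R^{2}}\to0$, so that inside the last bracket only the term $c_{2}k_{2}k^{2}$ survives. Passing to the limit term by term then gives precisely the expression claimed in Corollary \ref{c4} at $(x,t)$; since $(x,t)$ was arbitrary, the estimate holds on all of $M$.

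There is essentially no serious obstacle. The only point that deserves a word of care is the interchange of the limit $R\to+\infty$ with the suprema defining $\lambda_{i},\gamma_{i},\sigma_{i}$; this is legitimate by monotonicity of those suprema along the exhaustion $Q_{2R,T}\uparrow M\times[0,T]$, together with the fact that the absolute constants $c_{0},c_{1},c_{2}$ furnished by Theorem \ref{t3} can be chosen independent of $R$ (they come only from the fixed cutoff profile $\psi$ and from \cite[p.~494]{JS}). With these observations the right-hand side of the local estimate converges, and its limit is exactly the stated global bound, which completes the argument.
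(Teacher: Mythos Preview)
Your proposal is correct and follows exactly the approach the paper indicates: the paper does not give a separate proof of Corollary \ref{c4} but simply says it follows from Theorem \ref{t3} by letting $R\to+\infty$, just as Corollary \ref{c1} followed from Theorem \ref{t1}. Your write-up supplies the routine details (completeness of $g(t)$, monotone convergence of the local sup-constants to the global ones, vanishing of the $R$-dependent terms) that the paper leaves implicit.
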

\section{Souplet-Zhang type gradient estimate}
In this section we obtain a Souplet-Zhang type gradient estimate for (\ref{e1}) under (\ref{e2}).
\begin{theorem}\label{t4}
 Let $(M,g(0),e^{-\phi_{0}}dv )$ be a complete weighted Riemannian manifold, and let $g(t), \phi(t)$ evolve by (\ref{e2}) for $t\in [0,T]$. Given $x_{0}$ and $R>0$, let $u$ be a positive solution to (\ref{e1}) in $Q_{2R,T}$ such that $\tilde{k}\leq u\leq k$ for some positive constants $k$ and $\tilde{k}$. Suppose that there exist constants $k_{1}, k_{2}$ such that
\begin{equation*}
Ric_{\phi}\geq -(n-1)k_{1}g,\qquad h\geq -k_{2}g,
\end{equation*}
on $Q_{2R,T}$. Then   there exist positive constants $c_{0},c_{1},$ and $c_{2}$ such that
\begin{eqnarray}\label{et4}
\frac{|\nabla u|}{u}&\leq&(1+\ln \frac{k}{\tilde{k}})\Big[
6^{\frac{1}{4}}\big(\ln \frac{k}{\tilde{k}}\big)\frac{\sqrt{c_{1}}}{R}
+(\frac{3}{4})^{\frac{1}{4}}\Big(2\lambda_{1} \gamma_{2}+2k^{a}\sigma_{2}\Big)^{\frac{1}{3}}\\\nonumber&&
+\sqrt{2}\Big(\gamma_{1}\lambda_{2}+(a+1)\sigma_{1}k^{a}+(n-1)k_{1}+k_{2}+\gamma_{1}\lambda_{1}
\Big)^{\frac{1}{2}}\\\nonumber&&
+\Big(\frac{c_{0}}{R}(n-1)(\sqrt{k_{1}}+\frac{2}{R})+\frac{3c_{1}}{R^{2}}+c_{2}k_{2} \Big)^{\frac{1}{2}}\Big].
\end{eqnarray}
\end{theorem}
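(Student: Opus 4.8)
The plan is to follow the same four-stage scheme used in the proof of Theorem \ref{t3}, but with the logarithmic auxiliary function characteristic of Souplet--Zhang estimates. Since $u\le k$, first I would put $\varphi:=\ln(u/k)\le 0$, so that $\nabla\varphi=\nabla\ln u$, and by (\ref{e3}), using $e^{a\ln u}=u^{a}$ and that $\ln k$ is constant,
\begin{equation*}
(\Delta_{\phi}-\partial_{t})\varphi=-|\nabla\varphi|^{2}+qu^{a}+p\hat A .
\end{equation*}
The two-sided bound $\tilde k\le u\le k$ gives $0\le-\varphi\le\ln(k/\tilde k)$, hence $1\le 1-\varphi\le 1+\ln(k/\tilde k)$, together with $u^{a}\le k^{a}$. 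Then I would set $\omega:=|\nabla\varphi|^{2}/(1-\varphi)^{2}$, so that $|\nabla u|/u=|\nabla\varphi|=(1-\varphi)\sqrt\omega$; it therefore suffices to bound $\omega$ from above.

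The core step is a lemma computing $(\Delta_{\phi}-\partial_{t})\omega$. Here I would use the weighted Bochner formula (\ref{e4}), Lemma \ref{l1} (in particular $\partial_{t}|\nabla\varphi|^{2}=-2h(\nabla\varphi,\nabla\varphi)+2\langle\nabla\varphi,\nabla\varphi_{t}\rangle$) and the PDE above to replace $\Delta_{\phi}\varphi=\varphi_{t}-|\nabla\varphi|^{2}+qu^{a}+p\hat A$. Differentiating $(1-\varphi)^{-2}$ twice produces the terms in $(1-\varphi)^{-3}$ and $(1-\varphi)^{-4}$; combining the Hessian term from Bochner with the transport term via $({\rm Hess}\,\varphi(\nabla\varphi,\nabla\varphi))^{2}\le|{\rm Hess}\,\varphi|^{2}|\nabla\varphi|^{4}$ and the identity ${\rm Hess}\,\varphi(\nabla\varphi,\nabla\varphi)=\tfrac12\big((1-\varphi)^{2}\langle\nabla\omega,\nabla\varphi\rangle-2(1-\varphi)^{3}\omega^{2}\big)$, I would obtain an inequality of the form
\begin{align*}
(\Delta_{\phi}-\partial_{t})\omega\ \ge\ & \frac{2(1+\varphi)}{1-\varphi}\langle\nabla\varphi,\nabla\omega\rangle+\frac{\big(\langle\nabla\omega,\nabla\varphi\rangle-2(1-\varphi)\omega^{2}\big)^{2}}{2(1-\varphi)^{2}\omega^{2}}-2\varphi\,\omega^{2}\\
& -2\big((n-1)k_{1}+k_{2}\big)\omega-\big(\gamma_{1}\lambda_{1}+\gamma_{1}\lambda_{2}+(a+1)\sigma_{1}k^{a}\big)\omega-\big(\lambda_{1}\gamma_{2}+k^{a}\sigma_{2}\big)\sqrt\omega,
\end{align*}
where $Ric_{\phi}\ge-(n-1)k_{1}g$ yields the $-2(n-1)k_{1}\omega$ term, $h\ge-k_{2}g$ applied to $-2h(\nabla\varphi,\nabla\varphi)$ yields the $-2k_{2}\omega$ term, and the remaining terms come from $qu^{a}+p\hat A$ and from $\langle\nabla q,\nabla\varphi\rangle,\ \langle\nabla p,\nabla\varphi\rangle,\ p\hat A_{f}|\nabla\varphi|^{2}$ (using $|\hat A|\le\lambda_{1}$, $|\hat A_{f}|\le\lambda_{2}$, $|p|\le\gamma_{1}$, etc., and $u^{a}\le k^{a}$). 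The decisive feature, exactly as in Souplet--Zhang, is that at a point where $\langle\nabla\omega,\nabla\varphi\rangle$ is controlled the surviving quadratic term is a positive multiple of $(1-\varphi)\omega^{2}$.

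Next I would localize. Take $\psi$ and $\eta(x,t)=\psi(r(x,t)/R)$ exactly as in the proof of Theorem \ref{t1}, with the same consequences (\ref{t11}), (\ref{t12}), (\ref{t15}); fix $T_{1}\in(0,T]$ and let $(x_{2},t_{2})\in Q_{2R,T_{1}}$ be a maximum point of $\mathcal G:=\eta\omega$, which we may assume positive. There $\nabla\mathcal G=0$, $\Delta_{\phi}\mathcal G\le0$, $\partial_{t}\mathcal G\ge0$, so
\begin{equation*}
0\ \ge\ \omega(\Delta_{\phi}-\partial_{t})\eta+\eta(\Delta_{\phi}-\partial_{t})\omega+2\langle\nabla\eta,\nabla\omega\rangle .
\end{equation*}
Using $\nabla\omega=-(\omega/\eta)\nabla\eta$ at $(x_{2},t_{2})$ to rewrite $\langle\nabla\eta,\nabla\omega\rangle$ and the transport term, bounding $|\nabla\varphi|=(1-\varphi)\sqrt\omega$ and $|\nabla\eta|\le\tfrac{\sqrt{c_{1}}}{R}\eta^{1/2}$, inserting (\ref{t11}) and (\ref{t15}), and multiplying through by a suitable power of $\eta$, I would reach at $(x_{2},t_{2})$ a polynomial inequality in $\mathcal G$ whose leading term is a positive multiple of $(1-\varphi)\mathcal G^{2}$, with lower-order terms $\lesssim\tfrac{\sqrt{c_{1}}}{R}\mathcal G^{3/2}$, a term $\big((n-1)k_{1}+k_{2}+\gamma_{1}\lambda_{1}+\gamma_{1}\lambda_{2}+(a+1)\sigma_{1}k^{a}+\tfrac{c_{0}}{R}(n-1)(\sqrt{k_{1}}+\tfrac{2}{R})+\tfrac{3c_{1}}{R^{2}}+c_{2}k_{2}\big)\mathcal G$, and a term $\lesssim\big(\lambda_{1}\gamma_{2}+k^{a}\sigma_{2}\big)\mathcal G^{1/2}$. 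Young's inequality then absorbs the $\mathcal G^{3/2}$ and $\mathcal G^{1/2}$ contributions into the leading term; after replacing the $\varphi$-dependent factors by $1\le 1-\varphi\le 1+\ln(k/\tilde k)$ and $u^{a}$ by $k^{a}$, one is left with a quadratic $\tilde a\mathcal G^{2}-\tilde b\mathcal G-\tilde c\le0$, hence $\mathcal G\le\tilde b/\tilde a+\sqrt{\tilde c/\tilde a}$. Since $\eta(x,T_{1})=1$ whenever $d(x,x_{0},T_{1})\le R$, this bounds $\omega(x,T_{1})$, and then $|\nabla u|/u=(1-\varphi)\sqrt\omega\le(1+\ln(k/\tilde k))\sqrt{\mathcal G(x_{2},t_{2})}$; splitting the right side with $\sqrt{x+y}\le\sqrt x+\sqrt y$ produces the three summands of (\ref{et4}) (the $1/R$ term from the $c_{1}^{2}/R^{4}$ contribution, the $1/3$-power term from the $(\lambda_{1}\gamma_{2}+k^{a}\sigma_{2})^{4/3}$ contribution, and the square-root term from the remaining coefficient), and letting $T_{1}$ range over $(0,T]$ concludes.

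The hard part will be the lemma in the second paragraph: one must verify that, after the cancellations coming from differentiating $(1-\varphi)^{-2}$ twice and from substituting the PDE into the Bochner identity, the coefficient of $\omega^{2}$ is a genuine multiple of $1-\varphi$ rather than a constant --- this is precisely the mechanism responsible for the $(1+\ln(k/\tilde k))$ prefactor, and it hinges on the Cauchy--Schwarz bound for ${\rm Hess}\,\varphi(\nabla\varphi,\nabla\varphi)$ together with its expression in terms of $\nabla\omega$. The additional difficulty relative to the fixed-metric Souplet--Zhang argument is absorbing the flow contribution $-2h(\nabla\varphi,\nabla\varphi)$ from $\partial_{t}|\nabla\varphi|^{2}$ (handled by $h\ge-k_{2}g$) and the reaction-induced gradient terms $\langle\nabla q,\nabla\varphi\rangle$, $\langle\nabla p,\nabla\varphi\rangle$, $p\hat A_{f}|\nabla\varphi|^{2}$ generated by $qu^{a}+p\hat A$, all of which must be routed, via Cauchy--Schwarz and Young, into the constants appearing in (\ref{et4}).
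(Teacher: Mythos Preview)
Your proposal is correct and follows essentially the same Souplet--Zhang scheme as the paper: the auxiliary quantity $\omega=|\nabla\varphi|^{2}/(1-\varphi)^{2}$ with $\varphi=\ln(u/k)$ is exactly the paper's $w=|\nabla f|^{2}/(\rho-f)^{2}$ (since $1-\varphi=\rho-f$), and the subsequent localization with $\eta$, maximum-principle argument, and Young-inequality absorption mirror the paper's proof of Theorem \ref{t4} step for step. The only cosmetic difference is in how you handle the Hessian block: the paper completes the square via $\big|{\rm Hess}f+\tfrac{df\otimes df}{\rho-f}\big|^{2}\ge 0$ and discards it immediately to obtain the clean leading term $2(\rho-f)w^{2}$, whereas you retain a positive square and invoke Cauchy--Schwarz for ${\rm Hess}\,\varphi(\nabla\varphi,\nabla\varphi)$; both routes land on the same $(1-\varphi)\omega^{2}$ coefficient. (Note that your transport coefficient $\tfrac{2(1+\varphi)}{1-\varphi}$ should read $\tfrac{2\varphi}{1-\varphi}$ after the cancellations, matching the paper's Lemma \ref{l4}; this is a bookkeeping slip that does not affect the argument.)
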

Before we prove Theorem \ref{t4}, firstly we derive the following lemma.
\begin{lemma}\label{l4}
Let  $(M^{n},g(t),e^{-\phi}dv)$  be a weighted  Riemannian manifold, $g(t)$ evolves by (\ref{e2}) for $t\in[0,T]$ satisfies the hypotheses of Theorem \ref{t4}. If $f=\ln u$, $\rho:=1+\ln k $, and $w:=|\nabla \ln(\rho-f)|^{2}=\frac{|\nabla u|^{2}}{(\rho-f)^{2}}$. Then  we have
 \begin{eqnarray*}\nonumber
(\Delta_{\phi}-\partial_{t})w&\geq&\frac{2(f-\ln k)}{\rho-f}\langle \nabla w, \nabla f\rangle+2(\rho-f)w^{2}\\&&
-\frac{2}{\rho-f}\Big( |\hat{A}|\nabla p|+e^{af}|\nabla q|\Big)\sqrt{w}\\&&
+2\Big(p\hat{A}_{f}+aqe^{af}-(n-1)k_{1}-k_{2}+\frac{qe^{af}}{\rho-f}+\frac{p\hat{A}}{\rho-f} \Big)w.
\end{eqnarray*}
\end{lemma}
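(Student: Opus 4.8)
The plan is to apply the weighted Bochner formula (\ref{e4}) to the auxiliary function $\ell:=\ln(\rho-f)$, which satisfies $w=|\nabla\ell|^{2}$. First I would record the elementary identities. Since $\rho=1+\ln k$ is constant and $f=\ln u\le\ln k$, we have $\rho-f\ge 1>0$, so $\ell$ is smooth; moreover $\nabla\ell=-\frac{\nabla f}{\rho-f}$, whence $w=\frac{|\nabla f|^{2}}{(\rho-f)^{2}}$, $|\nabla f|=(\rho-f)\sqrt{w}$, and, crucially, $\langle\nabla f,\nabla\ell\rangle=-\frac{|\nabla f|^{2}}{\rho-f}=-(\rho-f)w$. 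This last identity is what later generates the good quadratic term; the constant $1$ in $\rho=1+\ln k$ is there only to keep $\rho-f$ bounded away from $0$.

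Next I would compute the heat operator on $\ell$. Using $\Delta_{\phi}\psi(f)=\psi'(f)\Delta_{\phi}f+\psi''(f)|\nabla f|^{2}$ with $\psi(s)=\ln(\rho-s)$, the identity $\partial_{t}\ell=-\frac{f_{t}}{\rho-f}$, and equation (\ref{e3}) for $f$, one gets
\[
(\Delta_{\phi}-\partial_{t})\ell=-\frac{(\Delta_{\phi}-\partial_{t})f}{\rho-f}-w=(\rho-f)w-w-\frac{qe^{af}+p\hat{A}}{\rho-f}=(\ln k-f)\,w-\frac{qe^{af}+p\hat{A}}{\rho-f}.
\]
Then, combining the Bochner formula (\ref{e4}) applied to $\ell$ with the evolution identity of Lemma \ref{l1} applied to $|\nabla\ell|^{2}$, I obtain
\[
\frac{1}{2}(\Delta_{\phi}-\partial_{t})w=|{\rm Hess}\,\ell|^{2}+\big\langle\nabla\big[(\Delta_{\phi}-\partial_{t})\ell\big],\nabla\ell\big\rangle+Ric_{\phi}(\nabla\ell,\nabla\ell)+h(\nabla\ell,\nabla\ell).
\]

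The heart of the argument is expanding the middle term. Differentiating $(\ln k-f)w$ gives $-w\nabla f+(\ln k-f)\nabla w$; pairing with $\nabla\ell$ and using $\langle\nabla f,\nabla\ell\rangle=-(\rho-f)w$ produces the good term $(\rho-f)w^{2}$ together with the transport term $\frac{f-\ln k}{\rho-f}\langle\nabla w,\nabla f\rangle$. Differentiating $\frac{qe^{af}+p\hat{A}}{\rho-f}$, with $\nabla\hat{A}=\hat{A}_{f}\nabla f$ and $\nabla(qe^{af})=e^{af}\nabla q+aqe^{af}\nabla f$, the $\nabla f$-directional parts contribute $\big(p\hat{A}_{f}+aqe^{af}+\frac{qe^{af}+p\hat{A}}{\rho-f}\big)w$ (again via $\langle\nabla f,\nabla\ell\rangle=-(\rho-f)w$), while the leftover $\frac{1}{(\rho-f)^{2}}\langle e^{af}\nabla q+\hat{A}\nabla p,\nabla f\rangle$ is bounded below by $-\frac{1}{\rho-f}\big(e^{af}|\nabla q|+|\hat{A}||\nabla p|\big)\sqrt{w}$ using Cauchy--Schwarz and $|\nabla f|=(\rho-f)\sqrt{w}$. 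Dropping $|{\rm Hess}\,\ell|^{2}\ge 0$, inserting $Ric_{\phi}\ge-(n-1)k_{1}g$ and $h\ge-k_{2}g$, and multiplying through by $2$ yields the stated inequality.

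The main obstacle is purely bookkeeping: tracking the many cross terms with their signs, and recognizing that the quadratic gain $2(\rho-f)w^{2}$ and the transport term both come out of the single expression $2\langle\nabla[(\ln k-f)w],\nabla\ell\rangle$. In contrast with the Li--Yau computation of Lemma \ref{l2}, no refined trace (Bochner) inequality for ${\rm Hess}\,\ell$ is needed here, since the $w^{2}$ term already provides the nonlinear absorption that will drive the maximum-principle step in the proof of Theorem \ref{t4}.
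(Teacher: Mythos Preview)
Your proof is correct and in fact cleaner than the paper's. The paper works directly with $w=\frac{|\nabla f|^{2}}{(\rho-f)^{2}}$ and expands $\Delta_{\phi}w$ and $\partial_{t}w$ term by term in $f$; it then has to recognize by hand the complete square $\frac{1}{(\rho-f)^{2}}\big|{\rm Hess}\,f+\frac{df\otimes df}{\rho-f}\big|^{2}\ge 0$ and the identity $\langle\nabla w,\nabla f\rangle=\frac{2{\rm Hess}\,f(\nabla f,\nabla f)}{(\rho-f)^{2}}+\frac{2|\nabla f|^{4}}{(\rho-f)^{3}}$ in order to extract the transport term. Your substitution $\ell=\ln(\rho-f)$ packages both of these: since ${\rm Hess}\,\ell=-\frac{1}{\rho-f}\big({\rm Hess}\,f+\frac{df\otimes df}{\rho-f}\big)$, the paper's complete square is literally $|{\rm Hess}\,\ell|^{2}$, and the transport term together with the quadratic gain $2(\rho-f)w^{2}$ fall out immediately from $\langle\nabla[(\ln k-f)w],\nabla\ell\rangle$ via the single identity $\langle\nabla f,\nabla\ell\rangle=-(\rho-f)w$. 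The two arguments are of course equivalent, but yours exposes the structure and minimizes the bookkeeping you flag as the main obstacle.
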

\begin{proof}
By the weighted Bochner formula, we get
\begin{eqnarray}\label{e2l4}
\Delta_{\phi}w&=&\frac{2\langle \nabla \Delta_{\phi}f,\nabla f\rangle}{(\rho-f)^{2}}+\frac{2Ric_{\phi}(\nabla f,\nabla f)}{(\rho-f)^{2}}+\frac{2|{\rm Hess}f|^{2}}{(\rho-f)^{2}}\\\nonumber&&
+\frac{8{\rm Hess}(\nabla f, \nabla f)}{(\rho-f)^{3}}+\frac{2|\nabla f|^{2}\Delta_{\phi}f}{(\rho-f)^{3}}+\frac{6|\nabla f|^{4}}{(\rho-f)^{4}}.
\end{eqnarray}
On the other hand
\begin{eqnarray}\label{e3l4}
\partial_{t}w&=&\frac{2\langle \nabla f_{t},\nabla f\rangle}{(\rho-f)^{2}}+\frac{2|\nabla f|^{2}f_{t}}{(\rho-f)^{3}}-\frac{2h(\nabla f,\nabla f)}{(\rho-f)^{2}}.
\end{eqnarray}
Combining the above two equalities, we infer
\begin{eqnarray*}
(\Delta_{\phi}-\partial_{t})w&=&\frac{2\langle \nabla (\Delta_{\phi}-\partial_{t})f,\nabla f\rangle}{(\rho-f)^{2}}+\frac{2Ric_{\phi}(\nabla f,\nabla f)}{(\rho-f)^{2}}+\frac{2|{\rm Hess}f|^{2}}{(\rho-f)^{2}}\\\nonumber&&
+\frac{8{\rm Hess}(\nabla f, \nabla f)}{(\rho-f)^{3}}+\frac{2|\nabla f|^{2}(\Delta_{\phi}-\partial_{t}) f}{(\rho-f)^{3}}+\frac{6|\nabla f|^{4}}{(\rho-f)^{4}}
+\frac{2h(\nabla f,\nabla f)}{(\rho-f)^{2}}.
\end{eqnarray*}
Plugging (\ref{e3}) in above equality, we conclude
\begin{eqnarray*}
(\Delta_{\phi}-\partial_{t})w&=&\frac{2|{\rm Hess}f|^{2}}{(\rho-f)^{2}}+\frac{8{\rm Hess}(\nabla f, \nabla f)}{(\rho-f)^{3}}+\frac{6|\nabla f|^{4}}{(\rho-f)^{4}}-\frac{4{\rm Hess}(\nabla f, \nabla f)}{(\rho-f)^{2}}\\&&
-\frac{2|\nabla f|^{4}}{(\rho-f)^{3}}
+\frac{2\hat{A}\langle \nabla p,\nabla f\rangle}{(\rho-f)^{2}}
+\frac{2p\hat{A}_{f}|\nabla f|^{2}}{(\rho-f)^{2}}
+\frac{2e^{af}\langle \nabla q,\nabla f\rangle}{(\rho-f)^{2}}\\&&
+\frac{2aqe^{af}|\nabla f|^{2}}{(\rho-f)^{2}}
+\frac{2Ric_{\phi}(\nabla f,\nabla f)}{(\rho-f)^{2}}
+\frac{2h(\nabla f,\nabla f)}{(\rho-f)^{2}}
+\frac{2aqe^{af}|\nabla f|^{2}}{(\rho-f)^{3}}\\&&
+\frac{2p\hat{A}|\nabla f|^{2}}{(\rho-f)^{3}}.
\end{eqnarray*}
Since
\begin{equation*}
\frac{|{\rm Hess}f|^{2}}{(\rho-f)^{2}}+\frac{2{\rm Hess}(\nabla f, \nabla f)}{(\rho-f)^{3}}+\frac{|\nabla f|^{4}}{(\rho-f)^{4}}=\frac{1}{(\rho-f)^{2}}\Big|{\rm Hess}+\frac{df\otimes df}{\rho-f} \Big|^{2}\geq0
\end{equation*}
and
\begin{equation*}
\frac{2{\rm Hess}(\nabla f, \nabla f)}{(\rho-f)^{2}}+\frac{2|\nabla f|^{4}}{(\rho-f)^{3}}=\langle \nabla w, \nabla f\rangle,
\end{equation*}
we deduce
\begin{eqnarray*}
(\Delta_{\phi}-\partial_{t})w&\geq&\frac{2\langle \nabla w, \nabla f\rangle}{\rho-f}-2\langle \nabla w, \nabla f\rangle+\frac{2|\nabla f|^{4}}{(\rho-f)^{3}}
+\frac{2\hat{A}\langle \nabla p,\nabla f\rangle}{(\rho-f)^{2}}
+\frac{2p\hat{A}_{f}|\nabla f|^{2}}{(\rho-f)^{2}}\\&&
+\frac{2e^{af}\langle \nabla q,\nabla f\rangle}{(\rho-f)^{2}}
+\frac{2aqe^{af}|\nabla f|^{2}}{(\rho-f)^{2}}
+\frac{2Ric_{\phi}(\nabla f,\nabla f)}{(\rho-f)^{2}}
+\frac{2h(\nabla f,\nabla f)}{(\rho-f)^{2}}\\&&
+\frac{2aqe^{af}|\nabla f|^{2}}{(\rho-f)^{3}}
+\frac{2p\hat{A}|\nabla f|^{2}}{(\rho-f)^{3}}\\
&\geq&
\frac{2(f-\ln k)}{\rho-f}\langle \nabla w, \nabla f\rangle+2(\rho-f)w^{2}
-\frac{2|\hat{A}|| \nabla p|\sqrt{w}}{\rho-f}
+2p\hat{A}_{f}w\\&&
-\frac{2e^{af}| \nabla q|sqrt{w}}{\rho-f}
+2aqe^{af}w-2(n-1)k_{1}w-2k_{2}w\\&&
+\frac{2aqe^{af}w}{\rho-f}
+\frac{2p\hat{A}w}{\rho-f}.
\end{eqnarray*}
This completes the proof of Lemma.
\end{proof}
\begin{proof}[Proof of Theorem \ref{t4}]
Choosing $\psi$ and $\eta$  as in the proof of Theorems \ref{t1} and \ref{t3}. For any $T_{1}\in(0,T]$, let $(x_{3},t_{3})\in Q_{2R,T_{1}}$ be a point  where $\mathcal{H}(x,t)=\eta(x,t)w(x,t)$ achieve  its maximum, and without loss of generality we can assume $\mathcal{H}(x_{3},t_{3})>0$, and then  $w(x_{3},t_{3})>0$. By Lemma \ref{l4} and a similar argument as in the proof of Theorem \ref{t1},  at point $(x_{3},t_{3})$ we have
\begin{eqnarray*}
0&\geq&  (\Delta_{\phi}-\partial_{t})\mathcal{H}= (\Delta_{\phi}-\partial_{t})(\eta w)\\&=&
w(\Delta_{\phi}-\partial_{t})\eta+\eta(\Delta_{\phi}-\partial_{t})w+2\langle\nabla\eta,\nabla w\rangle\\&\geq&
\eta(\Delta_{\phi}-\partial_{t})w
 -\Big(\frac{c_{0}}{R}(n-1)(\sqrt{k_{1}}+\frac{2}{R})+\frac{3c_{1}}{R^{2}}+c_{2}k_{2} \Big)w.
\end{eqnarray*}
By multiplying both sides of above inequality by $\eta$ we can write
 \begin{eqnarray}\nonumber
0&\geq&\frac{2(f-\ln k)}{\rho-f}\eta^{2}\langle \nabla w, \nabla f\rangle+2(\rho-f)\mathcal{H}^{2}\\\label{e1t4}&&
-\frac{2}{\rho-f}\eta^{\frac{3}{2}}\Big( |\hat{A}|\nabla p|+e^{af}|\nabla q|\Big)\sqrt{\mathcal{H}}\\\nonumber&&
-2\Big(|p| |\hat{A}_{f}|+a|q|e^{af}+(n-1)k_{1}+k_{2}+\frac{|q|e^{af}}{\rho-f}+\frac{|p||\hat{A}|}{\rho-f} \Big)\eta \mathcal{H}\\\nonumber&& -\Big(\frac{c_{0}}{R}(n-1)(\sqrt{k_{1}}+\frac{2}{R})+\frac{3c_{1}}{R^{2}}+c_{2}k_{2} \Big)\mathcal{H},
\end{eqnarray}
  at point $(x_{3},t_{3})$. Now we deal with each term  of the right-hand side of above inequality. Using Young's inequality  we have
\begin{eqnarray*}
\frac{2(\ln k-f)}{\rho-f}\eta^{2}\langle \nabla w, \nabla f\rangle&=&\frac{2(f-\ln k)}{\rho-f}\mathcal {H}\langle \nabla \eta, \nabla f\rangle\leq 2\ln( \frac{k}{\tilde{k}})\frac{\sqrt{c_{1}}}{R}\mathcal {H}^{\frac{3}{2}}\\&\leq& \frac{1}{4}\mathcal {H}^{2}+6\big(\ln \frac{k}{\tilde{k}}\big)^{4}\frac{c_{1}^{2}}{R^{4}},
\end{eqnarray*}
\begin{eqnarray*}\nonumber
\frac{2}{\rho-f}\eta^{\frac{3}{2}}\Big( |\hat{A}|\nabla p|+e^{af}|\nabla q|\Big)\sqrt{\mathcal{H}}&\leq&
2\Big(\lambda_{1} \gamma_{2}+k^{a}\sigma_{2}\Big)\sqrt{\mathcal{H}}\\&\leq&
\frac{1}{4}\mathcal {H}^{2}+ \frac{3}{4}\Big(2\lambda_{1} \gamma_{2}+2k^{a}\sigma_{2}\Big)^{\frac{4}{3}},
\end{eqnarray*}
and
\begin{eqnarray*}
&&2\Big(|p| |\hat{A}_{f}|+a|q|e^{af}+(n-1)k_{1}+k_{2}+\frac{|q|e^{af}}{\rho-f}+\frac{|p||\hat{A}|}{\rho-f} \Big)\eta \mathcal{H}\\&&\leq
2\Big(\gamma_{1}\lambda_{2}+(a+1)\sigma_{1}k^{a}+(n-1)k_{1}+k_{2}+\gamma_{1}\lambda_{1}
\Big)\eta \mathcal{H}\\&&\leq
\frac{1}{4}\mathcal {H}^{2}+ 4\Big(\gamma_{1}\lambda_{2}+(a+1)\sigma_{1}k^{a}+(n-1)k_{1}+k_{2}+\gamma_{1}\lambda_{1}
\Big)^{2}.
\end{eqnarray*}
Using Young inequality again we get
\begin{eqnarray*}
&&\Big(\frac{c_{0}}{R}(n-1)(\sqrt{k_{1}}+\frac{2}{R})+\frac{3c_{1}}{R^{2}}+c_{2}k_{2} \Big)\mathcal{H}
\\&&\leq
\frac{1}{4}\mathcal {H}^{2}+\Big(\frac{c_{0}}{R}(n-1)(\sqrt{k_{1}}+\frac{2}{R})+\frac{3c_{1}}{R^{2}}+c_{2}k_{2} \Big)^{2}.
\end{eqnarray*}
Substituting the above four inequality into (\ref{e1t4}), we obtain
\begin{eqnarray*}
0&\geq&2(\rho-f)\mathcal{H}^{2}-\mathcal {H}^{2}
-6\big(\ln \frac{k}{\tilde{k}}\big)^{4}\frac{c_{1}^{2}}{R^{4}}
-\frac{3}{4}\Big(2\lambda_{1} \gamma_{2}+2k^{a}\sigma_{2}\Big)^{\frac{4}{3}}\\&&
-4\Big(\gamma_{1}\lambda_{2}+(a+1)\sigma_{1}k^{a}+(n-1)k_{1}+k_{2}+\gamma_{1}\lambda_{1}
\Big)^{2}\\&&
-\Big(\frac{c_{0}}{R}(n-1)(\sqrt{k_{1}}+\frac{2}{R})+\frac{3c_{1}}{R^{2}}+c_{2}k_{2} \Big)^{2},
\end{eqnarray*}
  at point $(x_{3},t_{3})$. Since $2\mathcal{H}^{2}\leq 2(\rho-f)\mathcal{H}^{2}$ we get
\begin{eqnarray*}
\mathcal{H}^{2}&\leq&
6\big(\ln \frac{k}{\tilde{k}}\big)^{4}\frac{c_{1}^{2}}{R^{4}}
+\frac{3}{4}\Big(2\lambda_{1} \gamma_{2}+2k^{a}\sigma_{2}\Big)^{\frac{4}{3}}\\&&
+4\Big(\gamma_{1}\lambda_{2}+(a+1)\sigma_{1}k^{a}+(n-1)k_{1}+k_{2}+\gamma_{1}\lambda_{1}
\Big)^{2}\\&&
+\Big(\frac{c_{0}}{R}(n-1)(\sqrt{k_{1}}+\frac{2}{R})+\frac{3c_{1}}{R^{2}}+c_{2}k_{2} \Big)^{2},
\end{eqnarray*}
  at point $(x_{3},t_{3})$. As $\eta(x,t)=1$ in $Q_{2R, T_{1}}$  and using inequality $\sqrt{x+y}\leq\sqrt{x}+\sqrt{y}$ for $x,y\geq0$, we have
\begin{eqnarray*}
\frac{|\nabla u|}{u(\rho-\ln u)}&\leq&
6^{\frac{1}{4}}\big(\ln \frac{k}{\tilde{k}}\big)\frac{\sqrt{c_{1}}}{R}
+(\frac{3}{4})^{\frac{1}{4}}\Big(2\lambda_{1} \gamma_{2}+2k^{a}\sigma_{2}\Big)^{\frac{1}{3}}\\&&
+\sqrt{2}\Big(\gamma_{1}\lambda_{2}+(a+1)\sigma_{1}k^{a}+(n-1)k_{1}+k_{2}+\gamma_{1}\lambda_{1}
\Big)^{\frac{1}{2}}\\&&
+\Big(\frac{c_{0}}{R}(n-1)(\sqrt{k_{1}}+\frac{2}{R})+\frac{3c_{1}}{R^{2}}+c_{2}k_{2} \Big)^{\frac{1}{2}}.
\end{eqnarray*}
The above inequality and $\rho-\ln u\leq 1+\ln \frac{k}{\tilde{k}}$ imply that (\ref{et4}).
\end{proof}
Similar to Corollary \ref{c1},  we can deduce  a global gradient estimate  from Theorem \ref{t4} by taking $R\to +\infty$ as follows.
\begin{corollary}\label{c5}
Let $(M, g(0),e^{-\phi_{0}}dv)$  be a complete noncompact weighted Riemannian manifold without boundary, and let $g(t), \phi(t)$ evolve by (\ref{e2}) for $t\in [0,T]$. Let $u$ be a positive solution to (\ref{e1}) in $M$ such that $\tilde{k}^{3}\leq u\leq k^{3}$ for some positive constants $k$ and $\tilde{k}$. Suppose that there exist constants $k_{1}, k_{2}$ such that
\begin{equation*}
Ric_{\phi}\geq -(n-1)k_{1}g,\qquad h\geq -k_{2}g.
\end{equation*}
on $M$. Then   there exist positive constants $c_{0},c_{1},$ and $c_{2}$ such that
\begin{eqnarray*}
\frac{|\nabla u|}{u}&\leq&(1+\ln \frac{k}{\tilde{k}})\Big[
(\frac{3}{4})^{\frac{1}{4}}\Big(2\Lambda_{1} \Gamma_{2}+2k^{a}\Sigma_{2}\Big)^{\frac{1}{3}}+\sqrt{c_{2}k_{2}}\\\nonumber&&
+\sqrt{2}\Big(\Gamma_{1}\Lambda_{2}+(a+1)\Sigma_{1}k^{a}+(n-1)k_{1}+k_{2}+\Gamma_{1}\Lambda_{1}
\Big)^{\frac{1}{2}}
\Big].
\end{eqnarray*}
\end{corollary}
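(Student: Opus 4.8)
The plan is to obtain this global estimate as the limit $R\to+\infty$ of the local estimate (\ref{et4}) of Theorem \ref{t4}, in complete analogy with the passage from Theorem \ref{t3} to Corollary \ref{c4}. First I would check that Theorem \ref{t4} is applicable on $Q_{2R,T}$ for every $x_{0}\in M$ and every $R\geq1$: the hypothesis $h\geq-k_{2}g$ gives $\frac{\partial}{\partial t}g\geq-2k_{2}g$, hence $g(t)\geq e^{-2k_{2}T}g(0)$ for $t\in[0,T]$, so each metric $g(t)$ dominates a complete metric and is therefore itself complete; and the curvature and evolution bounds, being assumed on all of $M$, restrict to the same bounds on each $Q_{2R,T}$. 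Hence (\ref{et4}) holds throughout $Q_{2R,T}$ for all such $R$.

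Next I would fix an arbitrary $(x,t)\in M\times(0,T]$ and apply (\ref{et4}) at this point for every $R$ large enough that $(x,t)\in Q_{2R,T}$. Because $Q_{2R,T}\subset M\times[0,T]$, the local suprema are controlled by the global ones, $\lambda_{i}\leq\Lambda_{i}$, $\gamma_{i}\leq\Gamma_{i}$ and $\sigma_{i}\leq\Sigma_{i}$ for $i=1,2,3$; replacing each local constant on the right of (\ref{et4}) by its global counterpart gives a bound uniform in $R$, with the absolute constants $c_{0},c_{1},c_{2}$ (which come only from the cutoff $\psi$ and the comparison estimates, not from $R$) held fixed. Letting $R\to+\infty$ then sends the term $6^{\frac{1}{4}}\big(\ln\frac{k}{\tilde{k}}\big)\frac{\sqrt{c_{1}}}{R}$ to zero and drives $\big(\frac{c_{0}}{R}(n-1)(\sqrt{k_{1}}+\frac{2}{R})+\frac{3c_{1}}{R^{2}}+c_{2}k_{2}\big)^{\frac{1}{2}}$ to $\sqrt{c_{2}k_{2}}$, while the two $R$-independent terms persist verbatim; this is precisely the claimed inequality, and since $(x,t)$ was arbitrary it holds on all of $M$.

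I do not anticipate a real obstacle, since the argument is essentially bookkeeping; the only point meriting care is to confirm that the localization apparatus used inside the proof of Theorem \ref{t4} — the Calabi trick, the generalized Laplacian comparison bound for $\Delta_{\phi}r$, and the estimate $-F\eta_{t}\geq-c_{2}k_{2}F$ taken from \cite{JS} — furnishes constants $c_{0},c_{1},c_{2}$ genuinely independent of $R$, which is what legitimizes the passage to the limit. I would also note that the estimate is to be applied with the bound $\tilde{k}\leq u\leq k$ of Theorem \ref{t4}, the exponent $3$ in the displayed hypothesis $\tilde{k}^{3}\leq u\leq k^{3}$ appearing to be carried over inadvertently from Corollary \ref{c4}; the shape of the resulting bound is governed throughout by $1+\ln\frac{k}{\tilde{k}}$ and $k^{a}$.
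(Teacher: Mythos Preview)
Your proposal is correct and matches the paper's approach exactly: the paper simply states that Corollary \ref{c5} follows from Theorem \ref{t4} by taking $R\to+\infty$, in analogy with Corollary \ref{c1}, and your argument fleshes out precisely this limiting procedure. Your observation that the hypothesis $\tilde{k}^{3}\leq u\leq k^{3}$ is likely a carry-over typo from Corollary \ref{c4}, and that the bound should be read with $\tilde{k}\leq u\leq k$ as in Theorem \ref{t4}, is well taken.
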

\section{The fourth gradient estimate}
In this section  as  in  \cite{YLZ, FYZ} we obtain another type gradient estimate for positive solution of (\ref{e1}) under the geometric flow (\ref{e2}).

\begin{theorem}\label{t5}
Let $(M,g(0),e^{-\phi_{0}}dv )$ be a complete weighted Riemannian manifold, and let $g(t), \phi(t)$ evolve by (\ref{e2}) for $t\in [0,T]$. Given $x_{0}$ and $R>0$, let $u$ be a positive solution to (\ref{e1}) in $Q_{2R,T}$ such that $\tilde{k}\leq u\leq k$ for some positive constants $k$ and $\tilde{k}$. Suppose that there exist constants $k_{1}, k_{2}$ such that
\begin{equation*}
Ric_{\phi}\geq -(m-1)k_{1}g,\qquad -k_{2}g\leq h\leq k_{3}g,\qquad |\nabla h|\leq k_{4},
\end{equation*}
on $Q_{2R,T}$. Then  there exist positive constants $c_{0},c_{1},$ and $c_{2}$ such that
\begin{eqnarray*}
\frac{|\nabla u|^{2}}{u^{2}}-\frac{1}{b}\frac{u_{t}}{u}-qu^{a}-p\hat{A}\leq \frac{4m}{t}+K_{4}
\end{eqnarray*}
where
\begin{eqnarray*}\nonumber
K_{4}&=& 4m\Big[\frac{2mc_{1}}{bR^{2}}+
a\sigma_{1}    k^{-\frac{a}{b}}+b  \sigma_{1}k\lambda_{2} +\Big(\frac{c_{0}}{R}(n-1)(\sqrt{k_{1}}+\frac{2}{R})+\frac{3c_{1}}{R^{2}}+c_{2}k_{2} \Big)\Big]\\&&+\frac{2m}{b}\sqrt{
\frac{20}{5+8m}}\Big(  2(m-1)k_{1}+
a(a+2){\tilde k}^{-\frac{a}{b}}\sigma_{1} +b^{2} \gamma_{1}k\lambda_{2}+b^{2}\gamma_{1}k^{2}\lambda_{3}\\&& +2(1-b)k_{2} \Big)+\frac{\sqrt{3m}}{b}
\big(\frac{20m}{5+8m}\big)^{\frac{1}{6}}(2ab\sigma_{2})^{\frac{2}{3}}{\tilde k}^{-\frac{2a}{3b}}
+2\sqrt{m\lambda_{1}\gamma_{3}}\\&&+\frac{\sqrt{3m}}{b}
\big(\frac{20m}{5+8m}\big)^{\frac{1}{6}}(2b^{2}\lambda_{2}\gamma_{2})^{\frac{2}{3}}
+2n\sqrt{2m}(k_{2}+k_{3})\\&&+\frac{\sqrt{3m}}{b}
\big(\frac{20m}{5+8m}\big)^{\frac{1}{6}}\Big(3b\sqrt{n}k_{4}+2bk_{3}\theta_{1}+b\theta_{2}\Big)^{\frac{2}{3}},
\end{eqnarray*}
for $0<b\leq 1$ and if $b>1$ then it is enough in the above equality to replace $(1-b)k_{2}$ with $(b-1)k_{3}$.
\end{theorem}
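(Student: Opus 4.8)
The plan is to follow the three–stage scheme already used for Theorems \ref{t1}, \ref{t3} and \ref{t4}: first a weighted Bochner identity/inequality for a suitable auxiliary function, then a localized maximum–principle argument with the cutoff $\eta=\psi(r/R)$, and finally a quadratic–inequality estimate.

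\emph{Step 1: a Bochner lemma.} Set $f=\ln u$, so that $f$ solves (\ref{e3}). Introduce
\[
w:=|\nabla f|^{2}-\tfrac1b f_{t}-qe^{af}-p\hat{A},\qquad F:=tw,
\]
and, exactly as in the derivation of (\ref{1}) in the proof of Lemma \ref{l2}, express $\Delta_{\phi}f$ in terms of $F/t$, $|\nabla f|^{2}$, $f_{t}$ and the reaction terms $qe^{af}$, $p\hat{A}$. Differentiating $F$, using the weighted Bochner formula (\ref{e4}), the commutation identities of Lemma \ref{l1}, and (\ref{e3}), I would obtain an expression for $(\Delta_{\phi}-\partial_{t})F$. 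The essential move is to bound the Hessian term from below, $2t|{\rm Hess}\,f|^{2}\ge \tfrac{2t}{m}(\Delta_{\phi}f)^{2}-\tfrac{2t}{m-n}\langle\nabla f,\nabla\phi\rangle^{2}$ as in (\ref{a10}), controlling the last term by $\theta_{1}$; substituting the expression for $\Delta_{\phi}f$ then produces a term $\tfrac{c_{b}}{mt}\big(\text{const}+(\text{const})\,t\,|\nabla f|^{2}\big)^{2}$ with $c_{b}$ depending on $b$ and $m$. The remaining contributions — $\langle\nabla q,\nabla f\rangle$, $\langle\nabla p,\nabla f\rangle$, $\langle\nabla f,\nabla\Delta\phi\rangle$, the curvature term $Ric_{\phi}(\nabla f,\nabla f)$, and the flow terms $\langle h,{\rm Hess}\,f\rangle$, $|{\rm div}\,h-\tfrac12\nabla({\rm tr}_{g}h)|$, $h(\nabla\phi,\nabla f)$, $h(\nabla f,\nabla f)$ — are handled exactly as in (\ref{07})–(\ref{lygs}) via $Ric_{\phi}\ge-(m-1)k_{1}g$, the two–sided bound on $h$, $|\nabla h|\le k_{4}$, and Young's inequality. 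When $0<b\le1$ the term $h(\nabla f,\nabla f)$ is estimated using $h\ge-k_{2}g$, which yields the factor $(1-b)k_{2}$; when $b>1$ one uses $h\le k_{3}g$, hence the stated replacement of $(1-b)k_{2}$ by $(b-1)k_{3}$.

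\emph{Step 2: localization.} Take $\eta(x,t)=\psi(r(x,t)/R)$ as in the proof of Theorem \ref{t1}. Since $Ric_{\phi}\ge-(m-1)k_{1}g$, the generalized Laplacian comparison theorem gives the estimates (\ref{t11})–(\ref{t12}) for $\Delta_{\phi}\eta$ and $|\nabla\eta|^{2}/\eta$, and $-F\eta_{t}\ge-c_{2}k_{2}F$ by \cite{JS}. Fix $T_{1}\in(0,T]$ and let $G=\eta F$ attain its maximum over $Q_{2R,T_{1}}$ at $(x_{0},t_{0})$; we may assume $G(x_{0},t_{0})>0$, so that $\nabla F=-(F/\eta)\nabla\eta$ and $0\ge F(\Delta_{\phi}-\partial_{t})\eta+\eta(\Delta_{\phi}-\partial_{t})F+2\langle\nabla\eta,\nabla F\rangle$ there. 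Writing $\mu=|\nabla f|^{2}(x_{0},t_{0})/F(x_{0},t_{0})\ge0$, plugging in the inequality of Step 1, and multiplying through by $t_{0}\eta$ gives, exactly as in (\ref{107})–(\ref{207}), an inequality of the shape
\[
0\ge \mathcal A\big(1+\beta t_{0}\mu\big)^{2}G^{2}-\frac{c\sqrt{c_{1}}}{R}\,t_{0}\mu^{1/2}G^{3/2}-(\cdots)G-(\cdots),
\]
with $\mathcal A=\mathcal A(b,m)$ and the remaining brackets involving $\gamma_{i},\sigma_{i},\theta_{i},k_{i}$, $k$, $\tilde k$ and the cutoff constants.

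\emph{Step 3: closing the estimate, and the main difficulty.} Each ``bad'' term is absorbed by Young's inequality: $\tfrac{c\sqrt{c_{1}}}{R}t_{0}\mu^{1/2}G^{3/2}$ into the $\mu G^{2}$ part of $\mathcal A(1+\beta t_{0}\mu)^{2}G^{2}$, and each term linear in $|\nabla f|=\sqrt{\mu G}$ (the $\nabla q$, $\nabla p$, $\nabla\Delta\phi$ and $k_{4}|\nabla f|$ terms) into the $\mu^{2}G^{2}$ part, splitting the reservoir so that a definite positive multiple of $G^{2}$ survives. This optimization — distributing $(1+\beta t_{0}\mu)^{2}G^{2}$ between the three powers $G^{2},\mu G^{2},\mu^{2}G^{2}$ so as to keep the leading coefficient as large as possible while still killing every cross term — is exactly where the numerical factor $\tfrac{20m}{5+8m}$ and the exponents $(\cdot)^{2/3}$ in $K_{4}$ come from, and is the one genuinely delicate computation; the geometric–flow corrections $h$, ${\rm div}\,h$, $h(\nabla\phi,\nabla f)$ add several more terms to be split against the same reservoir, which is what forces the precise shape of the constants. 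One is left with a quadratic inequality $\tilde a G^{2}-\tilde b G-\tilde c\le0$, $\tilde a,\tilde b,\tilde c\ge0$, so $G\le \tilde b/\tilde a+\sqrt{\tilde c/\tilde a}$; taking $\eta\equiv1$ on $\{d(\cdot,x_{0},T_{1})\le R\}$, using $\tilde k\le u\le k$ to convert $e^{\pm af/b}$ into the powers $k^{\mp a/b},\tilde k^{\mp a/b}$, and dividing by $T_{1}$ gives $w(x,T_{1})\le \tfrac{4m}{T_{1}}+K_{4}$. Since $T_{1}\in(0,T]$ was arbitrary, this is the asserted estimate.
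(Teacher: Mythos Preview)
Your proposal has a genuine structural gap: you work with the logarithmic substitution $f=\ln u$ and try to imitate the Li--Yau calculation of Lemma~\ref{l2}, but the paper's proof of Theorem~\ref{t5} rests on the \emph{power} substitution $s=u^{-b}$ (Lemma~\ref{l5}), and this choice is not cosmetic.

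With $s=u^{-b}$ the auxiliary quantity is $S=\dfrac{|\nabla s|^{2}}{s^{2}}+b\dfrac{s_{t}}{s}-b^{2}qs^{-a/b}-b^{2}p\hat{A}$. Applying the weighted Bochner formula to $|\nabla s|^{2}/s^{2}$ (not to $|\nabla f|^{2}$) produces, after the Cauchy splitting $2s^{-3}\mathrm{Hess}\,s(\nabla s,\nabla s)\le\tfrac{5}{4}s^{-2}|\mathrm{Hess}\,s|^{2}+\tfrac{4}{5}|\nabla\ln s|^{4}$ and the absorption of $2bs^{-1}\langle h,\mathrm{Hess}\,s\rangle$, the \emph{two} separate positive reservoirs
\[
\frac{1}{4m}\Big(\frac{S}{b}+|\nabla\ln s|^{2}\Big)^{2}
\qquad\text{and}\qquad
\frac{2}{5}\,|\nabla\ln s|^{4}.
\]
The first comes from $|\mathrm{Hess}\,s|^{2}\ge (\Delta_{\phi}s)^{2}/m$ together with $s^{-1}\Delta_{\phi}s=\tfrac{S}{b}+|\nabla\ln s|^{2}$; the second is an extra quartic term generated by the $s^{-2}$ weight in $|\nabla s|^{2}/s^{2}$, which your logarithmic variable never sees. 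Summing the $|\nabla\ln s|^{4}$ coefficients gives $\tfrac{1}{4m}+\tfrac{2}{5}=\tfrac{5+8m}{20m}$, and this is precisely why the factor $\tfrac{20m}{5+8m}$ appears in $K_{4}$; it is not an artifact of a Young-type optimization as you suggest.

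Your scheme cannot reproduce this. In your variables, $-\Delta_{\phi}f=(1-b)(|\nabla f|^{2}-qe^{af}-p\hat{A})+b\,w$, so $(\Delta_{\phi}f)^{2}/m$ yields a $|\nabla f|^{4}$ term with coefficient proportional to $(1-b)^{2}$. At $b=1$ this reservoir vanishes entirely (indeed $w=-\Delta_{\phi}f$ then), and for $b$ near $1$ it is too small to absorb the terms linear and quadratic in $|\nabla f|$; there is no analogue of the $\tfrac{2}{5}|\nabla\ln s|^{4}$ to compensate. Hence your Step~3 cannot close for the full range $b>0$, and even where it does close, it will not deliver the stated $K_{4}$. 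The fix is to switch to $s=u^{-b}$ and follow Lemma~\ref{l5}.
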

Before we prove Theorem \ref{t5}, firstly we derive the following lemma.
\begin{lemma}\label{l5}
Let  $(M^{n},g(t),e^{-\phi}dv)$  be a weighted  Riemannian manifold, $g(t)$ evolves by (\ref{e2}) for $t\in[0,T]$ satisfies the hypotheses of Theorem \ref{t5}. If $s=u^{-b}$  and  $S=\frac{|\nabla s|^{2}}{s^{2}}+b\frac{s_{t}}{s}-b^{2}qs^{-\frac{a}{b}}-b^{2}p\hat{A}$ where $b$ is a given positive constant, then  we have
\begin{eqnarray}\nonumber
(\Delta_{\phi}-\partial_{t})S&\geq&\frac{1}{4m}\big(\frac{S}{b}+|\nabla \ln s|^{2}\big) ^{2}+\frac{2}{5}|\nabla \ln s|^{4}+\frac{2}{b}\langle \nabla S, \nabla \ln s\rangle\\\nonumber&&
-2(m-1)k_{1}|\nabla \ln s|^{2}-a(a+2)s^{-(2+\frac{a}{b})}q|\nabla s|^{2}\\\nonumber&&+ 2abs^{-(1+\frac{a}{b})}\langle \nabla q, \nabla s\rangle+aqs^{-\frac{a}{b}}S-b^{2}\hat{A}\Delta_{\phi}p-b^{2}ps\hat{A}_{s}\big(\frac{S}{b}+|\nabla \ln s|^{2}\big)\\\label{el5}&&
-b^{2}p\hat{A}_{ss}|\nabla s|^{2}-2b^{2}\hat{A}_{s}\langle \nabla s,\nabla p\rangle\\\nonumber&&
-2(1-b)k_{2}|\nabla \ln s|^{2}-2b^{2}n(k_{2}+k_{3})^{2}\\\nonumber&&-\Big(3b\sqrt{n}k_{4}+2bk_{3}\theta_{1}+b\theta_{2}\Big)|\nabla \ln s|,
\end{eqnarray}
for $0<b\leq 1$ and if $b>1$ then it is enough in the above equality to replace $(1-b)k_{2}$ with $(b-1)k_{3}$.
\end{lemma}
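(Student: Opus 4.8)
The plan is to run the Bochner-type computation that underlies Lemma \ref{l2}, but driven by the substitution $s=u^{-b}$ in place of $f=\ln u$. First I would record the algebraic identities that make the quantities in the statement natural. Setting $\varphi:=\ln s=-b\ln u$, equation (\ref{e3}) transforms into $(\Delta_\phi-\partial_t)\varphi=\tfrac1b|\nabla\varphi|^2-bqs^{-a/b}-bp\hat A$, where $s^{-a/b}=u^a$ and $\hat A$ is regarded as a function of $s$ via $\hat A_s=-\tfrac1{bs}\hat A_f$. Using $s=e^\varphi$ together with this equation one checks the two relations that organize the whole proof: $S=b\,\Delta_\phi\varphi$ and consequently $\Delta_\phi s=s\big(\tfrac Sb+|\nabla\ln s|^2\big)$; the combination $\tfrac Sb+|\nabla\ln s|^2$ is thus exactly $\Delta_\phi s/s$, which is why it recurs throughout (\ref{el5}).

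Next I would differentiate $S$. Writing $(\Delta_\phi-\partial_t)S$ and using the second identity of Lemma \ref{l1} to commute $\Delta_\phi$ with $\partial_t$ reduces it to $b\,\Delta_\phi\!\big((\Delta_\phi-\partial_t)\varphi\big)$ plus the curvature/flow correction terms $2b\langle h,\mathrm{Hess}\,\varphi\rangle$, $2b\langle\mathrm{div}\,h-\tfrac12\nabla\,\mathrm{tr}_g h,\nabla\varphi\rangle$, $-2bh(\nabla\phi,\nabla\varphi)$ and $b\langle\nabla\varphi,\nabla\Delta\phi\rangle$; equivalently, keeping $S$ in the form $|\nabla\varphi|^2+b\varphi_t-b^2qs^{-a/b}-b^2p\hat A$ and applying the first identity of Lemma \ref{l1} to the $|\nabla\varphi|^2$ and $\varphi_t$ pieces produces the extra term $2(1-b)h(\nabla\varphi,\nabla\varphi)$, whose sign forces the case split $0<b\le1$ versus $b>1$ through $h\ge-k_2g$ or $h\le k_3g$. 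I would then insert the transformed PDE for $(\Delta_\phi-\partial_t)\varphi$ and apply the weighted Bochner formula (\ref{e4}) to the $|\nabla\varphi|^2$ term; using $\Delta_\phi\varphi=S/b$ the term $\langle\nabla\Delta_\phi\varphi,\nabla\varphi\rangle$ collapses to $\tfrac1b\langle\nabla S,\nabla\ln s\rangle$, and $Ric_\phi\ge-(m-1)k_1g$ replaces $Ric_\phi(\nabla\varphi,\nabla\varphi)$ by $-2(m-1)k_1|\nabla\ln s|^2$. The remaining $p,q,\hat A$-dependent terms are pure bookkeeping: expanding $\Delta_\phi(qs^{-a/b})$ and $\Delta_\phi(p\hat A)$ by the product and chain rules and systematically substituting $\Delta_\phi s=s\big(\tfrac Sb+|\nabla\ln s|^2\big)$ wherever it occurs yields precisely the lower-order terms in (\ref{el5}) (the $q$-terms $-a(a+2)s^{-(2+a/b)}q|\nabla s|^2$, $2abs^{-(1+a/b)}\langle\nabla q,\nabla s\rangle$, $aqs^{-a/b}S$, and the $p$-terms built from $\hat A$, $\hat A_s$, $\hat A_{ss}$).

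The one genuinely delicate point is extracting the good quadratic terms $\tfrac1{4m}\big(\tfrac Sb+|\nabla\ln s|^2\big)^2+\tfrac25|\nabla\ln s|^4$ out of the single Hessian term $2|\mathrm{Hess}\,\varphi|^2$. Here I would complete the square by writing $\mathrm{Hess}\,\varphi=\tfrac{\mathrm{Hess}\,s}{s}-\nabla\ln s\otimes\nabla\ln s$, so that $\tfrac Sb+|\nabla\ln s|^2=\Delta_\phi s/s$ is the $\phi$-trace of $\tfrac{\mathrm{Hess}\,s}{s}$, split $|\mathrm{Hess}\,\varphi|^2$ into its component along $\nabla\varphi$ and the orthogonal part, and apply the refined ($m$-dimensional) Cauchy--Schwarz inequality of the type used to obtain (\ref{a10}); optimizing the weights in this split and in the accompanying Young inequalities produces exactly the constants $\tfrac1{4m}$ and $\tfrac25$ (and, through the constant $\tfrac{20}{5+8m}$ used afterwards in Theorem \ref{t5}, explains their particular shape). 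The term $2b^2n(k_2+k_3)^2$ then comes from absorbing $2b\langle h,\mathrm{Hess}\,\varphi\rangle$ by Young's inequality together with $|h|^2\le n(k_2+k_3)^2$ (as in (\ref{07})), while $|\mathrm{div}\,h-\tfrac12\nabla\,\mathrm{tr}_g h|\le\tfrac32\sqrt n k_4$, $|h(\nabla\phi,\nabla\varphi)|\le k_3\theta_1|\nabla\ln s|$ and $|\langle\nabla\varphi,\nabla\Delta\phi\rangle|\le\theta_2|\nabla\ln s|$ give the closing term $-(3b\sqrt n k_4+2bk_3\theta_1+b\theta_2)|\nabla\ln s|$. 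I expect the main obstacle to be precisely this Hessian bookkeeping — retaining enough of $2|\mathrm{Hess}\,\varphi|^2$ to simultaneously feed the $\tfrac1{4m}(\cdot)^2$ term, the $\tfrac25|\nabla\ln s|^4$ term and the Young absorption of $\langle h,\mathrm{Hess}\,\varphi\rangle$ — together with tracking the many lower-order $p,q,\hat A$ terms without sign errors; once these are organized, assembling (\ref{el5}) is routine.
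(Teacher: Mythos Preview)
Your plan follows essentially the same route as the paper's proof. The paper does not pass through $\varphi=\ln s$ or the clean identity $S=b\Delta_\phi\varphi$; instead it computes $\Delta_\phi S$ and $\partial_t S$ term by term directly in $s$, substitutes the evolution $(\Delta_\phi-\partial_t)s=-bps\hat A-bqs^{1-a/b}+\tfrac{b+1}{b}\tfrac{|\nabla s|^2}{s}$, and arrives at an exact identity (equation (\ref{e2s5})) whose Bochner block is $2s^{-2}\bigl(|\mathrm{Hess}\,s|^2-2s^{-1}\mathrm{Hess}\,s(\nabla s,\nabla s)+s^{-2}|\nabla s|^4\bigr)$ --- exactly your $2|\mathrm{Hess}\,\varphi|^2$. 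The flow corrections appear as $2(1-b)s^{-2}h(\nabla s,\nabla s)+2bs^{-1}\langle h,\mathrm{Hess}\,s\rangle$ together with the divergence, $h(\nabla\phi,\cdot)$ and $\nabla\Delta\phi$ terms you list; this matches your ``Approach B'' decomposition rather than the $S=b\Delta_\phi\varphi$ shortcut.

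The one place your outline is organized differently is the Hessian extraction. The paper keeps everything in $\mathrm{Hess}\,s$, not $\mathrm{Hess}\,\varphi$: it first absorbs $2bs^{-1}\langle h,\mathrm{Hess}\,s\rangle$ by Young with the specific weight $\epsilon=\tfrac{1}{4b}$, which costs $\tfrac12 s^{-2}|\mathrm{Hess}\,s|^2$ and produces the $2b^2n(k_2+k_3)^2$ term; it then controls the cross term by the single Cauchy inequality $2s^{-3}\mathrm{Hess}\,s(\nabla s,\nabla s)\le\tfrac{5}{4}s^{-2}|\mathrm{Hess}\,s|^2+\tfrac{4}{5}|\nabla\ln s|^4$, leaving $\tfrac14 s^{-2}|\mathrm{Hess}\,s|^2+\tfrac25|\nabla\ln s|^4$; finally (\ref{a10}) applied to $s$ turns $\tfrac14 s^{-2}|\mathrm{Hess}\,s|^2$ into $\tfrac{1}{4m}(s^{-1}\Delta_\phi s)^2=\tfrac{1}{4m}\bigl(\tfrac{S}{b}+|\nabla\ln s|^2\bigr)^2$. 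So the constants $\tfrac14$ and $\tfrac25$ are not the output of an optimization over splittings of $|\mathrm{Hess}\,\varphi|^2$ as you describe, but come from these two fixed Young choices in the $s$-variable. If you insist on staying in $\varphi$, note that (\ref{a10}) applied directly to $\varphi$ yields $(\Delta_\phi\varphi)^2/m=S^2/(b^2m)$, \emph{not} the combination $\bigl(\tfrac{S}{b}+|\nabla\ln s|^2\bigr)^2$ appearing in (\ref{el5}); you must first rewrite $\mathrm{Hess}\,\varphi=s^{-1}\mathrm{Hess}\,s-\nabla\ln s\otimes\nabla\ln s$ (as you already indicate) and then run the paper's two Young steps on $\mathrm{Hess}\,s$.
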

\begin{proof}
Since  $s=u^{-b}$ we have $s_{t}=-bu^{-b-1}u_{t}$, $\nabla s=-bu^{-b-1}\nabla u$ and
\begin{equation*}
\Delta_{\phi}s=-bu^{-b-1}\Delta_{\phi}u+b(b+1)u^{-b-2}|\nabla u|^{2}.
\end{equation*}
Thus, from (\ref{e3}) we get
\begin{equation}\label{e1s5}
(\Delta_{\phi}-\partial_{t})s=-bps \hat{A}-bqs^{1-\frac{a}{b}}+\frac{b+1}{b}\frac{ |\nabla s|^{2}}{s}.
\end{equation}
By the weighted Bochner formula  and Lemma \ref{l1} we have
\begin{eqnarray*}
\Delta_{\phi}S&=&2s^{-2}|{\rm Hess} s|^{2}+2s^{-2}\langle \nabla \Delta_{\phi}s, \nabla s\rangle+2s^{-2}Ric_{\phi}(\nabla s,\nabla s)\\&&-8s^{-3}{\rm Hess} s(\nabla s, \nabla s)-2s^{-3}|\nabla s|^{2}\Delta_{\phi}s+6s^{-4}|\nabla s|^{4}-b^{2}s^{-\frac{a}{b}}\Delta_{\phi}q\\&&
+2abs^{-(1+\frac{a}{b})}\langle \nabla q, \nabla s\rangle-a(a+b)s^{-(2+\frac{a}{b})}q|\nabla s|^{2}+abs^{-(1+\frac{a}{b})}q\Delta_{\phi}s\\&&-b^{2}\Delta_{\phi}(p\hat{A})
+bs^{-1}(\Delta_{\phi} s)_{t}-bs^{-2}(|\nabla s|^{2})_{t}-2bs^{-2}h(\nabla s,\nabla s)\\&&-bs^{-2}s_{t}\Delta_{\phi} s+2bs^{-3}s_{t}|\nabla s|^{2}
+2bs^{-1}\langle h,{\rm Hess} s\rangle\\&&+2bs^{-1}\langle {\rm div}h-\frac{1}{2}\nabla({\rm tr}_{g}h),\nabla s\rangle -2bs^{-1}h(\nabla \phi,\nabla s)+bs^{-1}\langle \nabla s, \nabla \Delta \phi\rangle.
\end{eqnarray*}
Using again Lemma \ref{l1} we obtain
\begin{eqnarray*}
\partial_{t}S&=&-2s^{-2}h(\nabla s,\nabla s)+2s^{-2}\langle \nabla s_{t}, \nabla s\rangle-2s^{-3}s_{t}|\nabla s|^{2}-b^{2}q_{t}s^{-\frac{a}{b}}\\&&+abqs^{-(1+\frac{a}{b})}s_{t}-b^{2}(p\hat{A})_{t}+bs^{-1}s_{tt}-bs^{-2}s_{t}^{2}.
\end{eqnarray*}
Combining the above two equalities, we derive
\begin{eqnarray*}
(\Delta_{\phi}-\partial_{t})S&=&2s^{-2}|{\rm Hess} s|^{2}+2s^{-2}\langle \nabla (\Delta_{\phi}-\partial_{t})s, \nabla s\rangle+2s^{-2}Ric_{\phi}(\nabla s,\nabla s)\\&&-8s^{-3}{\rm Hess} s(\nabla s, \nabla s)-2s^{-3}|\nabla s|^{2}(\Delta_{\phi}-\partial_{t})s+6s^{-4}|\nabla s|^{4}\\&&-b^{2}s^{-\frac{a}{b}}(\Delta_{\phi}-\partial_{t})q
+2abs^{-(1+\frac{a}{b})}\langle \nabla q, \nabla s\rangle\\&&-a(a+b)s^{-(2+\frac{a}{b})}q|\nabla s|^{2}+abs^{-(1+\frac{a}{b})}q(\Delta_{\phi}-\partial_{t})s\\&&-b^{2}(\Delta_{\phi}-\partial_{t})(p\hat{A})+2(1-b)s^{-2}h(\nabla s,\nabla s)
+bs^{-1}\partial_{t} (\Delta_{\phi}- \partial_{t} )s\\&&-bs^{-2}|\nabla s|^{2}-bs^{-2}s_{t}(\Delta_{\phi} -\partial_{t} )s+2bs^{-3}s_{t}|\nabla s|^{2}\\&&
+2bs^{-1}\langle h,{\rm Hess} s\rangle+2bs^{-1}\langle {\rm div}h-\frac{1}{2}\nabla({\rm tr}_{g}h),\nabla s\rangle\\&& -2bs^{-1}h(\nabla \phi,\nabla s)+bs^{-1}\langle \nabla s, \nabla \Delta \phi\rangle.
\end{eqnarray*}
Substituting (\ref{e1s5}) in above identity we  infer
\begin{eqnarray}\nonumber
(\Delta_{\phi}-\partial_{t})S&=&2s^{-2}\Big(|{\rm Hess} s|^{2}-2s^{-1}{\rm Hess} s(\nabla s,\nabla s)+s^{-2}|\nabla s|^{4} \Big)+\frac{2}{bs}\langle \nabla S, \nabla s\rangle\\\nonumber&&
+2s^{-2}Ric_{\phi}(\nabla s,\nabla s)-a(a+2)s^{-(2+\frac{a}{b})}q|\nabla s|^{2}\\\nonumber&&+ 2abs^{-(1+\frac{a}{b})}\langle \nabla q, \nabla s\rangle+aqs^{-\frac{a}{b}}S-b^{2}\hat{A}\Delta_{\phi}p-b^{2}p\hat{A}_{s}\Delta_{\phi}s\\\label{e2s5}&&-b^{2}p\hat{A}_{ss}|\nabla s|^{2}-2b^{2}\hat{A}_{s}\langle \nabla s,\nabla p\rangle+2(1-b)s^{-2}h(\nabla s,\nabla s)\\\nonumber&&
+2bs^{-1}\langle h,{\rm Hess} s\rangle+2bs^{-1}\langle {\rm div}h-\frac{1}{2}\nabla({\rm tr}_{g}h),\nabla s\rangle\\\nonumber&& -2bv^{-1}h(\nabla \phi,\nabla s)+bs^{-1}\langle \nabla s, \nabla \Delta \phi\rangle.
\end{eqnarray}
 If $b\leq 1$ then   from (\ref{07})-(\ref{9}), for  $\epsilon=\frac{1}{4b}$ we have
\begin{eqnarray}\nonumber
&&2(1-b)s^{-2}h(\nabla s,\nabla s)
+2bs^{-1}\langle h,{\rm Hess} s\rangle+2bs^{-1}\langle {\rm div}h-\frac{1}{2}\nabla({\rm tr}_{g}h),\nabla s\rangle\\\nonumber&& -2bs^{-1}h(\nabla \phi,\nabla s)+bs^{-1}\langle \nabla s, \nabla \Delta \phi\rangle\\\label{e3s5}&&\geq
-2(1-b)k_{2}|\nabla \ln s|^{2}-\frac{1}{2} s^{-2}|{\rm Hess} s|^{2}-2b^{2}n(k_{2}+k_{3})^{2}\\\nonumber&&-3b\sqrt{n}k_{4}|\nabla \ln s|-2bk_{3}\theta_{1}|\nabla \ln s|-b\theta_{2}|\nabla \ln s|
\end{eqnarray}
By Cauchy's inequality,  we have
\begin{equation}\label{e4s5}
2s^{-3}{\rm Hess }s(\nabla s,\nabla s)\leq \frac{5}{4} s^{-2}|{\rm Hess}s|^{2}+\frac{4}{5}|\nabla \ln s|^{4}.
\end{equation}
Plugging (\ref{e3s5}) and (\ref{e3s5}) into (\ref{e2s5}) we obtain
\begin{eqnarray}\nonumber
(\Delta_{\phi}-\partial_{t})S&\geq&\frac{1}{4}s^{-2}|{\rm Hess} s|^{2}+\frac{2}{5}|\nabla \ln s|^{4}+\frac{2}{b}\langle \nabla S, \nabla \ln s\rangle\\\nonumber&&
+2s^{-2}Ric_{\phi}(\nabla s,\nabla s)-a(a+2)s^{-(2+\frac{a}{b})}q|\nabla s|^{2}\\\nonumber&&+ 2abs^{-(1+\frac{a}{b})}\langle \nabla q, \nabla s\rangle+aqs^{-\frac{a}{b}}S-b^{2}\hat{A}\Delta_{\phi}p-b^{2}p\hat{A}_{s}\Delta_{\phi}s\\\label{e5s5}&&
-b^{2}p\hat{A}_{ss}|\nabla s|^{2}-2b^{2}\hat{A}_{s}\langle \nabla s,\nabla p\rangle\\\nonumber&&
-2(1-b)k_{2}|\nabla \ln s|^{2}-2b^{2}n(k_{2}+k_{3})^{2}\\\nonumber&&-3b\sqrt{n}k_{4}|\nabla \ln s|-2bk_{3}\theta_{1}|\nabla \ln s|-b\theta_{2}|\nabla \ln s|.
\end{eqnarray}
Applying (\ref{10})  and  $s^{-1}\Delta_{\phi} s=\frac{S}{b}+|\nabla \ln s|^{2}$ to (\ref{e5s5}), we arrive at (\ref{el5}). If $b>1$ then it is enough in the above equality to replace $(1-b)k_{2}$ with $(b-1)k_{3}$.
\end{proof}
\begin{proof}[Proof of theorem \ref{t5}]
We Choose $\psi$ and $\eta$  as in the proof of Theorem \ref{t1} and define   $W(x,t)=tS(x,t)$. For any $T_{1}\in(0,T]$, let $(x_{4},t_{4})\in Q_{2R,T_{1}}$ be a point  where $\mathcal{B}(x,t)=\eta(x,t)W(x,t)$ achieve  its maximum, and without loss of generality we can assume $b\leq1$ and  $\mathcal{B}(x_{4},t_{4})>0$, and then  $W(x_{4},t_{4})>0$. By  a similar argument as in the proof of Theorem \ref{t1},  at   point $(x_{4},t_{4})$  we have
\begin{eqnarray*}
0&\geq&  (\Delta_{\phi}-\partial_{t})\mathcal{B}= (\Delta_{\phi}-\partial_{t})(\eta W)\\&=&
W(\Delta_{\phi}-\partial_{t})\eta+\eta(\Delta_{\phi}-\partial_{t})W+2\langle\nabla\eta,\nabla W\rangle\\&\geq&
\eta t (\Delta_{\phi}-\partial_{t})S-\eta S
 -\Big(\frac{c_{0}}{R}(n-1)(\sqrt{k_{1}}+\frac{2}{R})+\frac{3c_{1}}{R^{2}}+c_{2}k_{2} \Big)W.
\end{eqnarray*}
Multiplying the  inequality by $ t\eta$  on both sides  and using Lemma \ref{l5} we can write
\begin{eqnarray}\nonumber
0&\geq&
\frac{1}{4m}\eta^{2}\big(\frac{W}{b}+t|\nabla \ln s|^{2}\big) ^{2}+\frac{2}{5}(\eta t)^{2}|\nabla \ln s|^{4}+\frac{2(\eta t)^{2}}{b}\langle \nabla S, \nabla \ln s\rangle\\\nonumber&&
-2(m-1)k_{1}(\eta t)^{2}|\nabla \ln s|^{2}-a(a+2)(\eta t)^{2} s^{-\frac{a}{b}}q|\nabla \ln s|^{2}\\\nonumber&&+ 2ab(\eta t)^{2}s^{-\frac{a}{b}}\langle \nabla q, \nabla \ln s\rangle+aq(\eta t)^{2}s^{-\frac{a}{b}}S-(\eta t)^{2}b^{2}\hat{A}\Delta_{\phi}p\\\nonumber&&
-b^{2}\eta^{2} t ps\hat{A}_{s}\big(\frac{W}{b}+t|\nabla \ln s|^{2}\big)
-b^{2}(\eta t)^{2}ps^{2}\hat{A}_{ss}|\nabla \ln s|^{2}\\\nonumber&&-2b^{2}(\eta t)^{2}s\hat{A}_{s}\langle \nabla  \ln s,\nabla p\rangle
-2(1-b)k_{2}(\eta t)^{2}|\nabla \ln s|^{2}-2nb^{2}(\eta t)^{2}(k_{2}+k_{3})^{2}\\\label{1et5}&&-(\eta t)^{2}\Big(3b\sqrt{n}k_{4}+2bk_{3}\theta_{1}+b\theta_{2}\Big)|\nabla \ln s|-t\eta^{2} S\\\nonumber&&
 -t\eta\Big(\frac{c_{0}}{R}(n-1)(\sqrt{k_{1}}+\frac{2}{R})+\frac{3c_{1}}{R^{2}}+c_{2}k_{2} \Big)W,
\end{eqnarray}
at   point $(x_{4},t_{4})$ .
Now we obtain  a lower bound for each term of the right-hand side of above inequality. In point $(x_{4},t_{4})$, Cauchy's inequality and Young's inequality imply that
\begin{eqnarray}\nonumber
\frac{2(\eta t)^{2}}{b}\langle \nabla S, \nabla \ln s\rangle&=&-\frac{2\eta t}{b}W\langle \nabla \eta, \nabla \ln s\rangle\geq- \frac{2}{b}\eta^{\frac{3}{2}}t \frac{\sqrt{c_{1}}}{R}W| \nabla \ln s|\\\label{2et5}&\geq&-\frac{t}{2mb}\eta \mathcal{B}|\nabla\ln s|^{2}-\frac{2mtc_{1}}{bR^{2}}\mathcal{B},
\end{eqnarray}
and
\begin{eqnarray}\nonumber
&&\Big(  2(m-1)k_{1}+ a(a+2) s^{-\frac{a}{b}}q+b^{2} ps\hat{A}_{s}+b^{2}ps^{2}\hat{A}_{ss}+2(1-b)k_{2}\Big)|\nabla \ln s|^{2}\\\label{3et5}&&\leq
\Big(  2(m-1)k_{1}+ a(a+2){\tilde k}^{-\frac{a}{b}}\sigma_{1}+b^{2} \gamma_{1}k\lambda_{2}+b^{2}\gamma_{1}k^{2}\lambda_{3}
+2(1-b)k_{2} \Big)|\nabla \ln s|^{2}
\\\nonumber&&\leq\frac{5+8m}{80m}|\nabla \ln s|^{4}+\frac{20m}{5+8m}\Big(  2(m-1)k_{1}+
a(a+2){\tilde k}^{-\frac{a}{b}}\sigma_{1} +b^{2} \gamma_{1}k\lambda_{2}\\\nonumber&&
+b^{2}\gamma_{1}k^{2}\lambda_{3} +2(1-b)k_{2} \Big)^{2}.
\end{eqnarray}
Using Young's inequality again we arrive at
\begin{eqnarray}\nonumber
 2abs^{-\frac{a}{b}}\langle \nabla q, \nabla \ln s\rangle&\leq&  2ab{\tilde k}^{-\frac{a}{b}}\sigma_{2} |\nabla \ln s|\\\label{4et5}&\leq&\frac{5+8m}{80m}|\nabla \ln s|^{4}+\frac{3}{4}\big(\frac{20m}{5+8m}\big)^{\frac{1}{3}}(2ab\sigma_{2})^{\frac{4}{3}}{\tilde k}^{-\frac{4a}{3b}}.
\end{eqnarray}
Also, we have
$
aq(\eta t)^{2}s^{-\frac{a}{b}}S\geq -a\sigma_{1}  \eta t k^{-\frac{a}{b}}\mathcal{B}
$,
$
b^{2}\hat{A}\Delta_{\phi}p\leq  b^{2}\lambda_{1}\gamma_{3}
$, and
\begin{eqnarray}\nonumber
b\eta^{2} t ps\hat{A}_{s}W\leq b t\eta \sigma_{1}k\lambda_{5}\mathcal{B}.
\end{eqnarray}
According Young's inequality,
\begin{eqnarray}\nonumber
2b^{2}s\hat{A}_{s}\langle \nabla \ln s,\nabla p\rangle&\leq& 2b^{2}\lambda_{2}\gamma_{2}|\nabla \ln s|
\\\label{5et5}&\leq&
\frac{5+8m}{80m}|\nabla \ln s|^{4}+\frac{3}{4}\big(\frac{20m}{5+8m}\big)^{\frac{1}{3}}(2b^{2}\lambda_{2}\gamma_{2})^{\frac{4}{3}}
\end{eqnarray}
and
\begin{eqnarray}\nonumber
&&\Big(3b\sqrt{n}k_{4}+2bk_{3}\theta_{1}+b\theta_{2}\Big)|\nabla \ln s|\\\label{6et5}&&\leq \frac{5+8m}{80m}|\nabla \ln s|^{4}+\frac{3}{4}\big(\frac{20m}{5+8m}\big)^{\frac{1}{3}}\Big(3b\sqrt{n}k_{4}+2bk_{3}\theta_{1}+b\theta_{2}\Big)^{\frac{4}{3}}.
\end{eqnarray}
Substituting (\ref{2et5})-(\ref{6et5})  into (\ref{1et5}) yields
\begin{eqnarray*}\nonumber
0&\geq&\frac{1}{4mb^{2}}\mathcal{B}^{2}-\Big[ \frac{2mtc_{1}}{bR^{2}}+
a\sigma_{1}  \eta t k^{-\frac{a}{b}}+b t\eta \sigma_{1}k\lambda_{2}+\eta \\\nonumber&&+t\Big(\frac{c_{0}}{R}(n-1)(\sqrt{k_{1}}+\frac{2}{R})+\frac{3c_{1}}{R^{2}}+c_{2}k_{2} \Big)
\Big]\mathcal{B}
\\&&-(\eta t)^{2}\frac{20m}{5+8m}\Big(  2(m-1)k_{1}+
a(a+2){\tilde k}^{-\frac{a}{b}}\sigma_{1} +b^{2} \gamma_{1}k\lambda_{2}\\\nonumber&&+b^{2}\gamma_{1}k^{2}\lambda_{3} +2(1-b)k_{2} \Big)^{2}\\&&-
(\eta t)^{2}\frac{3}{4}\big(\frac{20m}{5+8m}\big)^{\frac{1}{3}}(2ab\sigma_{2})^{\frac{4}{3}}{\tilde k}^{-\frac{4a}{3b}}
-(\eta t)^{2}b^{2}\lambda_{1}\gamma_{3}\\&&
-(\eta t)^{2}\frac{3}{4}\big(\frac{20m}{5+8m}\big)^{\frac{1}{3}}(2b^{2}\lambda_{2}\gamma_{2})^{\frac{4}{3}}
-2nb^{2}(\eta t)^{2}(k_{2}+k_{3})^{2}\\&&
-(\eta t)^{2}\frac{3}{4}\big(\frac{20m}{5+8m}\big)^{\frac{1}{3}}\Big(3b\sqrt{n}k_{4}+2bk_{3}\theta_{1}+b\theta_{2}\Big)^{\frac{4}{3}},
\end{eqnarray*}
at   point $(x_{4},t_{4})$.
Set
\begin{eqnarray*}\nonumber
C_{3}= \frac{2mc_{1}}{bR^{2}}+
a\sigma_{1}  \eta  k^{-\frac{a}{b}}+b \eta \sigma_{1}k\lambda_{2} +\Big(\frac{c_{0}}{R}(n-1)(\sqrt{k_{1}}+\frac{2}{R})+\frac{3c_{1}}{R^{2}}+c_{2}k_{2} \Big)
\end{eqnarray*}
and
\begin{eqnarray*}\nonumber
E_{3}&=&\frac{20m}{5+8m}\Big(  2(m-1)k_{1}+
a(a+2){\tilde k}^{-\frac{a}{b}}\sigma_{1} +b^{2} \gamma_{1}k\lambda_{2}+b^{2}\gamma_{1}k^{2}\lambda_{3} +2(1-b)k_{2} \Big)^{2}\\&&+
\frac{3}{4}\big(\frac{20m}{5+8m}\big)^{\frac{1}{3}}(2ab\sigma_{2})^{\frac{4}{3}}{\tilde k}^{-\frac{4a}{3b}}
+b^{2}\lambda_{1}\gamma_{3}\\&&
+\frac{3}{4}\big(\frac{20m}{5+8m}\big)^{\frac{1}{3}}(2b^{2}\lambda_{2}\gamma_{2})^{\frac{4}{3}}
+2nb^{2}(k_{2}+k_{3})^{2}\\&&
+\frac{3}{4}\big(\frac{20m}{5+8m}\big)^{\frac{1}{3}}\Big(3b\sqrt{n}k_{4}+2bk_{3}\theta_{1}+b\theta_{2}\Big)^{\frac{4}{3}}.
\end{eqnarray*}
Hence,
\begin{equation*}
0\geq\frac{1}{4mb^{2}}\mathcal{B}^{2}-(\eta +tC_{3})\mathcal{B}-(\eta t)^{2}E_{3},
\end{equation*}
at   point $(x_{4},t_{4})$.
For a positive number $\tilde{a}$ and two nonnegative  numbers $\tilde{b},\tilde{c}$,  the  quadratic inequality of the form $\tilde{a}x^{2}-\tilde{b}x-\tilde{c}\leq0$ implies that $x\leq\frac{\tilde{b}}{\tilde{a}}+\sqrt{\frac{\tilde{c}}{\tilde{a}}}$, Therefore,
\begin{equation*}
\mathcal{B}\leq4mb^{2}(\eta +tC_{3})+2b\eta t\sqrt{mE_{3}},
\end{equation*}
at   point $(x_{4},t_{4})$.
Since $\mathcal{B}=\eta t S$ and  $\eta(x,T_{1})=1$, we infer
\begin{equation*}
S\leq\frac{4mb^{2}}{t}+C_{4}+2b\sqrt{mE_{3}}
\end{equation*}
at   point $(x_{4},t_{4})$ where
\begin{eqnarray*}\nonumber
C_{4}= 4mb^{2}\Big[\frac{2mc_{1}}{bR^{2}}+
a\sigma_{1}    k^{-\frac{a}{b}}+b  \sigma_{1}k\lambda_{2} +\Big(\frac{c_{0}}{R}(n-1)(\sqrt{k_{1}}+\frac{2}{R})+\frac{3c_{1}}{R^{2}}+c_{2}k_{2} \Big)\Big].
\end{eqnarray*}
Since $T_{1}$ is arbitrary, and using inequality $\sqrt{x+y}\leq\sqrt{x}+\sqrt{y}$ for $x,y\geq0$, we complete the proof.

\end{proof}
Similar to Corollary \ref{c1},  we can deduce  a global gradient estimate  from Theorem \ref{t5} by taking $R\to +\infty$ as follows.
\begin{corollary}\label{c6}
Let $(M, g(0),e^{-\phi_{0}}dv)$  be a complete noncompact weighted Riemannian manifold without boundary, and let $g(t), \phi(t)$ evolve by (\ref{e2}) for $t\in [0,T]$. Let $u$ be a positive solution to (\ref{e1}) in $M$ such that $\tilde{k}\leq u\leq k$ for some positive constants $k$ and $\tilde{k}$. Suppose that there exist constants $k_{1}, k_{2}$ such that
\begin{equation*}
Ric_{\phi}\geq -(n-1)k_{1}g,\qquad -k_{2}g\leq h\leq k_{3}g,\qquad |\nabla h|\leq k_{4}
\end{equation*}
on $M$. Then  there exist positive constants $c_{0},c_{1},$ and $c_{2}$ such that
\begin{eqnarray*}
\frac{|\nabla u|^{2}}{u^{2}}-\frac{1}{b}\frac{u_{t}}{u}-qu^{a}-p\hat{A}\leq \frac{4m}{t}+K_{5}
\end{eqnarray*}
where
\begin{eqnarray*}\nonumber
K_{5}&=& 4m\Big[
a\sigma_{1}    k^{-\frac{a}{b}}+b  \sigma_{1}k\Lambda_{2} +c_{2}k_{2} \Big]+\frac{2m}{b}\sqrt{
\frac{20}{5+8m}}\Big(  2(m-1)k_{1}\\&&+
a(a+2){\tilde k}^{-\frac{a}{b}}\Sigma_{1} +b^{2} \Gamma_{1}k\Lambda_{2}+b^{2}\Gamma_{1}k^{2}\Lambda_{3} +2(1-b)k_{2} \Big)\\&&+\frac{\sqrt{3m}}{b}
\big(\frac{20m}{5+8m}\big)^{\frac{1}{6}}(2ab\Sigma_{2})^{\frac{2}{3}}{\tilde k}^{-\frac{2a}{3b}}
+2\sqrt{m\Lambda_{1}\Gamma_{3}}\\&&+\frac{\sqrt{3m}}{b}
\big(\frac{20m}{5+8m}\big)^{\frac{1}{6}}(2b^{2}\Lambda_{5}\Gamma_{2})^{\frac{2}{3}}
+2n\sqrt{2m}(k_{2}+k_{3})\\&&+\frac{\sqrt{3m}}{b}
\big(\frac{20m}{5+8m}\big)^{\frac{1}{6}}\Big(3b\sqrt{n}k_{4}+2bk_{3}\Theta_{1}+b\Theta_{2}\Big)^{\frac{2}{3}}.
\end{eqnarray*}
\end{corollary}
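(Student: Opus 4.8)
The plan is to localize the quantity $S$ from Lemma~\ref{l5} by the same cutoff argument used in the proof of Theorem~\ref{t1}. First I would set $s=u^{-b}$, take $S=\frac{|\nabla s|^{2}}{s^{2}}+b\frac{s_{t}}{s}-b^{2}qs^{-a/b}-b^{2}p\hat A$ as in Lemma~\ref{l5}, and put $W(x,t)=tS(x,t)$. With the function $\psi$ and the cutoff $\eta(x,t)=\psi(r(x,t)/R)$ exactly as in Theorem~\ref{t1} (invoking Calabi's trick so that $\eta$ may be treated as smooth, and the generalized Laplacian comparison theorem to control $\Delta_\phi\eta$ from the lower bound on $Ric_\phi$), I fix an arbitrary $T_{1}\in(0,T]$, let $(x_{4},t_{4})$ be a maximum point of $\mathcal B:=\eta W$ on $Q_{2R,T_{1}}$, and assume $\mathcal B(x_{4},t_{4})>0$ (the estimate being trivial otherwise). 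At that point $\nabla\mathcal B=0$, $\Delta_\phi\mathcal B\le0$, $\partial_t\mathcal B\ge0$, hence $0\ge(\Delta_\phi-\partial_t)\mathcal B$.

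Next I would expand $(\Delta_\phi-\partial_t)(\eta W)=W(\Delta_\phi-\partial_t)\eta+\eta(\Delta_\phi-\partial_t)W+2\langle\nabla\eta,\nabla W\rangle$ and, using $\nabla W=-\frac{W}{\eta}\nabla\eta$ at $(x_{4},t_{4})$ together with the bounds on $\Delta_\phi\eta$, $|\nabla\eta|^{2}/\eta$, and $\eta_t$ already established for Theorem~\ref{t1}, reduce to $0\ge\eta t(\Delta_\phi-\partial_t)S-\eta S-\big(\frac{c_{0}}{R}(n-1)(\sqrt{k_{1}}+\frac{2}{R})+\frac{3c_{1}}{R^{2}}+c_{2}k_{2}\big)W$. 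I then multiply through by $t\eta$ and substitute the differential inequality (\ref{el5}) of Lemma~\ref{l5}, using $s^{-1}\Delta_\phi s=\frac{S}{b}+|\nabla\ln s|^{2}$, so that the leading positive terms become $\frac{1}{4m}\eta^{2}\big(\frac{W}{b}+t|\nabla\ln s|^{2}\big)^{2}+\frac{2}{5}(\eta t)^{2}|\nabla\ln s|^{4}$.

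The heart of the argument is the absorption bookkeeping. Expanding the square produces the genuinely good term $\frac{1}{4mb^{2}}\mathcal B^{2}$, a cross term $\frac{1}{2mb}\eta t\,\mathcal B|\nabla\ln s|^{2}$ that cancels exactly against the term $-\frac{t}{2mb}\eta\mathcal B|\nabla\ln s|^{2}$ coming from Young's inequality applied to $\frac{2(\eta t)^{2}}{b}\langle\nabla S,\nabla\ln s\rangle$, and a quartic term $\frac{1}{4m}(\eta t)^{2}|\nabla\ln s|^{4}$. Together with the $\frac{2}{5}(\eta t)^{2}|\nabla\ln s|^{4}$ this gives coefficient $\frac{5+8m}{20m}$, which I use to absorb the four lower-order quartic remainders — each estimated by Young's inequality with weight $\frac{5+8m}{80m}$ — arising from the gradient terms $a(a+2)s^{-a/b}q|\nabla\ln s|^{2}$, $b^{2}ps\hat A_{s}|\nabla\ln s|^{2}$, $b^{2}ps^{2}\hat A_{ss}|\nabla\ln s|^{2}$, $2(1-b)k_{2}|\nabla\ln s|^{2}$, from $2abs^{-a/b}\langle\nabla q,\nabla\ln s\rangle$, from $2b^{2}s\hat A_{s}\langle\nabla\ln s,\nabla p\rangle$, and from $(3b\sqrt{n}k_{4}+2bk_{3}\theta_{1}+b\theta_{2})|\nabla\ln s|$; the bounds $\tilde k\le u\le k$ convert all powers of $s$ into explicit powers of $k$ and $\tilde k$. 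What survives is a quadratic inequality $0\ge\frac{1}{4mb^{2}}\mathcal B^{2}-(\eta+tC_{3})\mathcal B-(\eta t)^{2}E_{3}$ with $C_{3},E_{3}$ the displayed constants; the elementary implication $\tilde a x^{2}-\tilde b x-\tilde c\le0\Rightarrow x\le\tilde b/\tilde a+\sqrt{\tilde c/\tilde a}$ then yields $\mathcal B\le4mb^{2}(\eta+tC_{3})+2b\eta t\sqrt{mE_{3}}$. Since $\eta(x,T_{1})=1$ whenever $d(x,x_{0},T_{1})\le R$ and $T_{1}\in(0,T]$ is arbitrary, evaluating there and using $\sqrt{x+y}\le\sqrt{x}+\sqrt{y}$ gives the asserted bound on $S$, hence on $\frac{|\nabla u|^{2}}{u^{2}}-\frac{1}{b}\frac{u_{t}}{u}-qu^{a}-p\hat A$; the case $b>1$ is identical once $(1-b)k_{2}$ is replaced by $(b-1)k_{3}$, just as in Lemma~\ref{l5}.

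I expect the only real obstacle to be the bookkeeping: one must choose the Young-inequality weights so that the four quartic remainders sum to precisely the available $\frac{5+8m}{20m}|\nabla\ln s|^{4}$, verify that the cross term in the expanded square is exactly the one produced from the $\langle\nabla S,\nabla\ln s\rangle$ estimate, and carefully track the powers of $s=u^{-b}$ against $\tilde k\le u\le k$ so that $C_{3}$ and $E_{3}$ emerge in the stated form. No new geometric ingredient beyond Lemma~\ref{l5} and the maximum-principle/comparison machinery of Theorems~\ref{t1} and \ref{t3} should be needed.
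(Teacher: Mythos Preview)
Your argument is a careful and essentially correct reproof of Theorem~\ref{t5}, but it is not yet a proof of Corollary~\ref{c6}. The corollary is the \emph{global} estimate on all of $M\times[0,T]$, with the constant $K_{5}$ expressed in terms of the global suprema $\Sigma_{i},\Gamma_{i},\Lambda_{i},\Theta_{i}$ and containing no $R$-dependent terms. What your cutoff argument actually produces, at the step ``evaluating there and using $\sqrt{x+y}\le\sqrt{x}+\sqrt{y}$ gives the asserted bound on $S$,'' is the \emph{local} estimate on $Q_{R,T}$ with the constant $K_{4}$ of Theorem~\ref{t5}: this still contains the terms $\frac{2mc_{1}}{bR^{2}}$ and $\frac{c_{0}}{R}(n-1)(\sqrt{k_{1}}+\frac{2}{R})+\frac{3c_{1}}{R^{2}}$, and it involves the local suprema $\sigma_{i},\gamma_{i},\lambda_{i},\theta_{i}$ over $Q_{2R,T}$. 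You never pass to the limit.

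The paper's own proof of the corollary is one line: apply Theorem~\ref{t5} (already established) and let $R\to+\infty$, using that $(M,g(t))$ remains complete noncompact so that $Q_{2R,T}$ exhausts $M\times[0,T]$; the $R$-dependent terms in $K_{4}$ vanish and the local suprema become the global ones, yielding $K_{5}$. So you have done far more work than required but omitted exactly the single step that distinguishes the corollary from the theorem. To complete your argument, simply append this limiting step at the end.
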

Using  the same arguments at in the proof of
 Corollary \ref{c2},   we can obtain the following Harnack inequality.
\begin{corollary}\label{c7}
With the same assumptions in Corollary \ref{c5},  for $(y_{1},s_{1})\in M\times (0,T]$ and $(y_{2},s_{2})\in M\times (0,T]$ such that $s_{1}<s_{2}$,   we have
\begin{equation*}
u(y_{1},s_{1})\leq u(y_{2},s_{2})\big( \frac{s_{2}}{s_{1}}\big)^{ 4mb}\exp\left\{ \frac{\mathcal{J}(y_{1},s_{1},y_{2},s_{2})}{4b}+(s_{2}-s_{1})b\big(  k \Sigma_{1}+ \Gamma_{1}\Lambda_{1}+ K_{5}\big)\right\}.
\end{equation*}
\end{corollary}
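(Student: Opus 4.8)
The plan is to follow verbatim the scheme used in the proof of Corollary \ref{c2}, integrating along a space-time path the global estimate furnished by Corollary \ref{c6} (which is the one producing the constant $K_{5}$). First I would rewrite that estimate in a form suited to integration: since $\frac{u_{t}}{u}=\partial_{t}\ln u$, $\frac{|\nabla u|^{2}}{u^{2}}=|\nabla\ln u|^{2}$ and $p\hat{A}=p\frac{A(u)}{u}$, Corollary \ref{c6} gives, for every $t\in(0,T]$,
\begin{equation*}
-\partial_{t}\ln u\le -b|\nabla\ln u|^{2}+b\,qu^{a}+b\,p\hat{A}+\frac{4mb}{t}+bK_{5},
\end{equation*}
after which I would absorb the potential terms with the global bounds $|q|u^{a}\le k\Sigma_{1}$ and $|p\hat{A}|\le\Gamma_{1}\Lambda_{1}$ on $M\times[0,T]$, obtaining $-\partial_{t}\ln u\le -b|\nabla\ln u|^{2}+b(k\Sigma_{1}+\Gamma_{1}\Lambda_{1}+K_{5})+\frac{4mb}{t}$.

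Next I would fix $(y_{1},s_{1}),(y_{2},s_{2})\in M\times(0,T]$ with $s_{1}<s_{2}$, take an arbitrary smooth curve $\zeta:[s_{1},s_{2}]\to M$ with $\zeta(s_{1})=y_{1}$, $\zeta(s_{2})=y_{2}$, and consider the space-time path $t\mapsto(\zeta(t),t)$. Then
\begin{equation*}
\log\frac{u(y_{1},s_{1})}{u(y_{2},s_{2})}=-\int_{s_{1}}^{s_{2}}\frac{d}{dt}\big(\ln u(\zeta(t),t)\big)\,dt=-\int_{s_{1}}^{s_{2}}\Big(\partial_{t}\ln u+\langle\nabla\ln u,\dot{\zeta}(t)\rangle\Big)\,dt.
\end{equation*}
Inserting the pointwise bound for $-\partial_{t}\ln u$ and using Cauchy--Schwarz together with the elementary inequality $-b|\nabla\ln u|^{2}+|\nabla\ln u|\,|\dot{\zeta}(t)|_{g(t)}\le\frac{1}{4b}|\dot{\zeta}(t)|_{g(t)}^{2}$ (the bound $-\tilde{a}x^{2}-\tilde{b}x\le\frac{\tilde{b}^{2}}{4\tilde{a}}$ with $\tilde{a}=b$), the integrand reduces to $\frac{4mb}{t}+b(k\Sigma_{1}+\Gamma_{1}\Lambda_{1}+K_{5})+\frac{1}{4b}|\dot{\zeta}(t)|_{g(t)}^{2}$. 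Integrating, $\int_{s_{1}}^{s_{2}}\frac{4mb}{t}\,dt=4mb\ln\frac{s_{2}}{s_{1}}$ and the constant term contributes $(s_{2}-s_{1})b(k\Sigma_{1}+\Gamma_{1}\Lambda_{1}+K_{5})$, so
\begin{equation*}
\log\frac{u(y_{1},s_{1})}{u(y_{2},s_{2})}\le 4mb\ln\frac{s_{2}}{s_{1}}+(s_{2}-s_{1})b\big(k\Sigma_{1}+\Gamma_{1}\Lambda_{1}+K_{5}\big)+\frac{1}{4b}\int_{s_{1}}^{s_{2}}|\dot{\zeta}(t)|_{g(t)}^{2}\,dt.
\end{equation*}

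Finally, taking the infimum over all admissible curves $\zeta$ turns the last integral into $\mathcal{J}(y_{1},s_{1},y_{2},s_{2})$, and exponentiation gives exactly the asserted inequality. This is essentially a bookkeeping argument; the only points needing care are that the global estimate holds only for strictly positive time (so the hypothesis $s_{1}>0$ is genuinely used to keep $\int_{s_{1}}^{s_{2}}\frac{dt}{t}$ finite), that the supremum constants $\Sigma_{1},\Gamma_{1},\Lambda_{1}$ are the ones over the full slab $M\times[0,T]$ rather than over a compact subcylinder, and that $\mathcal{J}$ is computed with the time-dependent metrics $g(t)$, which is precisely the norm appearing when one estimates $\langle\nabla\ln u,\dot{\zeta}(t)\rangle\le|\nabla\ln u|\,|\dot{\zeta}(t)|_{g(t)}$. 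I do not expect any serious obstacle beyond keeping the constants consistent with those in Corollary \ref{c6}.
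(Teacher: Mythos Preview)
Your proposal is correct and follows exactly the approach the paper indicates: the paper does not give a separate proof of Corollary~\ref{c7} but simply states that one repeats the argument of Corollary~\ref{c2}, which is precisely what you do (rewrite the global estimate of Corollary~\ref{c6} for $-\partial_t\ln u$, integrate along a space-time curve, apply $-\tilde a x^{2}-\tilde b x\le \tilde b^{2}/4\tilde a$, take the infimum, and exponentiate). Your version is in fact slightly tidier than the paper's template proof, since you work with an arbitrary admissible curve from the start and only pass to the infimum at the end.
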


\end{document}